\DeclareMathOperator{\Hom}{Hom}
\DeclareMathOperator{\Trd}{Trd}
\DeclareMathOperator{\Br}{Br}
\DeclareMathOperator{\Inv}{Inv}
\DeclareMathOperator{\Id}{Id}
\DeclareMathOperator{\Quad}{Quad}
\DeclareMathOperator{\Herm}{Herm}
\DeclareMathOperator{\gr}{gr}
\newcommand{\isom}{\stackrel{\sim}{\rightarrow}}
\newcommand{\Z}{\mathbb{Z}}
\newcommand{\N}{\mathbb{N}}
\newcommand{\pfis}[1]{\langle\!\langle #1\rangle\!\rangle}
\newcommand{\To}{\longrightarrow}
\newcommand{\fdiag}[1]{\langle #1\rangle}
\newcommand{\tld}{\widetilde}
\newcommand{\eps}{\varepsilon}
\newcommand{\pgq}{\geqslant}
\newcommand{\ppq}{\leqslant}
\newcommand{\filt}[1][\N]{\mathbf{Ab}_{#1-\text{filt}}}
\newcommand{\grad}[1][\N]{\mathbf{Ab}_{#1-\text{grad}}}
\renewcommand{\phi}{\varphi}
\renewcommand{\bar}{\overline}
\renewcommand{\hat}{\widehat}
\newcommand{\foncdef}[5]{\begin{array}{rrcl}
#1 : & #2 & \To & #3 \\
 & #4 & \longmapsto & #5
\end{array}}
\newcommand{\anonfoncdef}[4]{\begin{array}{rcl}
#1 & \To & #2 \\
#3 & \longmapsto & #4
\end{array}}
\newtheorem{thm}{Theorem}[section]
\newtheorem{prop}[thm]{Proposition}
\newtheorem{coro}[thm]{Corollary}
\newtheorem{lem}[thm]{Lemma}
\newtheorem{defi}[thm]{Definition}
\theoremstyle{definition}
\newtheorem{rem}[thm]{Remark}
\newtheorem{ex}[thm]{Example}
\author{Nicolas Garrel}
\title{Even Stiefel-Whitney invariants for anti-hermitian quaternionic forms}
\date{}
\begin{document}

\maketitle

\section*{Introduction}

In \cite{GMS}, Serre defines the notion of cohomological invariant of an algebraic group.
More generally, if $K$ is a base field, $\mathbf{Field}_{/K}$ is the category of fields over $K$,
and given functors $F: \mathbf{Field}_{/K}\to \mathbf{Set}$ and $A: \mathbf{Field}_{/K}\to
\mathbf{Ab}$, an invariant of $F$ with values in $A$ is simply a natural transformation
from $F$ to $A$, where $A$ is seen as a set-valued invariant by forgetting the group structure.
In other words, if $\alpha\in \Inv(F,A)$ is such an invariant, then for any extension $L/K$
and any $x\in F(L)$, it yields an element $\alpha(L)\in A(L)$, and this is compatible with
scalar extensions in the sense that if $E/L$ is an over-extension then $\alpha(x_E)=
\alpha(x)_E\in A(E)$. We always assume in this article that the base field has characteristic
different from $2$.

We speak of invariants of an algebraic group $G$ over $K$ when $F(L)$ is the cohomology set
$H^1(L,G)$, which can be identified with the set of isomorphism classes of $G_L$-torsors,
and we speak of cohomological invariants when $A(L)=H^d(L,C)$ where $C$ is some Galois
module defined over $K$ (we then say that the cohomological invariants have coefficients
in $C$). We also consider Witt invariants, which correspond to $A(L)=W(L)$, the Witt group of $L$.
When $G$ is a classical group, the corresponding functor $F$ usually has some natural algebraic
reformulation in terms of bilinear forms or algebras with involution (see \cite{GMS} and \cite{BOI}).

When $G=O(A,\sigma)$ where $(A,\sigma)$ is a central simple algebra with orthogonal involution
over $K$, then $H^1(L,G)$ can be identified with the set of isometry classes of hermitian forms
over $(A,\sigma)$ of reduced dimension $n=\deg(A)$ (\cite[29.26]{BOI}). Up to isomorphism,
this functor only depends on $n$ and the Brauer class $[A]\in \Br(K)$. In particular,
if $G=O(V,q)$ where $(V,q)$ is a non-degenerate quadratic space of dimension $n$, this functor
can be identified with $\Quad_n$, the functor of isometry classes of $n$-dimensional non-degenerate
quadratic forms. In this case, Serre gave in \cite{GMS} a complete description of Witt and
cohomological invariants: the Witt invariants are combinations of $\lambda$-operations $\lambda^d$,
and the cohomological invariants are combinations of Stiefel-Whitney invariants $w_d$.

When $A$ is not split, the cohomological invariants of $O(A,\sigma)$ are only known
in cohomological degree up to $3$ (see \cite{BOI} for degree $1$ and $2$, and \cite{Mer}
for degree $3$). When the index of $A$ is $2$, the problem amounts to finding cohomological
invariants of skew-hermitian forms of reduced dimension $n$ (for some $n$) over some quaternion
algebra $Q$ endowed with its canonical symplectic involution $\gamma$. Some progress was
made in \cite{Ber}, which uses descent methods from the generic splitting field of $Q$ (the function
field of its Sever-Brauer variety, which is somewhat understood as it is the function field
of a conic) to extend Stiefel-Whitney invariants to such hermitian forms. Unfortunately,
this only yields invariants in $H^d(K,\mu_4)/[Q]\cdot H^{d-2}(K,\mu_2)$ instead of $H^d(K,\mu_2)$,
and more importantly the argument is flawed, and actually only works for an invariant
which is well-defined on similarity classes instead of isometry classes (what we call here
an \emph{even} invariant), in particular for what we call here the even Stiefel-Whitney
invariants.

Our main result (Theorem \ref{thm_inv}) is that actually all cohomological invariants of
similarity classes of quadratic forms (otherwise described as \emph{even} invariants) 
do extend to invariants of skew-hermitian quaternionic
forms (not with values in some quotient). Our method is to lift cohomological invariants
to Witt invariants, and use the fact that $\lambda$-operations are also defined for $\eps$-hermitian
forms over algebras with involutions (\cite{G2}) to extend those Witt invariants to such
hermitian forms. We discuss this general method in Section \ref{sec_ext}. 

Since our method
involves lifting invariants with values in a graded ring (mod $2$ Galois cohomology) to a 
filtered ring (the Witt ring), we develop in Section \ref{sec_filt} general tools to
work in this setting.

In Section \ref{sec_split} we study the combinatorics of Witt and cohomological invariants
of families of quadratic forms, as well as the behaviour of even and odd invariants. 
Our point of view is that diagonalizing a hermitian form
amounts to describing it in terms of a family of smaller-dimensional forms. In the case
of quadratic forms, those smaller forms are of dimension 1, and then all basic invariants
are easy to describe combinatorially. When working over a non-split algebra, the smallest
possible dimension for those forms in the decomposition is given by the index of the algebra,
and even for quadratic forms the combinatorics involved to describe the basic invariants
in terms of forms of dimension higher than $1$ become more complicated, and that is what
we explain here. As our main results are for quaternion algebras, the case of a decomposition
as a sum of $2$-dimensional forms is of special interest to us, and we give a complete
description in that case, which paves the way for the extension of those invariants in the next
section.

Finally, in Section \ref{sec_quater}, which is the heart of the article, we prove our main
results on anti-hermitian quaternionic forms. We apply our method of lifting cohomological
invariants to Witt invariants and extending those using our $\lambda$-operations, and we
check that the combinatorial behaviour that was observed in Section \ref{sec_m_2} carries
mostly over for quaternionic forms. What we observe is that a straightforward extension
of the Witt invariant decomposes as a constant part (which becomes trivial over any
splitting extension) and a part which is the direct analogue as the invariant in the
split case. We can then subtract the constant part and take the quotient to cohomology
to get our extended cohomological invariant.

\section{Invariants in a filtered group}\label{sec_filt}

In this section we study a few generalities regarding invariants with values in $\N$-filtered groups.
It would be possible to include more general filtering sets, but at the cost of a few technicalities,
and we only need the case of $\N$ for our application.

\subsection{Filtered and graded functors}

To set up notations, if $A$ is an $\N$-filtered group we write $A_{\pgq n}\subset A$ for
the corresponding subgroup for each $n\in \N$, with
$A_{\pgq n+1}\subset A_{\pgq n}$ as the notation suggests (we also always assume that
$A_{\pgq 0}=A$), and then the induced $\N$-graded group is $\gr(A)=\bigoplus A_n$ with
$A_n=A_{\pgq n}/A_{\pgq n+1}$. Any $\N$-graded group $B=\bigoplus B_n$ is also naturally filtered,
with $B_{\pgq n}=\bigoplus_{m\pgq n} B_m$, and in that case $\gr(B)\simeq B$ canonically.

We define $\filt$ to be the category of $\N$-filtered abelian group, and $\grad$ to be the
category of $\N$-graded abelian groups. Of course a morphism of filtered groups is a group
morphism $f: A\to B$ such that $f(A_{\pgq n})\subset B_{\pgq n}$ for all $n\in \N$.
The previous construction $A\mapsto \gr(A)$ defines a functor $\gr: \filt\to \grad$ in a clear way,
and the canonical filtering of a graded group defines a functor $\grad\to \filt$ such
that the composition is isomorphic to the identity of $\grad$.
% Furthermore, there is an obvious functor $\pi_n: \grad \to \mathbf{Ab}$ for each $n\in \N$
% sending a $\N$-graded group to its $n$-component.

Let $F: \mathbf{Field}_{/K}\to \mathbf{Set}$ and $A: \mathbf{Field}_{/K}\to \filt$ be two functors.
There are obvious subfunctors $A_{\pgq n}\subset A$ for each $n\in \N$.
We write $B = \gr\circ A: \mathbf{Field}_{/K}\to \grad$. Since $B(L)$ is by definition
graded for each $L/K$, we get groups $B_n(L)$, which define functors
$B_n: \mathbf{Field}_{/K}\to \mathbf{Ab}$; by definition, for any extension $L/K$,
we simply have $B(L)=\bigoplus_{n\in \N} B_n(L)$, and $B_n(L)=A_{\pgq n}(L)/A_{\pgq n+1}(L)$.
Composing $B$ with the canonical $\grad\to \filt$, we may also see $B$ as a functor
to filtered groups, which just amounts to taking $B_{\pgq n}(L)=\bigoplus_{m\pgq n} B_m(L)$.

\begin{ex}
  If $A(L)=W(L)$ is the Witt group of the field $L$ (with the obvious functor structure),
  endowed with its fundamental filtration $W_{\pgq n}(L)=I^n(L)$, then $B_n(L)=H^n(L,\mu_2)$
  is canonically the mod $2$ Galois cohomology of $L$ (this is essentially a reformulation
  of the Milnor conjecture, see \cite{Kah} for instance for an exposition of this topic).
\end{ex}

\subsection{Invariants}\label{sec_inv_filt}

Then we are interested in the groups of invariants $M = \Inv(F,A)$ and $N=\Inv(F,B)$
of $F$ with values in $A$ and $B$, that is natural transformations $F \to A$ and
$F\to B$, where we see $A$ and $B$ as functors to $\mathbf{Set}$ by forgetting the
filtered/graded group structure. Clearly, $M$ has a canonical structure of abelian
group given by pointwise addition, and even a structure of $\N$-filtered group if we define
$M_{\pgq n}$ to be the image of the natural map $\Inv(F,A_{\pgq n})\to \Inv(F,A)$. In fact,
$M_{\pgq n}$ is nothing but the subgroup of invariants $\alpha\in M$ such that
for all $L/K$ and all $x\in F(L)$, $\alpha_L(x)\in A_{\pgq n}(L)\subset A(L)$.
The same analysis goes for $N$, seeing $B$ as a functor to $\filt$.

Note that even though the filtered structure on $B$ induces a filtered structure
on $N$, this does not work the same way with the graded structure: $N$ is \emph{not}
graded in a natural way. In fact, $\gr(N)$ is $\bar{N} = \bigoplus_{n\in \N} N_n$,
where of course $N_n=\Inv(F,B_n)$, and the natural inclusion $\bar{N}\subset N$
is in general strict. The invariants in $N$ are \emph{locally} of bounded degree:
if $\alpha\in N$ then, for any $x\in F(L)$, there is $m\in \N$ depending
on $x$ such that $\alpha_L(x)$ is a combination of elements of $B_n(L)$ for $n\ppq m$.
Then the invariants in $\bar{N}$ are those with \emph{globally} bounded degree.
Also note that in general $\bar{N}$ is \emph{not} the graded group induced by $M$,
as the canonical map $M_{\pgq n}\to N_n$ has no reason to be surjective
(but its kernel is indeed $M_{\pgq n+1}$, so there is an injective morphism
of graded groups $\gr(M)\to \gr(N)=\bar{N}$). In fact:

\begin{defi}
  We say that an invariant $\alpha\in N_n$ is liftable if it lies in
  the image of $M_{\pgq n}\to N_n$. The subgroup of liftable invariants is denoted
  by $\tld{N}_n$, and we set $\tld{N}=\bigoplus_{n\in \N}\tld{N}_n\subset \bar{N}$.
\end{defi}

Then by definition $\tld{N}_n\simeq M_{\pgq n}/M_{\pgq n+1}$, and $\tld{N}$ is
canonically the graded group induced by $M$. To summarize:
\[ \tld{N} = \gr(M) \subset \bar{N} = \gr(N) \subset N. \]

\begin{ex}
  If $A=W$ is the Witt group functor, and $F=\Quad_r$ (so $F(L)$ is the set of isometry
  classes of non-degenerate quadratic forms of dimension $r$), then all invariants are liftable
  (see Proposition \ref{prop_ic_quad}). It is also the case when $F=I^d$ (see \cite{G3}).
  This means that in those cases $\bar{N}=\tld{N}$.
\end{ex}

\section{Invariants of split hermitian forms}\label{sec_split}

For any functor $F:\mathbf{Field}_{/K}\to \mathbf{Set}$, let us
write $IW(F)$ and $IC(F)$ respectively for the Witt and cohomological invariants
of $F$, that is $IW(F)=\Inv(F,W)$ and $IC(F)=\Inv(F,h^*)$. We also
write $IW^{\pgq d}(F)=\Inv(F,I^d)$ and $IC^d(F)=\Inv(F, h^*)$ (recall
from Section \ref{sec_inv_filt} that this defines a filtering of $IW(F)$, but not exactly
a grading of $IC(F)$).

Let $(A,\sigma)$ be an algebra with involution, and let
$\Herm_n^\eps(A,\sigma): \mathbf{Field}_{/K}\to \mathbf{Set}$ be the functor of
isometry classes of $\eps$-hermitian forms of reduced dimension $n$
over $(A,\sigma)$. Ideally we would like to be able to describe
$IW(\Herm_n^\eps(A,\sigma))$ and $IC(\Herm_n^\eps(A,\sigma))$. In this article
we provide a certain program to tackle this problem, and give significant
advances when $A$ is a quaternion algebra.

But first we take a look at the simplest case: when $A$ is split. We then call
$\eps$-hermitian forms over $(A,\sigma)$ \emph{split hermitian forms}.
When the adjoint involutions of $\eps$-hermitian forms over $(A,\sigma)$ are
symplectic (which happens if $\sigma$ is symplectic and $\eps=1$, or if $\sigma$
is orthogonal and $\eps=-1$), then up to isometry there is only one $\eps$-hermitian
form of reduced dimension $n$ over $(A_L,\sigma_L)$ for all $L/K$, so the
only invariants are constant and the problem is very uninteresting (in contrast,
when $A$ is not split, $\Herm_n^\eps(A,\sigma)$ can be very interesting and complicated).
So for the rest of this section we always assume that either $\sigma$ is orthognal
and $\eps=1$, or $\sigma$ is symplectic and $\eps=-1$.

In that case, choosing a hermitian Morita equivalence between $(A,\sigma)$
and $(K,\Id)$ gives a non-canonical isomorphism of funtors
$\Herm_n^\eps(A,\sigma)\approx \Quad_n$. Choosing a different Morita
equivalence yields the same isomorphism up to multiplying all quadratic
forms by $\fdiag{\lambda}$ for some (uniformly chosen) $\lambda\in K^\times$.
Therefore we get a canonical map
\[ \Herm_n^\eps(A,\sigma) \to \Quad_n/\sim \]
where $\Quad_n/\sim$ is the functor of similarity classes of $n$-dimensional
quadratic forms. This factorizes as
\[ \Herm_n^\eps(A,\sigma) \to \Herm_n^\eps(A,\sigma)/\sim \isom \Quad_n/\sim \]
where again we took similarity classes of hermitian forms.

The non-canonical correspondence with quadratic forms shows that the
invariants of $\Herm_n^\eps(A,\sigma)$ are the same as those of $\Quad_n$, which
are well-understood, but it turns out that because this correspondence is
not canonical, we can't hope to extend all invariants of $\Quad_n$ in the
case where $A$ is not split (there are precise ramification arguments that show
that it's not possible). On the other hand, there is hope for the invariants
of $\Quad_n/\sim$ (and for Witt invariants at least we can show that they indeed
extend).

We know how to describe invariants of $\Quad_n/\sim$, but the combinatorics
involved use the fact that forms in $\Quad_n$ can be diagonalized, in other words
can be decomposed as sums of $1$-dimensional forms. When the index of $A$ is
$m\in \N^*$, we can decompose forms in $\Herm_n^\eps(A,\sigma)$ as sums of
forms of reduced dimension $m$: if $r\in \N^*$ and $X$ is a finite set with
$r$ elements, there is a natural surjective map
\[ \left( \Herm_m^\eps(A,\sigma) \right)^X \to \Herm_{rm}^\eps(A,\sigma) \]
given by $(h_i)_{i\in X}\mapsto \sum_{i\in X}h_i$,
which also induces
\[ \left( \Herm_m^\eps(A,\sigma) \right)^X/\sim \to \Herm_{rm}^\eps(A,\sigma)/\sim \]
where on the left the quotient means that we identify a family $(h_1,\dots,h_r)$
in $\left( \Herm_m^\eps(A,\sigma) \right)^r(L)$ with $(h'_1,\dots,h'_r)$ if there
exists a $\lambda\in L^\times$ independent of $i$ such that $h'_i=\fdiag{\lambda}h_i$.

Thus when $A$ is split we get a commutative diagram:
\[ \begin{tikzcd}
  \Quad_1^X \rar \dar & \Quad_r \dar \\
  \Quad_1^X/\sim \rar & \Quad_r/\sim.
\end{tikzcd} \]

In this section we describe invariants of those functors, and we try to use
a combinatorial setting that does not use diagonalizations of the $m$-dimensional
forms.

\subsection{A coordinate-free approach to Pfister forms}

In this article we promote a slightly different (but strictly equivalent) way to
view and define Pfister forms, which amounts to doing linear algebra without
always choosing a basis.

Traditionally, given elements $a_1,\dots,a_n\in K^\times$, the $n$-fold Pfister form
$\pfis{a_1,\dots,a_n}$ is defined as
\[ \pfis{a_1,\dots,a_n} = \fdiag{1,-a_1}\cdot \fdiag{1,-a_2}\cdots \fdiag{1,-a_n}. \]
The minus signs have no influence on the definition of the \emph{class} of $n$-fold
Pfister forms (and in early texts they were not present), but they are convenient in
the notation when working with quaternion algebras or Galois cohomology.

As a general notation, if $X$ is a set and $(a_i)_{i\in X}$ is a family of elements of
$K^\times$, then for any finite subset $I\subset X$ we write
\[ \fdiag{a_i}_{i\in I} = \sum_{i\in I} \fdiag{a_i} \]
as well as
\[ a_I=\prod_{i\in I}a_i \]
and
\[ \pfis{a_i}_{i\in I} = \pfis{a_{i_1},\dots,a_{i_r}} \]
if $I=\{i_1,\dots,i_r\}$.
We use those notations more generally if the $a_i$ are in $K^\times/(K^\times)^2$.

Let $X$ be a finite set, and $(a_i)_{i\in X}$ be a family of elements
in $K^\times$. By definition,
\[ \pfis{a_i}_{i\in X} = \fdiag{(-1)^{|I|}a_I}_{I\in \mathcal{P}(X)}. \]
Then notice that in $K^\times/(K^\times)^2$:
\[ (-1)^{|I|}a_I \cdot (-1)^{|J|}a_J = (-1)^{|I\Delta J|}a_{I\Delta J} \]
where $I\Delta J$ is the symmetric difference of $I$ and $J$, which
is nothing more than the addition for the standard $\mathbb{F}_2$-vector
space structure on $\mathcal{P}(X)$ (notably, it is the addition when viewed
in terms of characteristic functions $X\to \{0,1\}\simeq \Z/2\Z$).

This motivates the following:

\begin{defi}\label{def_pfis_1}
  Let $V$ be a finite-dimensional $\mathbb{F}_2$-vector space, and
  let $f: V\to K^\times/(K^\times)^2$ be a group morphism (equivalently,
  a linear map). For any affine subspace $W\subset V$, we set
  \[ \pfis{f|W} = \fdiag{f(x)}_{x\in W} \in GW(K). \]
\end{defi}

We choose to use the terminology of $\mathbb{F}_2$-vector spaces rather
than $2$-torsion groups because of the use of affine subspaces, which
do not have a clear equivalent in the theory of abelian groups.

Our analysis preceding the definition shows that if $W$ is a vector subspace
$\pfis{f|W}$ is an $n$-fold Pfister form where $n=\dim(W)$, and
when $W$ is any $n$-dimensional affine subspace $\pfis{f|W}$ is
an $n$-fold general Pfister form (recall that a general Pfister form
is any form that can be written $\fdiag{a}\phi$ with $\phi$ a Pfister form).
Explicitly, suppose $W=w+W_0$ with $W_0\subset $ a vector subspace,
$(e_i)_{i\in X}$ is an $\mathbb{F}_2$-basis of $W_0$, $a\in K^\times$ is
a representative of $f(w)$, and $-a_i$ a representative of $f(e_i)$
for each $i\in X$. Then
\[ \pfis{f|W} = \fdiag{a}\pfis{a_i}_{i\in X}. \]

We actually need a slightly more general case:

\begin{defi}\label{def_pfis_2}
  Let $q$ be a quadratic form over $K$, with similarity factors group $G(q)\subset K^\times$.
  Let $V$ be a finite-dimensional $\mathbb{F}_2$-vector space, and
  let $f: V\to K^\times/G(q)$ be a group morphism. Then for any affine
  subspace $W\subset V$ we define
  \[ \pfis{f|W}q = \sum_{x\in W}\fdiag{f(x)}q \in GW(K). \]
  If $q=\pfis{f'|W'}$ for some $f':V'\to K^\times/(K^\times)^2$ as in
  Definition \ref{def_pfis_1}, we write $\pfis{f|W ; f'|W'}$.
\end{defi}

If $a\equiv b\mod G(q)$ with $a,b\in K^\times$, then by definition of $G(q)$ we have
$\fdiag{a}q=\fdiag{b}q$ in $GW(K)$, so $\fdiag{a}q$ is well-defined for $a\in K^\times/G(q)$,
and thus $\pfis{f|W}q$ is well-defined. Note that when $q=\fdiag{1}$ we recover Definition
\ref{def_pfis_1}.

\begin{prop}
  With the notations of Definition \ref{def_pfis_2}, if $\dim(W)=n$
  and $q$ is a general $r$-fold Pfister form, then $\pfis{f|W}q$ is
  a general $(r+n)$-fold Pfister form.

  If moreover $W$ is a vector subspace and $q$ is a Pfister form,
  then $\pfis{f|W}q$ is a Pfister form.
\end{prop}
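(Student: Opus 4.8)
The plan is to reduce everything to the classical statement that a product of general Pfister forms is a general Pfister form, using the explicit description given just after Definition~\ref{def_pfis_1}. First I would pick a vector subspace $W_0\subset V$ and a vector $w\in V$ with $W=w+W_0$, choose an $\mathbb{F}_2$-basis $(e_i)_{i\in X}$ of $W_0$ (so $|X|=n$), and lift $f$ by choosing representatives: let $b\in K^\times$ represent $f(w)\bmod G(q)$ and, for each $i\in X$, let $-a_i\in K^\times$ represent $f(e_i)\bmod G(q)$. The key computation is then that, exactly as in the scalar case, for any $I\subset X$ the element $x_I:=w+\sum_{i\in I}e_i$ of $W$ satisfies $f(x_I)\equiv (-1)^{|I|}b\,a_I \pmod{G(q)}$, because $(-1)^{|I|}a_I\cdot(-1)^{|J|}a_J\equiv (-1)^{|I\Delta J|}a_{I\Delta J}$ modulo squares, hence modulo $G(q)$; here one uses that $G(q)\supseteq (K^\times)^2$ and that $\fdiag{c}q$ depends only on the class of $c$ modulo $G(q)$. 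Summing over $W$ gives
\[ \pfis{f|W}q = \sum_{I\subset X} \fdiag{(-1)^{|I|} b\, a_I} q = \fdiag{b}\,\pfis{a_i}_{i\in X}\, q, \]
which is $\fdiag{b}$ times a product of $q$ with the $n$-fold Pfister form $\pfis{a_i}_{i\in X}$.

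Next I would invoke the classical fact that if $q=\fdiag{c}\rho$ is a general $r$-fold Pfister form (with $\rho$ an $r$-fold Pfister form) then $\pfis{a_i}_{i\in X}\,q = \fdiag{c}\big(\pfis{a_i}_{i\in X}\otimes\rho\big)$, and that a tensor product of an $n$-fold and an $r$-fold Pfister form is an $(r+n)$-fold Pfister form. Therefore $\pfis{f|W}q = \fdiag{bc}\big(\pfis{a_i}_{i\in X}\otimes\rho\big)$ is a general $(r+n)$-fold Pfister form, proving the first assertion. For the second assertion, assume $W$ is a linear subspace; then we may take $w=0$, so $b$ can be chosen to be $1$, and if moreover $q$ is a Pfister form we may take $c=1$, giving $\pfis{f|W}q = \pfis{a_i}_{i\in X}\otimes\rho$ on the nose, a genuine $(r+n)$-fold Pfister form.

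The only genuinely delicate point — and the step I would be most careful about — is the well-definedness underlying the first displayed equation: one must check that the sum $\sum_{x\in W}\fdiag{f(x)}q$ in $GW(K)$ really does collapse to $\fdiag{b}\pfis{a_i}_{i\in X}q$ regardless of the choices of representatives $b$ and $-a_i$, which is exactly the content of the remark that $\fdiag{c}q$ is well-defined for $c\in K^\times/G(q)$ (already noted in the text) together with the $\bmod$-$G(q)$ identity for $(-1)^{|I|}a_I$. Once that bookkeeping is in place everything else is the standard Pfister-form algebra, so there is no real obstacle beyond this; I would simply remark that the case $q=\fdiag{1}$ recovers the statement following Definition~\ref{def_pfis_1} and that Definition~\ref{def_pfis_2}'s notation $\pfis{f|W;f'|W'}$ is consistent with this when $q$ is itself presented as $\pfis{f'|W'}$.
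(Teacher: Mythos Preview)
Your proof is correct and follows essentially the same approach as the paper: write $W=w+W_0$, factor out the scalar contributions from the basepoint $w$ and from the general Pfister factor of $q$, and reduce to the case where $W$ is linear and $q$ is Pfister, which is exactly the explicit description following Definition~\ref{def_pfis_1}. The paper organizes this slightly more concisely by writing $q=\fdiag{a}\phi$ and observing $\pfis{f|W}q=\fdiag{af(w)}\pfis{f|W_0}\phi$ in one line (using $G(q)=G(\phi)$), thereby reducing the first assertion to the second, whereas you carry out the full expansion with explicit representatives $b,-a_i$; but the underlying argument is identical.
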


\begin{proof}
  Suppose $q=\fdiag{a}\phi$ for some $a\in K^\times$ and $\phi$ an $r$-fold Pfister form,
  and let us write $W=w+W_0$ where $W_0$ is a vector space. Then, recalling that
  $G(q)=G(\phi)$:
  \[ \pfis{f|W}q = \sum_{x\in W_0}\fdiag{f(w)\cdot f(x)}\cdot \fdiag{a}\phi =
    \fdiag{af(w)}\pfis{f|W_0}\phi \]
  so it is enough to prove the second statement, which follows from the same
  analysis as we did after Definition \ref{def_pfis_1}.
\end{proof}

\begin{rem}
  The proof shows that for any $q$, there exists a (possibly general) Pfister form $\phi$
  such that $\pfis{f|W}q=\phi q$, but this $\phi$ is not well-defined and depends
  on a choice of basis and a choice of representatives, while $\pfis{f|W}q$ does not
  depend on any choice.
\end{rem}

\begin{ex}\label{ex_even_part_pfister}
  Let $\phi=\pfis{a_i}_{i\in X}$ be an $n$-fold Pfister form.
  As we saw, we can naturally write it as $\phi=\pfis{f|\mathcal{P}(X)}$
  by setting $f(\{i\})=-a_i$. Then $\mathcal{P}_0(X)$ is a hyperplane
  of $\mathcal{P}(X)$, so we get an $(n-1)$-fold Pfister form
  $\pfis{f|\mathcal{P}_0(X)}$, which we call the even part of $\phi$.
  (Technically this does not depend simply on $\phi$ but also on
  its representation as $\pfis{a_i}_{i\in X}$.)

  If $X=\{1,\dots,n\}$, we can write it as for instance
  $\pfis{a_1a_2,a_1a_3,\dots,a_1a_n}$, or $\pfis{a_1a_2,a_2a_3,\dots,a_{n-1}a_n}$.
  The various ways to represent it as a Pfister form with the usual notation,
  correspond to different choices of basis of $\mathcal{P}_0(X)$, and there is
  no obvious natural choice.
\end{ex}

\subsection{Even and odd invariants}

We now describe a general framework to handle invariants of similarity classes.
It has some overlap with \cite{G3}, but the formalism used there is a little
heavy-handed for our purposes, so we give a self-contained account.

Let $F:\mathbf{Field}_{/K}\to \mathbf{Set}$ be a functor.
The square classes functor $G:L\mapsto L^\times/(L^\times)^2$ is a group functor,
and an $G$-action on $F$ is just an action of $G(L)$ on $F(L)$ for every $L$, which
is compatible in the obvious way with scalar extensions. In that case we can consider
the quotient functor $F/G$ (such that $(F/G)(L)=F(L)/G(L)$ is the usual set-theoretic
quotient for the given action).

\begin{ex}
  Or course our main examples are $F=\Quad_m$, or more generally
  $(\Quad_r)^X$, as well as $\Herm_n^\eps(A,\sigma)$, each time with
  the natural action through multiplication by a $1$-dimensional form.
  The quotients are then the corresponding similarity classes functors.
\end{ex}

For the rest of this section we fix some $F:\mathbf{Field}_{/K}\to \mathbf{Set}$
with a $G$-action. We also write $h^*(L)=H^n(L,\mu_2)$ for any extension $L/K$.

\begin{lem}\label{lem_inv_even}
  The canonical map $IW(F/G)\to IW(F)$ (resp. $IC(F/G)\to IC(F)$) is an injective
  morphism of $W(K)$-algebras (resp. $h^*(K)$-algebras), and its image is the
  subalgebra of invariants $\alpha$ such that for all $L/K$, $x\in F(L)$ and
  $\lambda\in G(L)$, we have $\alpha(\lambda\cdot x)=\alpha(x)$.
\end{lem}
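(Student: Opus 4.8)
The statement is essentially a universal-property computation, so the plan is to unwind the definitions of $IW(F/G)$ and $IC(F/G)$ and check the three claims (morphism of algebras, injectivity, characterization of the image) one at a time. First I would note that the quotient map $\pi_L: F(L)\to (F/G)(L)$ is a natural transformation of $\mathrm{Set}$-valued functors, so precomposition $\alpha\mapsto \alpha\circ\pi$ sends a natural transformation $F/G\to W$ (resp. $F/G\to H^*(\bullet,\mu_2)$) to a natural transformation $F\to W$ (resp. to $H^*$); that this precomposition respects pointwise addition and pointwise multiplication by $W(K)$ (resp. $h^*(K)$) is immediate since all operations on the target are defined valuewise. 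So the map is a morphism of $W(K)$-algebras (resp. $h^*(K)$-algebras) with essentially no content.

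For injectivity, suppose $\alpha: F/G\to A$ (with $A=W$ or $A=H^*(\bullet,\mu_2)$) satisfies $\alpha\circ\pi = 0$. Since each $\pi_L$ is surjective — this is where I would invoke that the action-quotient functor $F/G$ is, by construction, the \emph{set-theoretic} quotient, so $\pi_L$ hits every class — for any $\bar x\in (F/G)(L)$ we may pick $x\in F(L)$ with $\pi_L(x)=\bar x$, and then $\alpha_L(\bar x) = (\alpha\circ\pi)_L(x) = 0$. Hence $\alpha = 0$, proving injectivity.

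For the description of the image, I would argue both inclusions. If $\alpha = \beta\circ\pi$ for some $\beta\in IW(F/G)$ (resp. $IC(F/G)$), then for $x\in F(L)$ and $\lambda\in G(L)$ we have $\pi_L(\lambda\cdot x) = \pi_L(x)$ by definition of the quotient action, so $\alpha_L(\lambda\cdot x) = \beta_L(\pi_L(x)) = \alpha_L(x)$; thus $\alpha$ is $G$-invariant in the stated sense. Conversely, suppose $\alpha\in IW(F)$ (resp. $IC(F)$) satisfies $\alpha_L(\lambda\cdot x)=\alpha_L(x)$ for all $L$, $x$, $\lambda$. Then for each $L$ the map $\alpha_L: F(L)\to A(L)$ is constant on $G(L)$-orbits, so it factors (uniquely, as a map of sets) through a map $\beta_L: (F/G)(L)\to A(L)$; I would check that the family $(\beta_L)$ is natural — for an extension $E/L$, naturality of $\alpha$ together with compatibility of $\pi$ with scalar extension and surjectivity of $\pi_L$ forces the required square to commute — and that each $\beta_L$ is a group homomorphism, which again follows from surjectivity of $\pi_L$ and the fact that $\alpha_L$ lands in a group (indeed $\beta_L(\bar x + \bar y)$, computed via lifts, equals $\alpha_L(x)+\alpha_L(y)$, independently of lifts). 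Hence $\beta\in IW(F/G)$ (resp. $IC(F/G)$) and $\alpha = \beta\circ\pi$ lies in the image.

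The only point requiring any care — the ``main obstacle'', such as it is — is the naturality and well-definedness of $\beta_L$ in the converse direction: one must make sure the factorization chosen at each field assembles into a genuine natural transformation, which hinges precisely on the surjectivity of the quotient maps $\pi_L$ and their compatibility with scalar extension. Everything else is formal bookkeeping with the valuewise algebra structures.
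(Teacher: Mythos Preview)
Your proof is correct and follows essentially the same approach as the paper's (which is equally terse): surjectivity of each $\pi_L$ gives injectivity, the pointwise definition of the algebra structure gives the morphism property, and the image characterization is the universal property of the set-theoretic quotient.

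One small remark: your final check that each $\beta_L$ is a group homomorphism is both unnecessary and ill-posed. The source $(F/G)(L)$ carries no group structure in general (for instance $\Quad_n(L)/\!\sim$ is merely a set of similarity classes), so the expression ``$\beta_L(\bar x+\bar y)$'' does not make sense. Recall that invariants here are natural transformations of \emph{set}-valued functors --- the group structure lives only on the target --- so $\beta_L$ need only be a map of sets, and you have already established that.
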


\begin{proof}
  The fact that $F\to F/G$ is surjective by definition implies that $IW(F/G)\to IW(F)$
  and $IC(F/G)\to IC(F)$ are injective. The map on the level of invariants induced
  by some $F\to F'$ is always an algebra morphism, as the algebra structure is defined
  pointwise. Finally, $\alpha$ is in the image of the map if and only if it factorizes
  through an invariant of $F/G$, which is equivalent to the condition in the statement.
\end{proof}

\begin{defi}\label{def_even_odd}
  We say that $\alpha\in IW(F)$ (resp. $\alpha\in IC(F)$) is even if
  it is in the image of $IW(F/G)\to IW(F)$ (resp. $IC(F/G)\to IC(F)$).
  The subalgebra of even invariants is denoted $IW(F)_0$ (resp. $IC(F)_0$).

  We say that $\alpha\in IW(F)$ is odd if for all $L/K$, $x\in F(L)$ and
  $\lambda\in G(L)$, we have $\alpha(\lambda\cdot x)= \fdiag{\lambda}\alpha(x)$.
  We write $IW(F)_1$ for the set of odd invariants in $IW(F)$.
\end{defi}

\begin{ex}
  When $F=\Quad_n$, Serre proved (\cite{GMS}) that $IW(F)$ is a free $W(K)$-module
  with basis $(\lambda^d)_{0\ppq d\ppq n}$. Then the even invariants are those
  that are combinations of the $\lambda^d$ with $d$ even, and the odd invariants
  are those that are combinations of the $\lambda^d$ with $d$ odd.
\end{ex}

Note that there is no notion of odd cohomological invariant.

\begin{prop}\label{prop_even_part_witt}
  $IW(F)_1$ is a $W(K)$-submodule of $IW(F)$, and actually
  \[ IW(F) = IW(F)_0 \oplus IW(F)_1, \]
  which turns $IW(F)$ into a $\Z/2\Z$-graded algebra over $W(K)$.
\end{prop}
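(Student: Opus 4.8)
The plan is to establish the direct sum decomposition by exhibiting, for an arbitrary $\alpha\in IW(F)$, a canonical splitting into an even and an odd part, and then to check that each piece lands in the expected summand. First I would use the action of $G$ on $F$ together with the $W(K)$-module structure to define two operators on $IW(F)$. Given $\alpha\in IW(F)$, for any $L/K$ and $x\in F(L)$ the element $\alpha(\langle -1\rangle\cdot x)$ makes sense, and $\langle -1\rangle\in G(L)$ is a distinguished element of order dividing $2$; but more robustly, I would average over the action: set
\[
\alpha_0(x) = \tfrac{1}{2}\bigl(\alpha(x) + \alpha(\langle\lambda\rangle\cdot x)\bigr),\qquad
\alpha_1(x) = \tfrac{1}{2}\bigl(\alpha(x) - \alpha(\langle\lambda\rangle\cdot x)\bigr),
\]
except that halving is not available in $W(K)$. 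So instead the correct device is: for $\alpha\in IW(F)$ define $(\rho\alpha)(x) = \langle\lambda\rangle^{-1}\alpha(\langle\lambda\rangle\cdot x)$ for $\lambda$ a ``generic'' square class, but this still needs care to be well-defined. The cleanest route is the observation that $IW(F)$ is naturally a module over $IW(G)$ acting by twisting, where $G$ here is regarded as the functor of $1$-dimensional forms; the even invariants are the submodule killed by a suitable idempotent-like operator and the odd ones are its complement.

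Concretely, here is the step I would actually carry out. Consider the endomorphism $T$ of the abelian group $IW(F)$ defined by $(T\alpha)_L(x) = \langle\lambda\rangle\cdot\bigl(\alpha_L(\langle\lambda\rangle^{-1}\cdot x)\bigr)$ — wait, to avoid the choice of $\lambda$, I instead use that by Lemma \ref{lem_inv_even} an invariant $\alpha$ is even precisely when $\alpha(\langle\lambda\rangle\cdot x)=\alpha(x)$ for all $\lambda$, and I claim $\alpha$ is odd precisely when $\alpha(\langle\lambda\rangle\cdot x)=\langle\lambda\rangle\alpha(x)$. Define $\beta(x) = \alpha(\langle\lambda\rangle\cdot x)$ for a single chosen $\lambda$; the key point is that for the decomposition one does not average but rather argues: the map $x\mapsto \alpha(\langle\lambda\rangle\cdot x) - \langle\lambda\rangle\alpha(x)$ and the map $x\mapsto \alpha(\langle\lambda\rangle\cdot x) + \langle\lambda\rangle\alpha(x) - \langle\lambda\rangle\langle\lambda\rangle\alpha(x)$... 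The honest formulation: put $\alpha_1 = $ ``the odd projection'' defined by $\alpha_1(x) = \langle -1\rangle\bigl(\alpha(x) - \alpha(\langle -1\rangle\cdot x)\bigr)$ is not right dimensionally either. Let me state the plan at the level it should be written: since $G(L) = L^\times/(L^\times)^2$ is an $\mathbb{F}_2$-vector space, the group algebra approach works after all, because in $W(K)$ we have $\langle\lambda\rangle + \langle\lambda\rangle = \langle\lambda\rangle\langle 1,1\rangle$ and $\langle 1,1\rangle$ is not invertible, so one genuinely works with the sub- and quotient structure: define $IW(F)_1$ as in the statement, check it is a $W(K)$-submodule (immediate, since $\langle\mu\rangle$ commutes past the twist), then show $IW(F)_0\cap IW(F)_1 = 0$ (an invariant both even and odd satisfies $\alpha(x) = \langle\lambda\rangle\alpha(x)$ for all $\lambda$, hence $\langle 1,-\lambda\rangle\alpha(x)=0$, and taking $\lambda$ generic, e.g. over $L(t)$ with $\lambda = t$, forces $\alpha(x)=0$ by a specialization/residue argument, as $\langle\!\langle t\rangle\!\rangle$ is anisotropic), and finally show every $\alpha$ decomposes.

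For the decomposition step, the trick is to work over $L(t)$: given $\alpha\in IW(F)$ and $x\in F(L)$, consider $\alpha_{L(t)}(\langle t\rangle\cdot x_{L(t)})\in W(L(t))$. I would use the second residue homomorphism $\partial_t: W(L(t))\to W(L)$ to split this, writing $\alpha_{L(t)}(\langle t\rangle\cdot x) = u(x) + \langle t\rangle v(x)$ with $u(x) = $ (the part unramified at $t$, obtained via the first residue or specialization $t\mapsto 1$) and $v(x) = \partial_t\bigl(\alpha_{L(t)}(\langle t\rangle\cdot x)\bigr)$. The point is that $\langle t\rangle\cdot x$ is defined over $L(t)$ and functoriality forces $u$ and $v$ to be invariants of $F$ over all extensions simultaneously; then one checks $u = \alpha_0$ is even and $\langle\cdot\rangle v$ assembles to the odd part $\alpha_1$, using that specializing $t$ to various square classes recovers $\alpha(\langle\mu\rangle\cdot x)$ and matching the two sides. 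The main obstacle I expect is precisely making this $L(t)$-construction canonical and checking it is compatible with all scalar extensions (not just $L(t)/L$) so that $u$ and $v$ are honest natural transformations; this requires knowing that the residue maps behave well under base change and that the decomposition $W(L(t)) = W(L)\oplus\langle t\rangle W(L)$ (valid for the $t$-adic valuation, with the unramified part further split) is functorial in $L$. Once $\alpha = \alpha_0 + \alpha_1$ is established with $\alpha_0$ even and $\alpha_1$ odd, the $\mathbb{Z}/2\mathbb{Z}$-grading of the algebra structure follows formally: the product of two even (or two odd) invariants is even, and the product of an even and an odd one is odd, each verified directly from the defining twisting identities, e.g. $(\alpha\beta)(\langle\lambda\rangle x) = \alpha(\langle\lambda\rangle x)\beta(\langle\lambda\rangle x) = \langle\lambda\rangle\alpha(x)\cdot\langle\lambda\rangle\beta(x) = \langle\lambda\rangle^2\alpha(x)\beta(x)$, and $\langle\lambda\rangle^2 = \langle 1\rangle$ in $W(K)$, giving evenness of a product of two odd invariants.
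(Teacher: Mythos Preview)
Your final approach is correct and is essentially the same as the paper's, though arrived at circuitously. The paper fixes $x\in F(L)$, observes that $\lambda\mapsto \alpha(\lambda\cdot x_E)$ is a Witt invariant of the functor $G=\Quad_1$ over $L$, and then invokes the classification from \cite{GMS}: every such invariant has the form $\lambda\mapsto u+\fdiag{\lambda}v$ for unique $u,v\in W(L)$. Setting $\alpha_0(x)=u$, $\alpha_1(x)=v$ gives the decomposition, and uniqueness both shows these are natural in $x$ and gives $IW(F)_0\cap IW(F)_1=0$. Your $L(t)$-and-residue argument is precisely (one version of) the \emph{proof} of that \cite{GMS} result, so you are reproving it inline rather than citing it; the content is the same. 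Your verification of the $\Z/2\Z$-graded algebra structure via $\fdiag{\lambda}^2=\fdiag{1}$ matches the paper exactly.

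One stylistic point: the several abandoned attempts (averaging, $\fdiag{-1}$, the operator $T$) should be cut. The difficulty you identify with halving in $W(K)$ is real and is exactly why the generic-variable/residue technique is needed, but once you have committed to that route there is no reason to record the dead ends.
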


\begin{proof}
  The fact that $IW(F)_1$ is a submodule is clear by definition. Let $\alpha\in IW(F)$,
  and let us show that $\alpha=\alpha_0+\alpha_1$ for some uniquely defined
  $\alpha_0$ and $\alpha_1$, respectively even and odd.
  
  Consider some extension $L/K$ and some $x\in F(L)$. Then we can define
  a Witt invariant of $G$ over $L$ by $\beta: \lambda\mapsto \alpha(\lambda\cdot x_E)$
  for all extensions $E/L$ and $\lambda\in G(E)$. According to \cite{GMS}, there
  are uniquely defined elements $u,v\in W(L)$ such that $\beta(\lambda)=u+\fdiag{\lambda}v$
  for all $\lambda\in G(E)$ for all $E/L$. Then we can define $\alpha_0(x)=u$ and $\alpha_1(x)=v$,
  and the uniqueness property of $u$ and $v$ shows that this actually defines invariants
  $\alpha_0,\alpha_1\in IW(F)$, and it is clear by construction that they are
  respectively even and odd. Furthermore, the uniqueness property of $u$ and $v$
  shows that we must have $\alpha_0(x)=u$ and $\alpha_1(x)=v$ for any decomposition
  $\alpha=\alpha_0+\alpha_1$.

  The fact that this gives a $\Z/2Z$-graded algebra just amounts to checking
  that the product of two even/odd invariants is even/odd with the usual rules,
  and that follows directly from the definition.
\end{proof}

\begin{prop}\label{prop_even_part_cohom}
  For any $\alpha\in IC(F)$ there is a unique $\tld{\alpha}\in IC(F)_0$
  such that for all $L/K$, $x\in F(L)$ and $\lambda\in G(L)$, we have
  \[ \alpha(\lambda\cdot x) = \alpha(x) + (\lambda)\cup \tld{\alpha}(x), \]
  and $\alpha$ is even if and only if $\tld{\alpha}=0$.
\end{prop}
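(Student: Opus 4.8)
The plan is to mimic the proof of Proposition \ref{prop_even_part_witt}, replacing the classification of Witt invariants of $G$ by the analogous classification of cohomological invariants of $G$. Recall from \cite{GMS} that the cohomological invariants of the functor $L\mapsto L^\times/(L^\times)^2 = H^1(\bullet,\mu_2)$ (that is, of $\mathbf{G}_m/\mathbf{G}_m^2 = \mu_2$-torsors) are the free $h^*(K)$-module on $1$ and the identity class; concretely, for any invariant $\beta\in IC(\mu_2)$ over a field $L$ there are unique $c,d\in h^*(L)$ with $\beta(\lambda) = c + (\lambda)\cup d$ for all $\lambda\in L^\times/(L^\times)^2$ over all extensions of $L$.

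First I would fix $\alpha\in IC(F)$, an extension $L/K$ and $x\in F(L)$, and consider the cohomological invariant $\beta$ of $\mu_2$ over $L$ defined by $\beta(\lambda) = \alpha(\lambda\cdot x_E)$ for $E/L$ and $\lambda\in G(E)$. Applying the cited classification gives unique $c_x, d_x\in h^*(L)$ with $\alpha(\lambda\cdot x) = c_x + (\lambda)\cup d_x$; taking $\lambda = 1$ shows $c_x = \alpha(x)$, so in fact $\alpha(\lambda\cdot x) = \alpha(x) + (\lambda)\cup d_x$. Then I set $\tld{\alpha}(x) = d_x$. The uniqueness of $d_x$ forces compatibility with scalar extension (apply the identity over $E$ to $x_E$ and compare), so $\tld{\alpha}$ is a well-defined element of $IC(F)$.

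Next I would check $\tld{\alpha}$ is itself even, i.e.\ lies in $IC(F)_0$. By Lemma \ref{lem_inv_even} this means $\tld{\alpha}(\mu\cdot x) = \tld{\alpha}(x)$ for all $\mu\in G(L)$. This follows by a cocycle-type computation: expand $\alpha((\lambda\mu)\cdot x)$ in two ways — first as $\alpha(\lambda\cdot(\mu\cdot x)) = \alpha(\mu\cdot x) + (\lambda)\cup\tld{\alpha}(\mu\cdot x)$, and second as $\alpha(\mu\cdot x) + (\lambda\mu)\cup\tld{\alpha}(x) - (\mu)\cup\tld{\alpha}(x)$ using $\alpha((\lambda\mu)\cdot x) = \alpha(x)+(\lambda\mu)\cup\tld\alpha(x)$ and $\alpha(\mu\cdot x)=\alpha(x)+(\mu)\cup\tld\alpha(x)$; since $(\lambda\mu) = (\lambda)+(\mu)$ in $h^1$, comparing gives $(\lambda)\cup\tld{\alpha}(\mu\cdot x) = (\lambda)\cup\tld{\alpha}(x)$ for all $\lambda$, and then taking $\lambda$ generic over a rational function field $L(t)$ with $\lambda=(t)$ — where the cup product with $(t)$ is injective on $h^*(L)\subset h^*(L(t))$ — yields $\tld{\alpha}(\mu\cdot x) = \tld{\alpha}(x)$.

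Finally, uniqueness of $\tld{\alpha}$: if $\tld\alpha'$ also satisfies the displayed relation then $(\lambda)\cup(\tld\alpha(x)-\tld\alpha'(x)) = 0$ for all $\lambda$, and the same generic-$\lambda$ injectivity argument gives $\tld\alpha = \tld\alpha'$. The last claim, that $\alpha$ is even iff $\tld\alpha = 0$, is immediate from Lemma \ref{lem_inv_even}: $\alpha$ even means $\alpha(\lambda\cdot x)=\alpha(x)$ for all $\lambda$, which by the displayed relation and the injectivity of cup product with a generic $(\lambda)$ is equivalent to $\tld\alpha(x)=0$ for all $x$. The main obstacle is really just setting up the generic-element injectivity argument cleanly (cup product with $(t)\in h^1(L(t))$ being injective on the image of $h^*(L)$, which follows from the fact that $h^*(L(t)) = h^*(L)\oplus (t)\cup h^*(L)$, again a standard fact from \cite{GMS} / the computation of cohomology of a rational function field); everything else is a routine transcription of the Witt-invariant argument.
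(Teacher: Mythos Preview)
Your proof is correct and follows essentially the same route as the paper's: both reduce to the classification of cohomological invariants of $\mu_2$ from \cite{GMS}, set $\tld\alpha(x)$ to be the resulting coefficient $v$, and then run the same cocycle computation $\alpha((\lambda\mu)\cdot x)$ expanded two ways to prove $\tld\alpha$ is even. The only cosmetic difference is that where the paper simply says ``take residues at $\lambda$'' to cancel the factor $(\lambda)\cup$, you spell this out as injectivity of $(t)\cup(-)$ on $h^*(L)\subset h^*(L(t))$; note however that your description $h^*(L(t))=h^*(L)\oplus (t)\cup h^*(L)$ is not literally correct (Faddeev's sequence has contributions from all closed points), though the injectivity you need does hold, precisely because the residue at $t$ provides a left inverse.
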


\begin{proof}
  Consider some extension $L/K$ and some $x\in F(L)$. Then we can define
  a cohomological invariant of $G$ over $L$ by $\beta: \lambda\mapsto \alpha(\lambda\cdot x_E)$
  for all extensions $E/L$ and $\lambda\in G(E)$. According to \cite{GMS}, there
  are uniquely defined elements $u,v\in h^*(L)$ such that $\beta(\lambda)=u+(\lambda)\cup v$
  for all $\lambda\in G(E)$ for all $E/L$. Taking $\lambda=1$, we get $u=\alpha(x)$.
  Then we can define $\tld{\alpha}(x)=v$, and the uniqueness property of $u$ and $v$ shows
  that this actually defines an invariant $\tld{\alpha}\in IC(F)$, which by definition
  satisfies the expected formula, and is the only possible one by uniqueness of $v$.

  It is clear by definition that $\alpha$ is even if and only if $\tld{\alpha}=0$. It
  just remains to show that $\tld{\alpha}$ is always even. Now consider the following
  formula:
  \begin{align*}
    \alpha((\lambda\mu)\cdot x) &= \alpha(\mu\cdot x) + (\lambda)\cup \tld{\alpha}(\mu\cdot x) \\
                                &= \alpha(x) + (\lambda\mu)\cup \tld{\alpha}(x) \\
                                &= \alpha(x) + (\lambda)\cup \tld{\alpha}(x) +
                                  (\mu)\cup \tld{\alpha}(x).
  \end{align*}
  As this is valid for all $\lambda,\mu\in G(L)$ for all extensions $L$,
  we can take residues at $\lambda$ to get $\tld{\alpha}(\mu\cdot x)=\tld{\alpha}(x)$,
  which shows that $\tld{\alpha}$ is even.
\end{proof}

We now consider degrees of invariants, and the connexion between even Witt
and cohomological invariants:

\begin{prop}\label{prop_deg_even_inv}
  Let $\beta \in IW^{\pgq d}(F)$. Then its even and odd parts $\beta_0$ and $\beta_1$
  are in $IW^{\pgq d-1}(F)$, and have the same class in $IC^{d-1}(F)$. If $\beta$ is
  even or odd, then its class in $IC^d(F)$ is even.

  In fact, if $\alpha\in IC^d(F)$, then $\tld{\alpha}\in IC^{d-1}(F)$, and
  if moreover $\alpha$ is liftable and $\beta\in IW^{\pgq d}(F)$ is a lift of $\alpha$,
  then $\tld{\alpha}$ is also liftable and $\beta_0,\beta_1\in IW^{\pgq d-1}(F)$ are both
  lifts of $\tld{\alpha}$. 
\end{prop}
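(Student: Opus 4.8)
The plan is to reduce everything to the single Witt-ring identity $\fdiag{\lambda}-\fdiag{1}=\fdiag{-1}\pfis{\lambda}$, to the fact that each $I^{d}$ is an ideal of $W$, and to the residue maps at the prime $(t)$ of a rational function field $L(t)$ — on $W$ and on mod-$2$ Galois cohomology — together with the isomorphism of graded rings $\gr(W)\cong h^{*}$ (the graded form of the Milnor conjecture already invoked in Section \ref{sec_filt}), which takes the class of $\pfis{\lambda}\cdot w$ to $(\lambda)\cup\bar{w}$.

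\emph{First part.} Let $\beta\in IW^{\pgq d}(F)$. By the proof of Proposition \ref{prop_even_part_witt}, $\beta(\lambda\cdot x)=\beta_{0}(x)+\fdiag{\lambda}\beta_{1}(x)$ for all $L/K$, $x\in F(L)$ and $\lambda\in G(E)$, $E/L$; in particular $\beta(x)=\beta_{0}(x)+\beta_{1}(x)$, so $\beta(\lambda x)-\beta(x)=\fdiag{-1}\pfis{\lambda}\,\beta_{1}(x)$. First I would apply $\beta(\lambda\cdot x)=\beta_{0}(x)+\fdiag{\lambda}\beta_{1}(x)$ over $E=L(t)$ with $\lambda$ the class of $t$: the left side lies in $I^{d}(L(t))$, so the second residue $\partial_{(t)}\colon I^{d}(L(t))\to I^{d-1}(L)$ — which kills the unramified term $\beta_{0}(x)_{L(t)}$ and sends $\fdiag{t}\,\beta_{1}(x)_{L(t)}$ to $\beta_{1}(x)$ — yields $\beta_{1}(x)\in I^{d-1}(L)$, hence $\beta_{1}\in IW^{\pgq d-1}(F)$, and then $\beta_{0}=\beta-\beta_{1}\in IW^{\pgq d-1}(F)$. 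Since $2w=\pfis{-1}w$, we get $2\beta_{1}\in IW^{\pgq d}(F)$, so $\beta_{0}-\beta_{1}=\beta-2\beta_{1}$ lies in $IW^{\pgq d}(F)$, the kernel of $IW^{\pgq d-1}(F)\to IC^{d-1}(F)$; hence $\beta_{0}$ and $\beta_{1}$ have the same class in $IC^{d-1}(F)$. For the last assertion: if $\beta$ is even then $\beta(\lambda x)=\beta(x)$, so its class $\bar{\beta}$ satisfies $\bar{\beta}(\lambda x)=\bar{\beta}(x)$ and is even by Lemma \ref{lem_inv_even}; if $\beta$ is odd then $\beta(\lambda x)-\beta(x)=\fdiag{-1}\pfis{\lambda}\,\beta(x)\in I^{d+1}(L)$, so again $\bar{\beta}(\lambda x)=\bar{\beta}(x)$ and $\bar{\beta}$ is even.

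\emph{Second part.} For $\alpha\in IC^{d}(F)$, Proposition \ref{prop_even_part_cohom} gives $\alpha(\lambda x)=\alpha(x)+(\lambda)\cup\tld{\alpha}(x)$. Over $L(t)$ with $\lambda=(t)$ the element $(t)\cup\tld{\alpha}(x)=\alpha((t)x)-\alpha(x)$ lies in $H^{d}(L(t),\mu_2)$; applying $\partial_{(t)}$, which sends $(t)\cup c$ to $c$ for $c$ unramified, gives $\tld{\alpha}(x)\in H^{d-1}(L,\mu_2)$, so $\tld{\alpha}\in IC^{d-1}(F)$. Now suppose $\alpha$ is liftable with lift $\beta\in IW^{\pgq d}(F)$; by the first part $\beta_{0},\beta_{1}\in IW^{\pgq d-1}(F)$, and I claim the class $\overline{\beta_{1}}\in IC^{d-1}(F)$ of $\beta_{1}$ equals $\tld{\alpha}$. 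From $\beta(\lambda x)-\beta(x)=\fdiag{-1}\pfis{\lambda}\,\beta_{1}(x)\equiv\pfis{\lambda}\,\beta_{1}(x)\pmod{I^{d+1}(L)}$, reducing modulo $I^{d+1}(L)$ and using both that $\beta$ lifts $\alpha$ and that $\gr(W)\cong h^{*}$ takes $\pfis{\lambda}\cdot\beta_{1}(x)$ to $(\lambda)\cup\overline{\beta_{1}}(x)$, one gets $(\lambda)\cup\tld{\alpha}(x)=(\lambda)\cup\overline{\beta_{1}}(x)$ for all $\lambda$; taking $\lambda=(t)$ over $L(t)$ and applying $\partial_{(t)}$ yields $\overline{\beta_{1}}(x)=\tld{\alpha}(x)$. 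Thus $\tld{\alpha}$ is liftable, lifted by $\beta_{1}$, and since $\beta_{0}-\beta_{1}\in IW^{\pgq d}(F)$ it is also lifted by $\beta_{0}$.

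\emph{Main difficulty.} I do not expect a genuine obstacle: every step is the identity $\fdiag{\lambda}-\fdiag{1}=\fdiag{-1}\pfis{\lambda}$ combined with a residue computation. The only thing needing care is bookkeeping of which power of $I$ (degree $d-1$, $d$ or $d+1$) one works modulo when passing between Witt-theoretic and cohomological statements, plus the routine compatibilities of $\partial_{(t)}$ with the $I$-adic and $h^{*}$-gradings and its values on $\fdiag{t}$, on $(t)$, and on classes defined over $L$.
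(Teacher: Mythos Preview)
Your proof is correct and follows essentially the same route as the paper: both use the identity $\beta(\lambda\cdot x)-\beta(x)=-\pfis{\lambda}\beta_1(x)$ (your $\fdiag{-1}\pfis{\lambda}$ is the same element in $W$), then take a residue to force $\beta_1(x)\in I^{d-1}$, deduce $\beta_0\in IW^{\pgq d-1}$ and equality of classes via $2$-torsion, and repeat the residue argument on the cohomological side for $\tld{\alpha}$. The only difference is cosmetic: where the paper says ``take residues at $\lambda$'' you spell out the passage to $L(t)$ with $\lambda=t$ and the map $\partial_{(t)}$, which is exactly what is meant.
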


\begin{proof}
  Let $\beta\in IW^{\pgq d}(F)$.
  It follows from the definition of even/odd invariants that for any
  $L/K$, any $x\in F(L)$ and $\lambda\in G(L)$:
  \begin{equation}\label{eq_tld_pfis}
    \beta(\lambda\cdot x) = \beta(x) - \pfis{\lambda}\beta_1(x). 
  \end{equation}
  In particular, this means that $\pfis{\lambda}\beta_1(x)\in I^d(L)$.
  Since this is true for all $\lambda$ over all extensions of $L$, we can
  take residues at $\lambda$ to get that $\beta_1(x)\in I^{d-1}(L)$ (as
  residue maps satisfy $\partial(I^d)\subset I^{d-1}$, see \cite{GMS}).
  This means that $\beta_1\in IW^{\pgq d-1}$

  Now since $\beta_0+\beta_1=\beta \in IW^{\pgq d}$, we can conclude that
  $\beta_0\equiv -\beta_1$ mod $IW^{\pgq d}(F)$. As $IC^d(F)$ is a $2$-torsion
  group, this means that $\beta_0\equiv \beta_1$ mod $IW^{\pgq d}(F)$, and
  in particular $\beta_0\in IW^{\pgq d-1}(F)$ also.

  If $\beta$ is even or odd, then for all $L/K$, $x\in F(L)$ and $\lambda\in G(L)$:
  $\beta(\lambda\cdot x)\equiv \beta(x)$ mod $I^{d+1}(L)$. This is clear if $\beta$ is
  even (we have equality), and when $\beta$ is odd we use that for any $q\in I^d(L)$,
  $\fdiag{\lambda}q\equiv q$ mod $I^{d+1}(L)$.

  If $\alpha\in IC^d(F)$, then 
  $\alpha(\lambda\cdot x) = \alpha(x) + (\lambda)\cup \tld{\alpha}(x)$ shows that
  $(\lambda)\cup \tld{\alpha}(x)$ is in $h^d(L)$ for all $L/K$, $x\in F(x)$
  and $\lambda\in G(L)$, and taking residues at $\lambda$ shows that $\tld{\alpha(x)}$
  is in $h^{d-1}(L)$.

  Now if we assume that $\alpha$ is the class of $\beta$ in $IC^d(F)$, then
  formula (\ref{eq_tld_pfis}) shows that $-\beta_1$ is a lift of $\tld{\alpha}$
  (and therefore also $\beta_1$). Since $\beta_0$ and $\beta_1$ have the same class
  in $IC^{d-1}(F)$, they are both lifts.
\end{proof}

\begin{rem}\label{rem_lift}
  It would be tempting to guess that any liftable even invariant
  in $IC(F)_0$ has a lift in $IW(F)_0$, but there is actually no
  guarantee that this happens. On the other hand, the even invariants
  that are of the form $\tld{\alpha}$ for some liftable $\alpha$
  do indeed have an even lift, according to the proposition.

  Even assuming that all invariants are liftable, it may very well happen
  that not all even cohomological invariants have the form $\tld{\alpha}$
  (a counter-example is given by $F=I^n$ with $n\pgq 2$, see \cite{G3}).

  On the other hand, if $F=\Quad_{2r}$ for $r\in \N^*$, then all even
  cohomological invariants do have the form $\tld{\alpha}$ (\cite{G3}), and thus
  they all have a lift in $IW(F)_0$.
\end{rem}

\subsection{Invariants of quadratic forms}

Serre gave in \cite{GMS} a complete description of the Witt and cohomological
invariants of $\Quad_r$. Precisely, $IW(\Quad_r)$ is a free $W(K)$-module
with basis $(\lambda^d)_{0\ppq d\ppq r}$, where we recall that $\lambda^d:GW(K)\to GW(K)$
is an operation such that
\[ \lambda^d(\fdiag{a_i}_{i\in X}) = \fdiag{a_I}_{|I|=d}. \]
These operations turn $GW(K)$ into a pre-$\lambda$-ring. 

Actually the $\lambda^d$ are a bit better than just a basis.

\begin{defi}
  Let $F:\mathbf{Field}_{/K}\to \mathbf{Set}$ and $A:\mathbf{Field}_{/K}\to \mathbf{Ring}$
  be functors, and $M=\Inv(F,A)$ the ring of invariants of $F$ with values in $A$.
  For any field extension $L/K$, let $F_L$ and $A_L$ be the restrictions of 
  $F$ and $A$ to $\mathbf{Field}_{/L}$, and $M_L = \Inv(F_L,A_L)$. 

  We say that a family of invariants $(\alpha_i)_i$ in $M$ is a strong basis
  if, for any $L/K$, the family $((\alpha_i)_L)_i$ is an $A(L)$-basis of $M_L$.
\end{defi}

\begin{ex}\label{ex_lambda_basis}
  Since Serre's result is valid over any $L/K$, we get that $(\lambda^d)_{0\ppq d\ppq r}$
  is a strong basis of $IW(\Quad_r)$.
\end{ex}

\begin{prop}\label{prop_strong_basis}
  Let $F:\mathbf{Field}_{/K}\to \mathbf{Set}$ and $A:\mathbf{Field}_{/K}\to \mathbf{Ring}$
  be functors, and $M=\Inv(F,A)$ the ring of invariants of $F$ with values in $A$.
  If $M$ admits a finite strong basis, then any $A(K)$-basis of $M$ is actually a 
  strong basis.
\end{prop}

\begin{proof}
  Let $(\alpha_i)_{1\ppq i\ppq n}$ be a strong basis of $M$, and let 
  $(\beta_i)_{1\ppq i\ppq n}$ be any $A(K)$-basis of $M$. Then the matrix
  $D\in M_n(A(K))$ of the $(\beta_i)_{1\ppq i\ppq n}$ with respect to the basis 
  $(\alpha_i)_{1\ppq i\ppq n}$ is invertible. If $L/K$ is any extension,
  the image $D_L\in M_n(A(L))$ of $D$ by the map $M_n(A(K))\to M_n(A(L))$
  induced by $A(K)\o A(L)$ is the matrix of $((\beta_i)_L)_{1\ppq i\ppq n}$
  with respect to $((\alpha_i)_L)_{1\ppq i\ppq n}$. Since $D$ is invertible,
  so is $D_L$, and therefore $((\beta_i)_L)_{1\ppq i\ppq n}$ is an $A(L)$-basis
  of $M_L$.
\end{proof}

From Example \ref{ex_lambda_basis} and Proposition \ref{prop_strong_basis}
we get:

\begin{coro}\label{cor_strong_basis_w}
  Any $W(K)$-basis of $IW(\Quad_r)$ is a strong basis.
\end{coro}

Let us give another
basis of $IW(\Quad_r)$. In any pre-$\lambda$-ring $R$, one may define
\begin{equation}\label{eq_prd_lambda}
  P_r^d = \sum_{k=0}^d (-1)^k \binom{r-k}{d-k}\lambda^k 
\end{equation}
for all $r\in \N^*$ and $d\in \{0,\dots,r\}$. Clearly, since the family 
$(P_r^d)_{0\ppq d\ppq r}$ is triangular with respect to the basis $(\lambda^d)$
(by which we mean that the transition matrix is triangular with invertible
elements on the diagonal, in this case the diagonal elements being $\pm 1$),
it is also a $W(K)$-basis of $IW(\Quad_r)$, and therefore a strong basis.

Then according to \cite{G3}, a purely formal property of the $P_r^d$, valid
in any pre-$\lambda$-ring $R$, is that
\begin{equation}\label{eq_sum_prd}
  P_{s+t}^d(q_1+q_2) = \sum_{d_1+d_2=d}P_s^{d_1}(q_1)P_t^{d_2}(q_2).
\end{equation}
for any $q_1,q_2\in R$.

\begin{prop}\label{prop_prd}
  Let $X$ be a finite set with $r$ elements, and let $(a_i)_{i\in X}\in (K^\times)^X$
  be a family of elements in $K^\times$. Let us define $f: \mathcal{P}(X)\to K^\times/(K^\times)^2$
  by $f(\{i\})=-a_i$. Then
  \[P_r^d(\fdiag{a_i}_{i\in X}) =   \sum_{|I|=d} \pfis{f|\mathcal{P}(I)}. \]
  In particular, $P_r^d\in IW^{\pgq d}(\Quad_r)$.
\end{prop}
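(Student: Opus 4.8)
The plan is to compute $P_r^d(\fdiag{a_i}_{i\in X})$ directly by expanding the definition \eqref{eq_prd_lambda} of $P_r^d$ in terms of the $\lambda^k$, using the explicit formula $\lambda^k(\fdiag{a_i}_{i\in X}) = \fdiag{a_J}_{|J|=k}$, and then regrouping the resulting sum over subsets $J \subset X$ according to which $d$-element set $I$ they are "attached to". Concretely, after substitution we get
\[ P_r^d(\fdiag{a_i}_{i\in X}) = \sum_{k=0}^d (-1)^k \binom{r-k}{d-k} \sum_{|J|=k} \fdiag{a_J}. \]
On the other side, by Definition \ref{def_pfis_1} applied to the linear subspace $\mathcal{P}(I) \subset \mathcal{P}(X)$, we have $\pfis{f|\mathcal{P}(I)} = \sum_{J \in \mathcal{P}(I)} \fdiag{f(J)}$, and since $f(J) = (-1)^{|J|} a_J$ in $K^\times/(K^\times)^2$, this is $\sum_{J \subset I} \fdiag{(-1)^{|J|} a_J}$. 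So the right-hand side of the claimed identity is $\sum_{|I|=d} \sum_{J \subset I} \fdiag{(-1)^{|J|} a_J}$. The key combinatorial point is then to reorganize this double sum over the pair $(I,J)$ with $J \subset I$, $|I| = d$, by summing over $J$ first: a fixed $J$ with $|J| = k \leq d$ is contained in exactly $\binom{r-k}{d-k}$ such sets $I$. This gives exactly $\sum_{k=0}^d \binom{r-k}{d-k} \sum_{|J|=k} \fdiag{(-1)^k a_J}$, matching the expansion of $P_r^d$ above once one notes $\fdiag{(-1)^k a_J} = (-1)^k \fdiag{a_J}$ — wait, that sign is not literally a scalar; rather $\fdiag{(-1)^k a_J}$ and $(-1)^k\fdiag{a_J}$ are equal in $GW(K)$ only after care, so the cleanest route is to compare coefficients of each $\fdiag{(-1)^{|J|}a_J}$ directly rather than pulling signs out.

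A slightly slicker alternative, which I would present instead, is to avoid re-deriving the combinatorics from scratch and instead invoke the additivity formula \eqref{eq_sum_prd}. Writing $\fdiag{a_i}_{i\in X} = \sum_{i \in X} \fdiag{a_i}$ as a sum of $r$ one-dimensional forms and applying \eqref{eq_sum_prd} repeatedly (with each $q_i = \fdiag{a_i}$, $s = t = \cdots = 1$), we get
\[ P_r^d(\fdiag{a_i}_{i\in X}) = \sum_{\substack{d_i \in \{0,1\} \\ \sum d_i = d}} \prod_{i \in X} P_1^{d_i}(\fdiag{a_i}) = \sum_{|I| = d} \prod_{i \in I} P_1^1(\fdiag{a_i}) \prod_{i \notin I} P_1^0(\fdiag{a_i}), \]
where the outer sum is over $d$-element subsets $I \subset X$ (the set of indices where $d_i = 1$). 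One computes directly from \eqref{eq_prd_lambda} that $P_1^0 = \lambda^0 = 1$ (the identity, sending any form to $\fdiag{1}$ in the relevant sense — more precisely $P_1^0(q) = \lambda^0(q)$ which is the unit) and $P_1^1 = \lambda^1 - \lambda^0$, so $P_1^1(\fdiag{a_i}) = \fdiag{a_i} - \fdiag{1} = \fdiag{1,-a_i}\cdot\fdiag{-1}\cdots$, but cleaner: $P_1^1(\fdiag{a}) = \fdiag{a} + \fdiag{-1} \cdot \fdiag{1}$? Let me just say: $P_1^1(\fdiag{a}) = \langle a\rangle - \langle 1\rangle$ in $GW(K)$, and in the Witt ring this equals $-\pfis{a}$ up to the standard sign, so that $\prod_{i \in I} P_1^1(\fdiag{a_i}) = \prod_{i \in I}(\fdiag{a_i} - \fdiag{1})$ is (by direct expansion, or by matching with Definition \ref{def_pfis_1}) precisely $\pfis{f|\mathcal{P}(I)}$ with $f(\{i\}) = -a_i$. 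This identification is the routine but necessary check.

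I expect the main obstacle to be bookkeeping the sign conventions cleanly: the definition $\pfis{a_1,\dots,a_n} = \fdiag{1,-a_1}\cdots\fdiag{1,-a_n}$ carries the minus signs that make $f(\{i\}) = -a_i$ rather than $+a_i$, and one must check that $\prod_{i\in I}(\fdiag{1} + \fdiag{-a_i})$ — the naive expansion of a product of $P_1^1$-type factors plus units — genuinely reproduces $\sum_{J \subset I}\fdiag{(-1)^{|J|}a_J} = \sum_{J\subset I}\fdiag{f(J)} = \pfis{f|\mathcal{P}(I)}$, including the case $J = \emptyset$ giving $\fdiag{1}$. Once that identification is nailed down, the final clause $P_r^d \in IW^{\pgq d}(\Quad_r)$ follows immediately from the Proposition proved just before this one: each $\pfis{f|\mathcal{P}(I)}$ is a (general) $d$-fold Pfister form, hence lies in $I^d(L)$, and a sum of elements of $I^d$ is in $I^d$, so the invariant $P_r^d$ takes values in $I^d$ and therefore lies in $IW^{\pgq d}(\Quad_r) = \Inv(\Quad_r, I^d)$.
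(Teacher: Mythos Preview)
Your second approach --- iterating the additivity formula \eqref{eq_sum_prd} to reduce to a product of $P_1^1(\fdiag{a_i})$'s --- is exactly the paper's proof. One sign slip to fix: from \eqref{eq_prd_lambda} one has $P_1^1 = \lambda^0 - \lambda^1$ (not $\lambda^1 - \lambda^0$), so $P_1^1(\fdiag{a}) = \fdiag{1} - \fdiag{a}$, which in $W(K)$ equals $\fdiag{1,-a} = \pfis{a}$ on the nose since $\fdiag{a}+\fdiag{-a}$ is hyperbolic; with this correction the identification $\prod_{i\in I}\pfis{a_i} = \pfis{f|\mathcal{P}(I)}$ is immediate and the sign bookkeeping you worried about disappears. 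Your first approach (expanding via $\lambda^k$ and regrouping over $J$) is a valid alternative the paper does not take: it works in $W(K)$ precisely because there $\fdiag{(-1)^k a_J} = (-1)^k\fdiag{a_J}$, which resolves the sign issue you flagged.
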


\begin{proof}
  With a simple induction using (\ref{eq_sum_prd}), we see that
  \[ P_r^d(\fdiag{a_i}_{i\in X}) = \sum_{|I|=d} \prod_{i\in I}P_1^1(\fdiag{a_i}), \]
  and we conclude using that $P_1^1(\fdiag{a})=\pfis{a}$, as (\ref{eq_prd_lambda})
  tells us that $P_1^1=\lambda^0-\lambda^1$.

  Since $\pfis{f|\mathcal{P}(I)}$ is a $d$-fold Pfister form, it follows that
  $P_r^d$ has values in $I^d$.
\end{proof}

When $d=r$, we get a single Pfister form:
\[ P_r^r(\fdiag{a_i}_{i\in X}) = \pfis{f|\mathcal{P}(X)} = \pfis{a_i}_{i\in X}. \]

\begin{rem}
  The proposition shows that when $d>r$, $P_r^d$ is zero on $\Quad_r$.
\end{rem}

Now regarding cohomological invariants, \cite{GMS} proves that $IC(\Quad_r)$ is also
a free $h^*(K)$-module, with basis $(w_d)_{0\ppq d\ppq r}$, where the $w_d$ are
the Stiefel-Whitney invariants:
\begin{equation}\label{eq_wd}
  w_d(\fdiag{a_i}_{i\in X}) = \sum_{|I|=d} (a_i)_{i\in I} \in h^d(K) 
\end{equation}
where $(a_i)_{i\in I}$ means the cup-product of all $(a_i)\in h^1(K)$
for $i\in I$.

\begin{rem}
  Since this is valid over any $L/K$, the $w_d$ actually form a strong
  basis of $IC(\Quad_r)$, so any $h^*(K)$-basis of $IC(\Quad_r)$ is a 
  strong basis.
\end{rem}

We record the following obvious but important observation:

\begin{prop}\label{prop_ic_quad}
  The invariant $w_d\in IC(\Quad_r)$ is the class in $IC^d(\Quad_r)$
  of $P_r^d\in IW^{\pgq d}(\Quad_r)$. In particular, all invariants in
  $IC(\Quad_r)$ are liftable.
\end{prop}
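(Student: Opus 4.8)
The plan is to show that the reduction modulo $I^{d+1}$ of the Witt invariant $P_r^d$ coincides with the Stiefel-Whitney invariant $w_d$, which immediately gives both assertions: it exhibits $w_d$ as the class of $P_r^d \in IW^{\pgq d}(\Quad_r)$, and since the $w_d$ form an $h^*(K)$-basis of $IC(\Quad_r)$, it shows every cohomological invariant is a combination of classes of Witt invariants, hence liftable.

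First I would recall from Proposition \ref{prop_prd} that
\[ P_r^d(\fdiag{a_i}_{i\in X}) = \sum_{|I|=d} \pfis{f|\mathcal{P}(I)}, \]
where $f(\{i\})=-a_i$, and that each summand $\pfis{f|\mathcal{P}(I)}$ is the $d$-fold Pfister form $\pfis{a_i}_{i\in I}$. In particular $P_r^d$ has values in $I^d$, so it has a well-defined class in $IC^d(\Quad_r)=\Inv(\Quad_r,H^d(\bullet,\mu_2))$ via the identification $I^d/I^{d+1}\simeq H^d(\bullet,\mu_2)$ coming from the Milnor conjecture (the example after the filtered/graded setup). Then I would invoke the standard fact that the class of the pure part of a Pfister form $\pfis{b_1,\dots,b_d}$ in $I^d/I^{d+1}$ is the symbol $(b_1)\cup\cdots\cup(b_d)\in H^d(\bullet,\mu_2)$; applied to each summand with $b_i = a_i$ (the sign is irrelevant since $(-a_i)$ and $(a_i)$ have the same image after the appropriate normalization — more carefully, one checks the standard convention under which $e_d(\pfis{a_i}_{i\in I}) = (a_i)_{i\in I}$), this yields
\[ \text{class of } P_r^d(\fdiag{a_i}_{i\in X}) = \sum_{|I|=d}(a_i)_{i\in I} = w_d(\fdiag{a_i}_{i\in X}) \]
in $h^d(K)$, by the defining formula (\ref{eq_wd}) for $w_d$. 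Since both sides are invariants and agree on all diagonal forms over all extensions, and diagonal forms exhaust $\Quad_r(L)$ for every $L$, they agree as invariants.

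For the final sentence, since $(w_d)_{0\ppq d\ppq r}$ is an $h^*(K)$-basis of $IC(\Quad_r)$ by \cite{GMS}, and each $w_d$ is the class of the Witt invariant $P_r^d \in IW^{\pgq d}(\Quad_r)$, every element of $IC^d(\Quad_r)$ is an $h^*(K)$-combination of such classes; but $h^*(K)$-multiplication on $IC(\Quad_r)$ is realized by $W(K)$-multiplication on $IW(\Quad_r)$ modulo higher filtration (the product of a degree-$e$ element of $W(K)$ with a lift of a degree-$d$ invariant lies in $IW^{\pgq d+e}$ and reduces to the cup product), so any such combination lifts to an element of $IW^{\pgq d}(\Quad_r)$. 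Hence all invariants in $IC(\Quad_r)$ are liftable. The main obstacle is purely bookkeeping around sign and normalization conventions: pinning down exactly which identification $I^d/I^{d+1}\simeq H^d$ makes $e_d(\pfis{a_1,\dots,a_d})=(a_1)\cup\cdots\cup(a_d)$ rather than picking up spurious factors of $(-1)$, and checking this is consistent with the minus signs built into the notation $\pfis{a_i}$ and into the definition $f(\{i\})=-a_i$; once the conventions are fixed this is routine.
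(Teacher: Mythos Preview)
Your proposal is correct and follows essentially the same approach as the paper: compare the Pfister decomposition of $P_r^d$ from Proposition \ref{prop_prd} with the formula (\ref{eq_wd}) for $w_d$, using that the class of $\pfis{a_i}_{i\in I}$ in $h^d(K)$ is $(a_i)_{i\in I}$. Your sign worries are unnecessary, since the convention $\pfis{a}=\fdiag{1,-a}$ and the choice $f(\{i\})=-a_i$ are arranged precisely so that $\pfis{f|\mathcal{P}(I)}=\pfis{a_i}_{i\in I}$, whose class is $(a_i)_{i\in I}$ on the nose; and your elaboration on liftability is more detailed than the paper's (which leaves it implicit), but correct.
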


\begin{proof}
  Comparing (\ref{eq_wd}) and Proposition \ref{prop_prd}, we may conclude
  using the fact that if $|I|=d$, the class of $\pfis{a_i}_{i\in I}$ in $h^d(K)$
  is exactly $(a_i)_{i\in I}$.
\end{proof}

\subsection{Invariants of families of quadratic forms}

As we explained at the beginning of the section, we are also interested
in invariants of $(\Quad_m)^X$ where $m\in \N^*$ and $X$ is a finite
set with $r$ elements.

For any $I$-indexed family $(\alpha_d)_{d\in I}$ of invariants in $IW(\Quad_m)$,
and any function $\gamma:X\to I$, we may define $\alpha_\gamma\in IW((\Quad_m)^X)$
by
\begin{equation}\label{eq_def_alpha_gamma}
  \alpha_\gamma((q_i)_{i\in X}) = \prod_{i\in X}\alpha_{\gamma(i)}(q_i).
\end{equation}
Likewise, if the $\alpha_d$ are in $IC(\Quad_m)$, we get invariants
$\alpha_\gamma$ in $IC((\Quad_m)^X)$ by taking cup-products.

For any $f: X\to \N$, we write $|f|=\sum_{i\in X}f(i)$.
Then clearly, if $\alpha_d\in IW^{\pgq f(d)}(\Quad_m)$ for some function $f:I\to \N$,
then $\alpha_\gamma\in IW^{\pgq |f\circ \gamma|}((\Quad_m)^X)$. The same thing
holds for degrees of cohomological invariants.

Also, if $\beta_d$ is the class of $\alpha_d$ in $IC^{f(d)}(\Quad_m)$ for all $d\in I$,
then $\beta_\gamma$ is the class of $\alpha_\gamma$ in $IC^{|f\circ \gamma|}((\Quad_m)^X)$.

\begin{lem}\label{lem_basis_gamma}
  Let $(\alpha_d)_{d\in I}$ be a $W(K)$-basis of $IW(\Quad_r)$.
  Then the $\alpha_\gamma$ with $\gamma: X\to I$ form
  a strong basis of $IW((\Quad_m)^X)$. The same result holds for a strong 
  basis of $IC(\Quad_r)$.
\end{lem}

\begin{proof}
  We proceed by induction on $|X|$; if $|X|=1$
  then this is clear. Suppose $X=\{x\}\cup Y$, and let $\alpha\in IW((\Quad_m)^X)$.
  For any $L/K$ and $q\in \Quad_m(L)$, we can define an invariant $\beta_q\in IW((\Quad_m)^Y)$
  over $L$ as follows: if $E/L$ is an extension and $(q_i)_{i\in Y}\in \Quad_m(E)^Y$, then let
  $q_x=q_E\in \Quad_m(E)$, and set $\beta_q((q_i)_{i\in Y})=\alpha((q_i)_{i\in X}$.

  Then $\beta_q$ decomposes uniquely as $\beta_q = \sum_\gamma a_\gamma(q) \alpha_\gamma$
  with $\gamma:Y\to I$ and $a_\gamma(q)\in W(L)$ (we can apply the induction hypothesis
  over the base field $L$ since the $\alpha_d$ remain a basis over $L$ and
  any extension). Then for each $\gamma$, $q\mapsto a_\gamma(q)$ defines an invariant
  in $IW(\Quad_m)$ over $K$, which itself decomposes uniquely as a combination of
  the $\alpha_d$ with coefficients in $W(K)$. Finally, this means that $\alpha$
  itself decomposes uniquely as a combination of the $\alpha_\gamma$ with $\gamma:X\to I$.

  The proof is the same for cohomological invariants.
\end{proof}

We then apply this construction to our basic invariants $P_m^d$ and $w_d$ to get
invariants $P_m^\gamma\in IW^{\pgq |\gamma|}((\Quad_m)^X)$ and
$w_\gamma \in IC^{|\gamma|}((\Quad_m)^X)$ for any $\gamma:X\to \{0,\dots,m\}$.

Explicitly:
\begin{equation}\label{eq_pm_gamma}
  P_m^\gamma((q_i)_{i\in X}) = \prod_{i\in X}P_m^{\gamma(i)}(q_i).
\end{equation}
and
\begin{equation}
  w_\gamma((q_i)_{i\in X}) = \cup_{i\in X}w_{\gamma(i)}(q_i).
\end{equation}

\begin{prop}
  Let $I=\{0,\dots,m\}$. Then the $W(K)$-module $IW((\Quad_m)^X)$
  is free with basis the $P_m^\gamma$ with $\gamma:X\to I$. Likewise,
  $IC((\Quad_m)^X)$ is free with basis the $w_\gamma$, and $w_\gamma$ is
  the class of $P_m^\gamma$ in $IC^{|\gamma|}((\Quad_m)^X)$.
  In particular, all invariants in $IC((\Quad_m)^X)$ are liftable.
\end{prop}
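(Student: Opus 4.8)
The plan is to deduce everything from Lemma~\ref{lem_basis_gamma} together with the already-established description of $IW(\Quad_m)$ and $IC(\Quad_m)$. First I would recall from Serre's theorem that $(P_m^d)_{0\ppq d\ppq m}$ is a $W(K)$-basis of $IW(\Quad_m)$: indeed $(\lambda^d)_{0\ppq d\ppq m}$ is a basis by \cite{GMS}, and the $P_m^d$ are obtained from the $\lambda^d$ by the unitriangular change of coordinates \eqref{eq_prd_lambda}, so they form a basis as well. The same argument shows $(w_d)_{0\ppq d\ppq m}$ is an $h^*(K)$-basis of $IC(\Quad_m)$. The key point needed to invoke Lemma~\ref{lem_basis_gamma} is that these bases remain bases after any scalar extension $L/K$; but this is immediate, since Serre's description of $IW(\Quad_m)$ and $IC(\Quad_m)$ is valid over any base field (the basis elements $\lambda^d$, $P_m^d$, $w_d$ are defined uniformly and functorially), so the same unitriangular argument applies over $L$.

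With that in place, Lemma~\ref{lem_basis_gamma} applied to $(P_m^d)_{d\in I}$ gives directly that $(P_m^\gamma)_{\gamma:X\to I}$ is a $W(K)$-basis of $IW((\Quad_m)^X)$, and applied to $(w_d)_{d\in I}$ gives that $(w_\gamma)_{\gamma:X\to I}$ is an $h^*(K)$-basis of $IC((\Quad_m)^X)$. It remains to check the degree and lifting statements. For degrees: by Proposition~\ref{prop_prd} we have $P_m^d\in IW^{\pgq d}(\Quad_m)$, hence by the multiplicativity of the filtration under products (the remark preceding Lemma~\ref{lem_basis_gamma}, with $f=\Id$) we get $P_m^\gamma\in IW^{\pgq|\gamma|}((\Quad_m)^X)$, and similarly $w_\gamma\in IC^{|\gamma|}((\Quad_m)^X)$.

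For the lifting claim, I would argue that $w_\gamma$ is the class of $P_m^\gamma$ in $IC^{|\gamma|}((\Quad_m)^X)$. By Proposition~\ref{prop_ic_quad}, $w_d$ is the class of $P_m^d$ in $IC^d(\Quad_m)$ for each $d$. Then the compatibility statement recorded just before Lemma~\ref{lem_basis_gamma} — that if $\beta_d$ is the class of $\alpha_d$ in $IC^{f(d)}(\Quad_m)$ then $\beta_\gamma$ is the class of $\alpha_\gamma$ in $IC^{|f\circ\gamma|}((\Quad_m)^X)$ — applied with $\alpha_d=P_m^d$, $\beta_d=w_d$, $f=\Id$, yields exactly that $w_\gamma$ is the class of $P_m^\gamma$. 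Since the $w_\gamma$ form a basis of $IC((\Quad_m)^X)$ and each is the reduction of a Witt invariant of the appropriate degree, every element of $IC((\Quad_m)^X)$ is an $h^*(K)$-combination of liftable invariants, hence liftable (one lifts each $w_\gamma$ to $P_m^\gamma$ and lifts cohomology coefficients to Witt coefficients via $h^1(K)=I(K)/I^2(K)$ and cup-products to products of Pfister forms, noting the degree bookkeeping works out because $P_m^\gamma\in IW^{\pgq|\gamma|}$).

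The only genuinely non-routine point is making sure Lemma~\ref{lem_basis_gamma} is applicable, i.e.\ that the bases are stable under scalar extension; but as noted this is built into Serre's theorem, so there is no real obstacle — the proposition is essentially a formal consequence of the three facts Lemma~\ref{lem_basis_gamma}, Proposition~\ref{prop_prd} and Proposition~\ref{prop_ic_quad}, together with the elementary degree-additivity observation. I would simply write ``This follows from Lemma~\ref{lem_basis_gamma} applied to the bases $(P_m^d)$ of $IW(\Quad_m)$ and $(w_d)$ of $IC(\Quad_m)$, which remain bases over any extension by \cite{GMS}; the degree statement follows from Proposition~\ref{prop_prd}, and the last assertion from Proposition~\ref{prop_ic_quad} together with the compatibility of classes and products noted above.''
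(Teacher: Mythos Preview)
Your proposal is correct and follows essentially the same approach as the paper: both apply Lemma~\ref{lem_basis_gamma} to the bases $(P_m^d)$ and $(w_d)$, using that Serre's description holds over any base field so the bases are stable under extension. Your write-up is considerably more detailed than the paper's (which is a two-line proof), in particular you spell out the degree and liftability arguments that the paper leaves implicit; your treatment of liftability via lifting the coefficients $c_\gamma\in h^{d-|\gamma|}(K)$ to $I^{d-|\gamma|}(K)$ is a correct elaboration of what the paper takes for granted.
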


\begin{proof}
  We can apply Lemma \ref{lem_basis_gamma} since the $P_m^d$ form a basis of invariants,
  and that remains valid over any field extension (as we made no assumption
  on the base field). The same applies to $w_\gamma$.
\end{proof}

\begin{prop}\label{prop_decom_prd_gamma}
  We have for any $L/K$ and any $(q_i)_{i\in X}\in (\Quad_m(L))^X$:
  \begin{align*}
    P_{rm}^d\left(\sum_{i\in X}q_i\right) &= \sum_{|\gamma|=d}P_m^\gamma((q_i)_{i\in X}) \\
    w_d\left(\sum_{i\in X}q_i\right) &= \sum_{|\gamma|=d}w_\gamma((q_i)_{i\in X}).
  \end{align*}
  In other words, the image of $P_{rm}^d$ through the canonical map
  $IW(\Quad_{rm})\to IW((\Quad_m)^X)$ is $\sum_{|\gamma|=d}P_m^\gamma$,
  and likewise the image of $w_d$ in $IC((\Quad_m)^X)$ is
  $\sum_{|\gamma|=d}w_\gamma((q_i)_{i\in X})$.
\end{prop}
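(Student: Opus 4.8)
The plan is to prove the $P^d$ identity first and then deduce the cohomological one as a special case by passing to the class in $IC^d$. For the Witt statement, the starting point is the formula (\ref{eq_sum_prd}) applied with $s = t = m$ but iterated: writing $\sum_{i\in X}q_i$ as an iterated sum, repeated application of (\ref{eq_sum_prd}) over the $r$ summands (formally an induction on $|X|$, exactly as in the proof of Proposition \ref{prop_prd}) gives
\[ P_{rm}^d\Bigl(\sum_{i\in X}q_i\Bigr) = \sum_{\gamma:X\to\N,\ |\gamma|=d} \prod_{i\in X}P_m^{\gamma(i)}(q_i). \]
Here a priori $\gamma$ ranges over all functions $X\to\N$ with $|\gamma|=d$, but by the Remark after Proposition \ref{prop_prd} we have $P_m^{\gamma(i)} = 0$ as an invariant of $\Quad_m$ whenever $\gamma(i) > m$, so only the $\gamma$ with values in $I = \{0,\dots,m\}$ contribute. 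By the definition (\ref{eq_pm_gamma}) of $P_m^\gamma$, the right-hand side is exactly $\sum_{|\gamma|=d}P_m^\gamma((q_i)_{i\in X})$, which is the first claimed identity; the reformulation in terms of the canonical map $IW(\Quad_{rm})\to IW((\Quad_m)^X)$ is then just unwinding what that map does, namely precomposition with $(q_i)_{i\in X}\mapsto \sum_i q_i$.

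For the second identity, I would not redo the combinatorics but instead take the class of the first identity in $IC^d((\Quad_m)^X)$. On the left, $P_{rm}^d \in IW^{\pgq d}(\Quad_{rm})$ has class $w_d$ by Proposition \ref{prop_ic_quad}, and since the canonical map $IW(\Quad_{rm})\to IW((\Quad_m)^X)$ is compatible with the filtrations and with reduction modulo $I^{d+1}$, the class of the image of $P_{rm}^d$ is the image of $w_d$. On the right, each $P_m^\gamma$ with $|\gamma|=d$ lies in $IW^{\pgq d}((\Quad_m)^X)$ and its class in $IC^d$ is $w_\gamma$ (this is recorded just before Lemma \ref{lem_basis_gamma}, and follows because the class of a product $\prod_i P_m^{\gamma(i)}(q_i)$ of forms in $I^{\gamma(i)}$ is the cup product $\bigcup_i w_{\gamma(i)}(q_i)$, all cross terms landing in higher degree). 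Hence reducing mod $I^{d+1}$ turns the sum $\sum_{|\gamma|=d}P_m^\gamma$ into $\sum_{|\gamma|=d}w_\gamma$, giving the second identity.

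The only genuinely delicate point is bookkeeping in the first step: one must be careful that the binomial-coefficient identity (\ref{eq_sum_prd}), which is stated for two summands, telescopes correctly to $r$ summands and produces precisely the multinomial-type sum over $\gamma$ with $\sum_i\gamma(i)=d$, with no spurious terms or missing constraints. This is handled cleanly by the same induction on $|X|$ used in Proposition \ref{prop_prd}, splitting $X = \{x\}\sqcup Y$ and applying (\ref{eq_sum_prd}) with $s = m$, $t = (|X|-1)m$; I expect this to be routine rather than an obstacle. The truncation of the index set from all of $\N$ to $I$ via the vanishing of $P_m^k$ for $k>m$ is the one place where it is worth an explicit sentence, since otherwise the equality of index sets on the two sides is not literally tautological.
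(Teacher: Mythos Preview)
Your proposal is correct and follows the same approach as the paper: induct on $|X|$ using (\ref{eq_sum_prd}) to obtain the $P$-identity, then deduce the $w$-identity by passing to the class in $IC^d$. Your write-up is in fact more careful than the paper's, which compresses the argument to a single sentence; your remarks on the truncation of $\gamma$ to $\{0,\dots,m\}$ and on why the class of $P_m^\gamma$ is $w_\gamma$ make explicit points the paper leaves implicit.
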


\begin{proof}
  The computation of $P_m^d(\sum_{i\in X}q_i)$ is by induction on
  the formula (\ref{eq_sum_prd}), and that of $w_d(\sum_{i\in X}q_i)$
  follows.
\end{proof}

\subsection{Invariants of similarity classes}

We now turn to invariants of $\Quad_n/\sim$ and $(\Quad_m)^X/\sim$.
According to Lemma \ref{lem_inv_even}, they are identified with
the subalgebras of even invariants of $\Quad_r$ and $(\Quad_m)^X$
respectively.

We also know that for Witt invariants, the even parts of a system
of generators of $IW(F)$ form a system of generators of $IW(F)_0$
for any functor $F$. This leads to:

\begin{defi}
  We define $Q_n^d\in IW(\Quad/\sim)$ as the even part of $P_n^d\in IW(\Quad_n)$,
  and $Q_m^\gamma\in IW((\Quad_m)^X/\sim)$ as the even part of $P_m^\gamma\in IW((\Quad_m)^X)$,
  for any $\gamma:X\to \{0,\dots,m\}$.
\end{defi}

\begin{rem}
  Be aware that $Q_m^\gamma$ is \emph{not} obtained from the $Q_m^d$ through the process
  described in (\ref{eq_def_alpha_gamma}).
\end{rem}

\begin{lem}\label{lem_basis_gamma_even}
  With the same hypotheses as in Lemma \ref{lem_basis_gamma}, let us
  furthermore assume that there is a function $f:I\to \Z/2\Z$ such that
  $\alpha_d\in IW(\Quad_m)_{f(d)}$ for all $d\in I$ (so the $\alpha_d$
  form a graded basis for the $\Z/2\Z$-grading on $IW(\Quad_m)$).

  For each $\gamma:X\to I$ we write $\pi(\gamma)=\sum_{i\in X}f(\gamma(i))\in \Z/2\Z$.
  Then the $\alpha_\gamma$ are a graded basis of $IW((\Quad_m)^X)$ in the sense
  that $\alpha_\gamma\in IW((\Quad_m)^X)_{\pi(\gamma)}$. In particular, the
  $\alpha_\gamma$ with $\pi(\gamma)=0$ are a basis of $IW((\Quad_m)^X)_0$.   
\end{lem}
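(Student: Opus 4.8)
The plan is to combine Lemma \ref{lem_basis_gamma} with the $\Z/2\Z$-grading of Proposition \ref{prop_even_part_witt}, and the fact that the product of even/odd invariants follows the usual sign rule. First I would invoke Lemma \ref{lem_basis_gamma} directly: since the $\alpha_d$ form a basis of $IW(\Quad_m)$ that remains a basis over any extension, the family $(\alpha_\gamma)_{\gamma:X\to I}$ is a $W(K)$-basis of $IW((\Quad_m)^X)$. So the only thing left is to identify the $\Z/2\Z$-degree of each $\alpha_\gamma$.

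Next I would compute that degree. By construction $\alpha_\gamma((q_i)_{i\in X})=\prod_{i\in X}\alpha_{\gamma(i)}(q_i)$, and each factor $q_i\mapsto \alpha_{\gamma(i)}(q_i)$, viewed as an invariant of $(\Quad_m)^X$ pulled back along the $i$-th projection, is homogeneous of $\Z/2\Z$-degree $f(\gamma(i))$ for the grading of Proposition \ref{prop_even_part_witt} — indeed, scaling $(q_i)_{i\in X}$ by $\fdiag{\lambda}$ scales each $q_i$ by $\fdiag{\lambda}$, and $\alpha_{\gamma(i)}(\fdiag{\lambda}q_i)=\alpha_{\gamma(i)}(q_i)$ or $\fdiag{\lambda}\alpha_{\gamma(i)}(q_i)$ according as $f(\gamma(i))=0$ or $1$. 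Since $IW((\Quad_m)^X)$ is a $\Z/2\Z$-graded $W(K)$-algebra (Proposition \ref{prop_even_part_witt} applied to $F=(\Quad_m)^X$), a product of homogeneous elements is homogeneous with degree the sum of the degrees, hence $\alpha_\gamma\in IW((\Quad_m)^X)_{\pi(\gamma)}$ with $\pi(\gamma)=\sum_{i\in X}f(\gamma(i))$.

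Finally I would conclude the ``in particular'' statement. We now have a $W(K)$-basis $(\alpha_\gamma)$ of $IW((\Quad_m)^X)$ each of whose members is homogeneous, with $\alpha_\gamma$ of degree $0$ exactly when $\pi(\gamma)=0$. For a graded module with a homogeneous basis, the degree-$0$ part is freely spanned by the degree-$0$ basis elements: any $\alpha\in IW((\Quad_m)^X)_0$ expands uniquely as $\sum_\gamma c_\gamma\alpha_\gamma$, and projecting onto the degree-$0$ component (which fixes $\alpha$) kills every term with $\pi(\gamma)=1$, so $\alpha=\sum_{\pi(\gamma)=0}c_\gamma\alpha_\gamma$. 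Thus the $\alpha_\gamma$ with $\pi(\gamma)=0$ form a $W(K)$-basis of $IW((\Quad_m)^X)_0\cong IW((\Quad_m)^X/\sim)$.

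The proof is essentially bookkeeping; the only point requiring a little care is the homogeneity of each factor $q_i\mapsto\alpha_{\gamma(i)}(q_i)$ as an invariant of the \emph{product} functor, which I expect to be the main (minor) obstacle — one must be clear that the $G$-action on $(\Quad_m)^X$ is the diagonal one, so that scaling by $\fdiag{\lambda}$ acts on each coordinate simultaneously, and hence the pullback of an odd (resp.\ even) invariant of $\Quad_m$ along a projection is odd (resp.\ even) on $(\Quad_m)^X$.
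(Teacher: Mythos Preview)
Your proof is correct and follows essentially the same approach as the paper: invoke Lemma~\ref{lem_basis_gamma} for the basis statement, then use that $IW((\Quad_m)^X)$ is a $\Z/2\Z$-graded algebra (Proposition~\ref{prop_even_part_witt}) together with the definition of $\alpha_\gamma$ as a product to read off its parity. Your added remark about the diagonal $G$-action making pullbacks of even/odd invariants even/odd is a helpful clarification that the paper leaves implicit.
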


\begin{proof}
  We already know from Lemma \ref{lem_basis_gamma} that the $\alpha_\gamma$
  form  a basis, and the parity of $\alpha_\gamma$ is immediate from
  the fact that $IW((\Quad_m)^X)$ is a $Z/2\Z$-graded algebra, and
  the definition of $\alpha_\gamma$ as a product.
\end{proof}

\begin{prop}\label{prop_iw_sim}
  We have $Q_n^d\in IW^{\pgq d-1}(\Quad_n/\sim)$, and $IW(\Quad_n/\sim)$
  is free with strong basis the $Q_n^d$ with $d\in \{0,\dots,n\}$ which is even.
  Moreover, all invariants in $IC(\Quad_n/\sim)$ are liftable.

  Let $I=\{0,\dots,m\}$. For any $\gamma: X\to I$, $Q_m^\gamma\in
  IW^{\pgq |\gamma|-1}((\Quad_m)^X/\sim)$,
  and $IW((\Quad_m)^X/\sim)$ is free with strong basis the $Q_m^\gamma$ with
  $\gamma:X\to I$ such that $|\gamma|$ is even.
\end{prop}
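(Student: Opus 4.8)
The plan is to reduce everything to the already-established structure theorems for $IW(\Quad_n)$ and $IW((\Quad_m)^X)$ together with the $\Z/2\Z$-grading of Proposition~\ref{prop_even_part_witt} and the degree bound of Proposition~\ref{prop_deg_even_inv}. First I would record the degree statements: since $P_n^d\in IW^{\pgq d}(\Quad_n)$ by Proposition~\ref{prop_prd}, Proposition~\ref{prop_deg_even_inv} applied to $\beta=P_n^d$ shows that its even part $Q_n^d$ lies in $IW^{\pgq d-1}(\Quad_n/\sim)$ (identifying $IW(\Quad_n/\sim)$ with $IW(\Quad_n)_0$ via Lemma~\ref{lem_inv_even}); the same argument applied to $\beta=P_m^\gamma\in IW^{\pgq|\gamma|}((\Quad_m)^X)$ gives $Q_m^\gamma\in IW^{\pgq|\gamma|-1}((\Quad_m)^X/\sim)$.

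Next I would establish the basis statements. For $IW(\Quad_n/\sim)$: Serre's theorem gives that $(P_n^d)_{0\ppq d\ppq n}$ is a $W(K)$-basis of $IW(\Quad_n)$, and from the example after Definition~\ref{def_even_odd} each $P_n^d$ (being a $W(K)$-combination of $\lambda^k$ with the same parity as $d$, or more directly by the triangularity in~\eqref{eq_prd_lambda}) is homogeneous of degree $d\bmod 2$ for the $\Z/2\Z$-grading $IW(\Quad_n)=IW(\Quad_n)_0\oplus IW(\Quad_n)_1$. Hence by Proposition~\ref{prop_even_part_witt} the decomposition of a general invariant into homogeneous parts splits the basis: the $P_n^d$ with $d$ even form a $W(K)$-basis of $IW(\Quad_n)_0=IW(\Quad_n/\sim)$, and since $Q_n^d=P_n^d$ when $d$ is even (the even part of an even element is itself), these $Q_n^d$ are the asserted basis. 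The liftability of all invariants in $IC(\Quad_n/\sim)$ then follows: an even cohomological invariant of degree $d-1$ lifts to some $\beta\in IW^{\pgq d-1}(\Quad_n)$ by Proposition~\ref{prop_ic_quad}; replacing $\beta$ by its even part $\beta_0$, which by Proposition~\ref{prop_deg_even_inv} still lies in $IW^{\pgq d-1}$ and is a lift (up to the sign/parity issue handled there, using that $\beta_0\equiv\beta_1$ mod higher filtration and $IC$ is $2$-torsion), gives a lift inside $IW(\Quad_n/\sim)$.

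For $IW((\Quad_m)^X/\sim)$ the argument is the same but routes through Lemma~\ref{lem_basis_gamma_even}: take $I=\{0,\dots,m\}$ with $f:I\to\Z/2\Z$ the parity function, so that $P_m^d\in IW(\Quad_m)_{f(d)}$ exactly as above; Lemma~\ref{lem_basis_gamma_even} then tells us the $P_m^\gamma$ form a graded basis with $P_m^\gamma\in IW((\Quad_m)^X)_{\pi(\gamma)}$ where $\pi(\gamma)=|\gamma|\bmod 2$, and that the $P_m^\gamma$ with $|\gamma|$ even form a $W(K)$-basis of $IW((\Quad_m)^X)_0=IW((\Quad_m)^X/\sim)$. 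Again $Q_m^\gamma=P_m^\gamma$ precisely when $|\gamma|$ is even, which yields the stated basis.

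The only genuinely delicate point is bookkeeping around parity rather than anything deep: one must be careful that "even part of $P_n^d$" equals $P_n^d$ when $d$ is even (so that the $Q$'s with even index literally are the relevant $P$'s and hence a basis), and that the degree drop by one in Proposition~\ref{prop_deg_even_inv} is not lossy here — i.e. that $Q_n^d$ genuinely sits in filtration $d-1$ and not accidentally higher, which is what makes the filtration of $IW(\Quad_n/\sim)$ the expected one. I expect this to be immediate from~\eqref{eq_prd_lambda} and the explicit formula in Proposition~\ref{prop_prd}, so the proof should be short, essentially a citation of Proposition~\ref{prop_even_part_witt}, Proposition~\ref{prop_deg_even_inv}, Proposition~\ref{prop_ic_quad}, and Lemma~\ref{lem_basis_gamma_even}.
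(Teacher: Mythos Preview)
Your proposal has a genuine error at its core: you claim that $P_n^d$ is homogeneous of degree $d\bmod 2$ for the $\Z/2\Z$-grading, so that $Q_n^d=P_n^d$ when $d$ is even. This is false. From~\eqref{eq_prd_lambda}, $P_r^d=\sum_{k=0}^d(-1)^k\binom{r-k}{d-k}\lambda^k$ involves $\lambda^k$ of \emph{both} parities; for instance $P_2^2=1-\lambda^1+\lambda^2$ has an odd term. The paper notes this explicitly: one ``cannot directly apply Lemma~\ref{lem_basis_gamma_even} to $\alpha_d=P_n^d$, as they are neither even nor odd''. The correct argument applies Lemma~\ref{lem_basis_gamma_even} to $\alpha_d=\lambda^d$ (which \emph{are} homogeneous), obtaining that the $\lambda^d$ with $d$ even (resp.\ the $\lambda_\gamma$ with $|\gamma|$ even) form a basis of the even part, and then observes that the $Q_n^d$ (resp.\ $Q_m^\gamma$) with even index are a \emph{triangular} family in this basis, since the leading term $\lambda^d$ of $P_n^d$ survives in $Q_n^d$ when $d$ is even.

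Your liftability argument is also flawed. If $\beta\in IW^{\pgq d-1}(\Quad_n)$ lifts an even $\alpha\in IC^{d-1}$, Proposition~\ref{prop_deg_even_inv} only gives $\beta_0\in IW^{\pgq d-2}$, not $IW^{\pgq d-1}$; the class of $\beta_0$ in $IC^{d-2}$ equals that of $\beta_1$ and need not vanish. Remark~\ref{rem_lift} warns precisely against assuming that a liftable even cohomological invariant has an \emph{even} Witt lift. The paper instead invokes explicit results from \cite{G3}: there is a basis $(v_d)$ of $IC(\Quad_n)$ such that the even invariants are generated by the $v_d$ with $d$ odd, and each such $v_d$ equals $\tld{v_{d+1}}$, whence by Proposition~\ref{prop_deg_even_inv} it admits an even lift.
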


\begin{proof}
  The fact that $Q_n^d$ is in $IW^{\pgq d-1}$ is a direct consequence of
  Proposition \ref{prop_deg_even_inv}, as $P_n^d\in IW^{\pgq d}$. Since
  the $\lambda^d$ are even/odd according to the parity of $d$, the $\lambda^d$
  with $d$ even form a basis of $IW(\Quad_n/\sim)$, and actually a strong
  basis since this holds over any $L/K$. Now $P_n^d$ is the sum of
  $\lambda^d$ plus combinations of $\lambda^k$ with $k<d$, so when $d$ is
  even $\lambda^d$ is also the leading term of $Q_n^d$, which means that the
  family $(Q_n^d)_{d \text{ even}}$ is triangular in the basis $(\lambda^d)_{d \text{ even}}$,
  so it is also a basis.

  The fact that invariants in $IC(\Quad_n/\sim)$ are liftable follows from
  the discussion in Remark \ref{rem_lift}: all invariants in $IC(\Quad_n)$ are
  liftable, so it is enough to show that all even invariants in $IC(\Quad_n)$ are
  of the form $\tld{\alpha}$ for some $\alpha$. This follows from explicit
  basis computations in \cite{G3}: there is a basis $v_d$ of $IC(\Quad_n)$ such
  that $IC(\Quad_n/\sim)$ is the submodule generated by the $v_d$ with $d$ odd
  \cite[Rem 9.13]{G3}, and we have $\tld{v_{d+1}}=v_d$ when $d$ is odd
  \cite[Prop 7.6]{G3}.
  
  To show the statement regarding the $Q_n^\gamma$, we cannot directly apply Lemma
  \ref{lem_basis_gamma_even} to $\alpha_d=P_n^d$, as they are neither even nor odd,
  but we rather apply it to $\alpha_d=\lambda^d$. Then again, for the same reason as
  for the $Q_n^d$, the $Q_n^\gamma$ with $|\gamma|$ even form a triangular family in
  the basis $(\alpha_\gamma)_{|\gamma| \text{ even}}$. As this holds over any $L/K$
  it is a strong basis.
\end{proof}

\begin{prop}\label{prop_decom_qrd_gamma}
  We have for any $L/K$ and any $(q_i)_{i\in X}\in (\Quad_m(L))^X$:
  \[ Q_{rm}^d(\sum_{i\in X}q_i) = \sum_{|\gamma|=d}Q_m^\gamma((q_i)_{i\in X}). \]
  In other words, the image of $Q_{rm}^d$ through the canonical map
  $IW(\Quad_{rm}/\sim)\to IW((\Quad_m)^X/\sim)$ is $\sum_{|\gamma|=d}Q_m^\gamma$.
\end{prop}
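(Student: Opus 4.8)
The plan is to deduce the formula by applying the ``take the even part'' projection to Proposition~\ref{prop_decom_prd_gamma}. The key point to check is that this projection is compatible with the canonical maps, which boils down to the fact that the summing map $(q_i)_{i\in X}\mapsto\sum_i q_i$ is $G$-equivariant. First I would record the following general fact: if $\phi\colon F\to F'$ is a $G$-equivariant morphism of functors with $G$-action, then the induced map $\phi^*\colon IW(F')\to IW(F)$ respects the $\Z/2\Z$-grading of Proposition~\ref{prop_even_part_witt}. Indeed, writing $\alpha=\alpha_0+\alpha_1$ with $\alpha_0$ even and $\alpha_1$ odd, for $x\in F(L)$ and $\lambda\in G(L)$ one has $(\phi^*\alpha_0)(\lambda\cdot x)=\alpha_0(\phi(\lambda\cdot x))=\alpha_0(\lambda\cdot\phi(x))=\alpha_0(\phi(x))=(\phi^*\alpha_0)(x)$, and likewise $(\phi^*\alpha_1)(\lambda\cdot x)=\fdiag{\lambda}(\phi^*\alpha_1)(x)$; so $\phi^*\alpha_0$ is even, $\phi^*\alpha_1$ is odd, and by the uniqueness in Proposition~\ref{prop_even_part_witt} we get $(\phi^*\alpha)_0=\phi^*(\alpha_0)$. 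In terms of Lemma~\ref{lem_inv_even}, this says that $\phi^*$ carries the subalgebra $IW(F'/G)$ into $IW(F/G)$ and restricts there to the map $\bar\phi^*$ induced by the descended morphism $\bar\phi\colon F/G\to F'/G$, i.e.\ to the canonical map on even invariants.

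Next I would apply this to $\phi\colon (\Quad_m)^X\to \Quad_{rm}$, $(q_i)_{i\in X}\mapsto\sum_{i\in X}q_i$. This map is $G$-equivariant for the diagonal $G$-action on the source and the standard one on the target, since $\sum_i\fdiag{\lambda}q_i=\fdiag{\lambda}\sum_i q_i$, and its descent is precisely the map $(\Quad_m)^X/\sim\,\to\,\Quad_{rm}/\sim$ appearing in the statement. By Proposition~\ref{prop_decom_prd_gamma} we have $\phi^*(P_{rm}^d)=\sum_{|\gamma|=d}P_m^\gamma$ in $IW((\Quad_m)^X)$. Taking even parts, using that $\phi^*$ preserves them and the definitions $Q_{rm}^d=(P_{rm}^d)_0$ and $Q_m^\gamma=(P_m^\gamma)_0$, we obtain $\bar\phi^*(Q_{rm}^d)=(\phi^*P_{rm}^d)_0=\sum_{|\gamma|=d}(P_m^\gamma)_0=\sum_{|\gamma|=d}Q_m^\gamma$ in $IW((\Quad_m)^X/\sim)$. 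Evaluating this identity of invariants at an arbitrary $L$-point $(q_i)_{i\in X}$ gives the displayed equality.

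I do not expect any genuine obstacle: once the $\Z/2\Z$-grading compatibility of $\phi^*$ is in place, the statement is just Proposition~\ref{prop_decom_prd_gamma} pushed through the projection onto even invariants. The only mild care needed is the naturality of that projection along $G$-equivariant morphisms, which is immediate from the uniqueness of the even/odd decomposition in Proposition~\ref{prop_even_part_witt}.
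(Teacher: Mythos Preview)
Your proposal is correct and follows exactly the paper's approach: the paper's proof is the single sentence ``This is just taking the even part on both sides of Proposition~\ref{prop_decom_prd_gamma}.'' You have simply spelled out the implicit justification that the even-part projection commutes with pullback along the $G$-equivariant summing map, which is indeed immediate from the uniqueness in Proposition~\ref{prop_even_part_witt}.
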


\begin{proof}
  This is just taking the even part on both sides of Proposition
  \ref{prop_decom_prd_gamma}.
\end{proof}

\subsection{The splitting process}

We now have basic Witt invariants $P_{rm}^d$, $Q_{rm}^d$, $P_m^\gamma$ and $Q_m^\gamma$
of our respective functors $\Quad_{rm}$, $\Quad_{rm}/\sim$, $(\Quad_m)^X$ and $(\Quad_m)^X/\sim$.
We know that they have values in $I^k$ for certain $k\in \N$ (respectively,
$k=d$, $k=d-1$, $k=|\gamma|$ and $k=|\gamma|-1$), which means that their
values can be written as sums of $k$-fold Pfister forms in each case, and we would
like to understand what these Pfister forms look like given the combinatorics
of our quadratic forms.

As we explained, the idea of studying invariants of $(\Quad_m)^X$ and $(\Quad_m)^X/\sim$
is that they are stepping stones to the case of hermitian forms over algebras whose
index divides $m$. Now suppose we are studying hermitian forms over such algebras,
but we want to understand how to specialize to algebras that actually have a smaller
index, say dividing $m'$ with $m'=rm$. Then each $m$-dimensional form can be
further decomposed as a sum of $m'$-dimensional forms indexed by some set $Y$ with
$r$ elements. The case where $m'=1$ corresponds to working with split algebras, and
fully diagonalizing each quadratic form. In general, the situation can be modeled
with the canonical morphism
\[ (\Quad_m)^{X\times Y}\to (\Quad_{m|Y|})^X \]
for some finite sets $X$ and $Y$, which is defined by
\[ (q_{i,j})_{i\in X,j\in Y}\mapsto (\sum_{j\in Y}q_{i,j})_{i\in X}. \]

This induces an injection
\[ IW((\Quad_{m|Y|})^X)\to IW((\Quad_m)^{X\times Y}), \]
and one can ask how the basic invariant decompose.
For any $\omega: X\times Y\to \N$, we define $\omega_X:X\to \N$ as
\[ \omega_X(i)= \sum_{j\in Y}\omega(i,j). \]
It satisfies $|\omega_X|=|\omega|$.

\begin{prop}\label{prop_prd_gamma_omega}
  Let $\gamma: X\to \N$. The image of $P_{m|Y|}^\gamma$ in
  $IW((\Quad_m)^{X\times Y})$ is
  \[ \sum_{\omega: X\times Y\to \N, \omega_X=\gamma} P_m^\omega. \]
\end{prop}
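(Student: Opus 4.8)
The plan is to reduce the statement to the two-variable product formula \eqref{eq_def_alpha_gamma} combined with the single-factor decomposition already proved in Proposition \ref{prop_decom_prd_gamma}. Recall that by definition $P_{m|Y|}^\gamma((Q_i)_{i\in X}) = \prod_{i\in X}P_{m|Y|}^{\gamma(i)}(Q_i)$, and that the canonical map $(\Quad_m)^{X\times Y}\to (\Quad_{m|Y|})^X$ sends a family $(q_{i,j})$ to $(Q_i)_{i\in X}$ with $Q_i = \sum_{j\in Y}q_{i,j}$. So computing the image of $P_{m|Y|}^\gamma$ amounts to substituting $Q_i = \sum_{j\in Y}q_{i,j}$ into each factor and expanding.

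The key step is then Proposition \ref{prop_decom_prd_gamma} (or rather its specialization with $X$ replaced by $Y$ and $rm$ replaced by $m|Y|$): it gives
\[
  P_{m|Y|}^{\gamma(i)}\!\left(\sum_{j\in Y}q_{i,j}\right) = \sum_{\delta_i: Y\to\{0,\dots,m\},\ |\delta_i|=\gamma(i)} P_m^{\delta_i}\big((q_{i,j})_{j\in Y}\big) = \sum_{|\delta_i|=\gamma(i)} \prod_{j\in Y} P_m^{\delta_i(j)}(q_{i,j}).
\]
Substituting this into $\prod_{i\in X}P_{m|Y|}^{\gamma(i)}(Q_i)$ and distributing the product over the sums, a term in the expansion is indexed by a choice, for each $i\in X$, of a function $\delta_i: Y\to\N$ with $|\delta_i|=\gamma(i)$; such a collection $(\delta_i)_{i\in X}$ is exactly the same datum as a single function $\omega: X\times Y\to\N$ given by $\omega(i,j)=\delta_i(j)$, and the condition $|\delta_i|=\gamma(i)$ for all $i$ translates precisely into $\omega_X=\gamma$. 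The corresponding term is $\prod_{i\in X}\prod_{j\in Y}P_m^{\omega(i,j)}(q_{i,j}) = P_m^\omega((q_{i,j})_{i,j})$ by \eqref{eq_pm_gamma}. This yields $\sum_{\omega_X=\gamma}P_m^\omega$, as claimed.

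I would present this either as a short direct computation as above, or (perhaps cleaner) by an induction on $|X|$ peeling off one element at a time, using Proposition \ref{prop_decom_prd_gamma} for the single peeled factor and the induction hypothesis for the rest; both routes are routine bookkeeping. The only mild subtlety worth a sentence is the bijection between families $(\delta_i)_{i\in X}$ of functions $Y\to\N$ and functions $\omega: X\times Y\to\N$, together with the identity $|\omega_X| = |\omega|$ already noted in the text, which guarantees that the claimed sum is finite (only $\omega$ with $|\omega|=|\gamma|$ and each $\omega_X(i)\le m$ contribute nonzero terms, since $P_m^k$ vanishes on $\Quad_m$ for $k>m$). There is no real obstacle here — the statement is a formal consequence of the product definition of the $P^\gamma$ and the already-established decomposition of a single $P_{rm}^d$ under a sum.
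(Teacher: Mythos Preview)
Your proof is correct and follows essentially the same route as the paper: expand $P_{m|Y|}^\gamma$ as a product over $i\in X$, apply Proposition~\ref{prop_decom_prd_gamma} to each factor, distribute, and use the bijection between families $(\delta_i)_{i\in X}$ of functions $Y\to\N$ and functions $\omega:X\times Y\to\N$ (which the paper phrases as the currying correspondence $X\to(Y\to\N)\cong X\times Y\to\N$). Your remark about which $\omega$ give nonzero contributions is also exactly what the paper notes immediately after the proof.
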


\begin{proof}
  Let $q_{i,j}\in \Quad_m(L)$ for every $(i,j)\in X\times Y$, for some $L/K$.
  Then
  \begin{align*}
    P_{m|Y|}^\gamma((\sum_{j\in Y}q_{i,j})_{i\in X})
    &= \prod_{i\in X}P_m^{\gamma(i)}(\sum_{j\in Y}q_{i,j}) \\
    &= \prod_{i\in X}\sum_{f: Y\to \N, |f|=\gamma(i)}P_m^f((q_{i,j})_{j\in Y}) \\
    &= \sum_{F: X\to (Y\to \N), |F(i,-)|=\gamma(i)}\prod_{i\in X}\prod_{i\in X}P_m^{F(i)}((q_{i,j})_{j\in Y}) \\
    &= \sum_{\omega: X\times Y\to \N, \omega_X=\gamma}P_n^\omega((q_{i,j})_{(i,j)\in X\times Y})
  \end{align*}
  where we used Proposition \ref{prop_decom_prd_gamma} and the correspondence
  between $\Hom(X,\Hom(Y,\N))$ and $\Hom(X\times Y, \N)$.
\end{proof}

Note that only the $\gamma$ with image in $\{0,\dots,m|Y|\}$ yield
a non-zero $P_{m|Y|}^\gamma$, and likewise only the $\omega$ with image in
$\{0,\dots,m\}$ give non-zero contribution in the formula.

\begin{coro}
  Let $\gamma: X\to \N$. The image of $Q_{m|Y|}^\gamma$ in
  $IW((\Quad_m)^{X\times Y}/\sim)$ is
  \[ \sum_{\omega:X\times Y\to \N, \omega_X=\gamma} Q_m^\omega. \]
\end{coro}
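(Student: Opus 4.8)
The plan is to derive this corollary from Proposition \ref{prop_prd_gamma_omega} by applying the ``even part'' functor, exactly as Proposition \ref{prop_decom_qrd_gamma} was derived from Proposition \ref{prop_decom_prd_gamma}. Recall that for any functor $F$ with a $G$-action, taking even parts is additive: if $\beta = \sum_j \beta^{(j)}$ in $IW(F)$, then $\beta_0 = \sum_j \beta^{(j)}_0$, since the decomposition $IW(F) = IW(F)_0\oplus IW(F)_1$ of Proposition \ref{prop_even_part_witt} is a direct sum of $W(K)$-modules and the even-part projection is $W(K)$-linear. Moreover, the ``even part'' construction is compatible with the maps induced by $G$-equivariant morphisms of functors: the morphism $(\Quad_m)^{X\times Y}\to (\Quad_{m|Y|})^X$ intertwines the diagonal $G$-actions (multiplying all forms by $\fdiag{\lambda}$ on either side), so the induced map $IW((\Quad_{m|Y|})^X)\to IW((\Quad_m)^{X\times Y})$ sends even invariants to even invariants and commutes with taking even parts.

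First I would note that $Q_{m|Y|}^\gamma$ is by definition the even part of $P_{m|Y|}^\gamma$, and $Q_m^\omega$ is by definition the even part of $P_m^\omega$. By Proposition \ref{prop_prd_gamma_omega}, the image of $P_{m|Y|}^\gamma$ under the canonical map to $IW((\Quad_m)^{X\times Y})$ is $\sum_{\omega:\,\omega_X=\gamma} P_m^\omega$ (the sum being finite, as only $\omega$ with values in $\{0,\dots,m\}$ contribute). Applying the even-part projection to both sides, and using that this projection commutes with the canonical map (by $G$-equivariance) and is additive, the image of $Q_{m|Y|}^\gamma$ in $IW((\Quad_m)^{X\times Y}/\sim)$ is $\sum_{\omega:\,\omega_X=\gamma}(P_m^\omega)_0 = \sum_{\omega:\,\omega_X=\gamma} Q_m^\omega$. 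Finally, via the identification of $IW((\Quad_m)^{X\times Y}/\sim)$ with $IW((\Quad_m)^{X\times Y})_0$ from Lemma \ref{lem_inv_even}, this is precisely the asserted formula evaluated on any family $(q_{i,j})$.

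There is no genuine obstacle here: the only point requiring a line of justification is that the even-part operation commutes with the restriction map $IW((\Quad_{m|Y|})^X)\to IW((\Quad_m)^{X\times Y})$, which follows because that map is induced by a $G$-equivariant morphism of functors and hence respects the characterization of even invariants given in Lemma \ref{lem_inv_even} (an invariant satisfies $\alpha(\lambda\cdot x) = \alpha(x)$ before restriction iff its restriction does, on the image of the functor map; and more directly, the universal decomposition $\beta(\lambda\cdot x) = \beta_0(x) + \fdiag{\lambda}\beta_1(x)$ pulls back termwise). I would simply remark that the argument is identical to the proof of Proposition \ref{prop_decom_qrd_gamma}, namely ``take the even part on both sides of Proposition \ref{prop_prd_gamma_omega}'', and leave it at that.
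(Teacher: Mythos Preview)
Your proposal is correct and takes exactly the same approach as the paper: the paper's proof is the single line ``This is just taking the even part on both sides of Proposition \ref{prop_prd_gamma_omega}.'' Your version simply unpacks why taking even parts is additive and commutes with the $G$-equivariant restriction map, which is a reasonable elaboration of that one-liner.
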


\begin{proof}
  This is just taking the even part on both sides of Proposition \ref{prop_prd_gamma_omega}.
\end{proof}

\subsection{The case $m=1$}

When we take $m=1$, which means that we fully diagonalize the forms,
there is a nice description of the Pfister forms that intervene.
Note that in that case, our functions $\gamma:X\to \N$ that are
involved in invariants of $(\Quad_1)^X$ only take values in $\{0,1\}$,
so we can identify them with subsets $I\subset X$, using the characteristic
function $\chi_I$ of $I$. Note that $|\chi_I|=|I|$.

\begin{lem}\label{lem_p1_chi}
  Let $X$ be a finite set, and let $I\subset X$. Let  $(a_i)_{i\in X}\in (K^\times)^X$.
  We define a morphism $f: \mathcal{P}(X)\to K^\times/(K^\times)^2$ by $f(\{i\})=-a_i$.
  Then
  \begin{align*}
    P_1^{\chi_I}((\fdiag{a_i})_{i\in X}) &= \pfis{f|\mathcal{P}(I)} \\
    Q_1^{\chi_I}((\fdiag{a_i})_{i\in X}) &= \pfis{f|\mathcal{P}_0(I)}. \\
  \end{align*}
\end{lem}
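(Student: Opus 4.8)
The plan is to compute $P_1^{\chi_I}$ directly from its definition as a product, and then obtain the formula for $Q_1^{\chi_I}$ by taking even parts. First I would unwind definition~(\ref{eq_pm_gamma}): by construction
\[ P_1^{\chi_I}((\fdiag{a_i})_{i\in X}) = \prod_{i\in X}P_1^{\chi_I(i)}(\fdiag{a_i}) = \prod_{i\in I}P_1^1(\fdiag{a_i}) \]
since $\chi_I(i)=1$ for $i\in I$ and $\chi_I(i)=0$ otherwise, and $P_1^0=\lambda^0$ is the constant invariant $\fdiag{1}$ which acts as the identity for the product. Now recall from the proof of Proposition~\ref{prop_prd} that $P_1^1(\fdiag{a})=\pfis{a}$, so the product becomes $\prod_{i\in I}\pfis{a_i} = \pfis{a_i}_{i\in I}$. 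It then remains to identify this with $\pfis{f|\mathcal{P}(I)}$: this is exactly the content of the discussion following Definition~\ref{def_pfis_1}, where with $f(\{i\})=-a_i$ one has $\pfis{f|\mathcal{P}(I)} = \pfis{a_i}_{i\in I}$ (taking $W=\mathcal{P}(I)$, which is a linear subspace, with basis the singletons $\{i\}$ for $i\in I$, and noting $-a_i$ is a representative of $f(\{i\})$). This establishes the first line.

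For the second line, by Definition the invariant $Q_1^{\chi_I}$ is the even part of $P_1^{\chi_I}$ in the sense of Proposition~\ref{prop_even_part_witt}. So I would compute how $P_1^{\chi_I}$ transforms under scaling: replacing $(\fdiag{a_i})_{i\in X}$ by $(\fdiag{\mu a_i})_{i\in X}$ for $\mu\in L^\times$ multiplies $f$ by the constant square class $(-\mu)$ on each singleton, hence replaces $f$ by the morphism $f'$ with $f'(x) = f(x)\cdot (-\mu)^{|I|\cap x}$... more cleanly: the scaled value is $\pfis{\mu a_i}_{i\in I}$, and comparing $\pfis{\mu a_i}_{i\in I}$ with $\pfis{a_i}_{i\in I}$ via the standard multiplicativity of Pfister forms gives $\pfis{\mu a_i}_{i\in I} = \pfis{a_i}_{i\in I} - \pfis{\mu}\cdot(\text{even part})$, where the even part is precisely $\pfis{f|\mathcal{P}_0(I)}$ — this is Example~\ref{ex_even_part_pfister}, which identifies the even part of $\pfis{a_i}_{i\in I}$ (written over the hyperplane $\mathcal{P}_0(I)\subset\mathcal{P}(I)$) with $\pfis{f|\mathcal{P}_0(I)}$. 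Matching this against the defining relation~(\ref{eq_tld_pfis}), namely $\beta(\lambda\cdot x)=\beta(x)-\pfis{\lambda}\beta_1(x)$, identifies the odd part $(P_1^{\chi_I})_1$ with $\pfis{f|\mathcal{P}_0(I)}$, and then the even part $Q_1^{\chi_I}$ is obtained as $P_1^{\chi_I}$ minus this odd part; a short computation with the decomposition $\mathcal{P}(I)=\mathcal{P}_0(I)\sqcup(\mathcal{P}(I)\setminus\mathcal{P}_0(I))$ shows this equals $\pfis{f|\mathcal{P}_0(I)}$ as well (consistent with the fact that for a Pfister form $\phi$ of positive degree, $\phi = \pfis{c}\psi$ forces both $\phi_0$ and $\phi_1$ to have the shape of $\psi$ up to the ambient combinatorics).

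The main obstacle I anticipate is purely bookkeeping: carefully tracking which affine subspace of $\mathcal{P}(I)$ carries the even versus odd part, and making sure the sign conventions in $f(\{i\})=-a_i$ and in the Pfister symbol $\pfis{\cdot}$ line up so that the even part comes out as $\pfis{f|\mathcal{P}_0(I)}$ rather than some translate of it. Since $\mathcal{P}_0(I)$ is a linear (not merely affine) subspace, by the Proposition after Definition~\ref{def_pfis_2} we do get a genuine Pfister form of degree $|I|-1$, which is the expected degree (consistent with $Q_1^{\chi_I}\in IW^{\pgq|\chi_I|-1}$ from Proposition~\ref{prop_iw_sim}), providing a useful sanity check. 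No genuinely hard step is expected; the result is essentially a specialization of the machinery already set up, and the argument is a matter of assembling Example~\ref{ex_even_part_pfister}, the remarks after Definition~\ref{def_pfis_1}, and the even/odd decomposition of Proposition~\ref{prop_even_part_witt}.
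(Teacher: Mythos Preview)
Your argument for the first identity is exactly the paper's: expand the product using $P_1^0=1$ and $P_1^1(\fdiag{a})=\pfis{a}$, and identify $\prod_{i\in I}\pfis{a_i}$ with $\pfis{f|\mathcal{P}(I)}$.

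For the second identity your overall strategy (compute the effect of scaling and read off the even/odd decomposition) is also the paper's, but you have swapped the even and odd pieces. If you replace each $a_i$ by $\lambda a_i$, then $f(J)$ gets multiplied by $\lambda^{|J|}$, so
\[
  P_1^{\chi_I}((\fdiag{\lambda a_i})_{i\in X})
  = \pfis{f|\mathcal{P}_0(I)} + \fdiag{\lambda}\,\pfis{f|\mathcal{P}_1(I)}.
\]
Comparing with $\beta(\lambda\cdot x)=\beta_0(x)+\fdiag{\lambda}\beta_1(x)$ (equivalently with (\ref{eq_tld_pfis})) shows that the \emph{odd} part is $\pfis{f|\mathcal{P}_1(I)}$ and the \emph{even} part is $\pfis{f|\mathcal{P}_0(I)}$, not the other way round. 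Your claimed identity $\pfis{\mu a_i}_{i\in I}=\pfis{a_i}_{i\in I}-\pfis{\mu}\cdot\pfis{f|\mathcal{P}_0(I)}$ is therefore off: the correction term is $\pfis{f|\mathcal{P}_1(I)}$. With your (incorrect) odd part, the subtraction $\pfis{f|\mathcal{P}(I)}-\pfis{f|\mathcal{P}_0(I)}$ yields $\pfis{f|\mathcal{P}_1(I)}$, not $\pfis{f|\mathcal{P}_0(I)}$, so the ``short computation'' you invoke cannot give what you assert. (Note that $\pfis{f|\mathcal{P}_1(I)}=\fdiag{-a_{i_0}}\pfis{f|\mathcal{P}_0(I)}$ for any $i_0\in I$, so the two are genuinely different in general.)

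This is exactly the bookkeeping hazard you anticipated. The paper sidesteps it by writing the scaled value directly as a sum over $\mathcal{P}_0(I)$ and $\mathcal{P}_1(I)$ and reading off the even part, rather than going through (\ref{eq_tld_pfis}); I recommend you do the same.
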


\begin{proof}
  The first equality simply comes from the fact that $P_1^0(\fdiag{a}))=1$ and
  $P_1^1(\fdiag{a})=\pfis{a}$, so $P_1^{\chi_I}((\fdiag{a_i})_{i\in X})=\prod_{i\in I}\pfis{a_i}$.

  For the second one, notice that if $\lambda\cdot f$ denotes the morphism
  $\mathcal{P}(X)\to K^\times/(K^\times)^2$ defined by $(\lambda\cdot f)(\{i\})=
  \lambda f(\{i\})$, then $(\lambda\cdot f)(I)=f(I)$ if $|I|$ is even,
  and $(\lambda\cdot f)(I)=\lambda f(I)$ if $|I|$ is odd. So
  \[ P_1^{\chi_I}((\fdiag{\lambda a_i})_{i\in X})
    = \pfis{f|\mathcal{P}_0(I)}+\fdiag{\lambda}\pfis{f|\mathcal{P}_1(I)}. \]
  Then by definition of the even part of a Witt invariant we get the formula for $Q_1^{\chi_I}$.
\end{proof}

Now if we want to apply that to $(\Quad_1)^{X\times Y}\to (\Quad_{|Y|})^X$,
we notice that when $\omega:X\times Y\to \N$ is $\chi_I$ for some
$I\subset X\times Y$, then $\omega_X:X\to \N$ is the function $\theta_I$ defined by
\[ \theta_I(i) = |\pi^{-1}(\{i\})\cap I| \]
where $\pi:X\times Y\to X$ is the canonical projection. In particular,
$|\theta_I|=|I|$.

\begin{prop}\label{prop_pq_split}
  Let $(q_i)_{i\in X}\in (\Quad_{|Y|}(K))^X$, and let $q=\sum_{i\in X}q_i$. For each $i\in X$,
  we diagonalize $q_i$ as $\fdiag{a_{i,j}}_{j\in Y}$. Let us define
  $f: \mathcal{P}(X\times Y)\to K^\times/(K^\times)^2$ by $f(\{(i,j)\}) = -a_{i,j}$. Then:
  \begin{align*}
    P_{|X||Y|}^d(q) &= \sum_{|I|=d}\pfis{f|\mathcal{P}(I)} \\
    Q_{|X||Y|}^d(q) &= \sum_{|I|=d}\pfis{f|\mathcal{P}_0(I)} \\
    P_{|Y|}^\gamma((q_i)_{i\in X}) &= \sum_{\theta_I=\gamma}\pfis{f|\mathcal{P}(I)} \\
    Q_{|Y|}^\gamma((q_i)_{i\in X}) &= \sum_{\theta_I=\gamma}\pfis{f|\mathcal{P}_0(I)}.
  \end{align*}
\end{prop}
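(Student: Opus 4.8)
The plan is to derive all four formulas from the combinatorial decomposition results already established, by combining them with the explicit Pfister-form description in the fully-split case. First I would observe that the first two formulas (for $P_{|X||Y|}^d$ and $Q_{|X||Y|}^d$) are just instances of Proposition~\ref{prop_pq_split} itself when we view $q$ as living in $\Quad_{|X||Y|}$ and diagonalize it completely: indeed, Proposition~\ref{prop_prd} gives $P_r^d(\fdiag{a_i}_{i\in X})=\sum_{|I|=d}\pfis{f|\mathcal{P}(I)}$ for any finite index set and any linear map $f$, so applying it with index set $X\times Y$ and the family $(a_{i,j})$ yields the first formula directly; the second follows by taking even parts, using Lemma~\ref{lem_p1_chi} (or rather the computation of $Q_1^{\chi_I}$ there) to identify the even part of $\pfis{f|\mathcal{P}(I)}$ as $\pfis{f|\mathcal{P}_0(I)}$.

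For the last two formulas, the strategy is to chain Proposition~\ref{prop_prd_gamma_omega} with Lemma~\ref{lem_p1_chi}. Apply Proposition~\ref{prop_prd_gamma_omega} in the case $m=1$ (so $m|Y|=|Y|$, and the set called $Y$ there plays the role of our $Y$, while $X$ stays $X$): the image of $P_{|Y|}^\gamma$ in $IW((\Quad_1)^{X\times Y})$ is $\sum_{\omega:X\times Y\to\N,\ \omega_X=\gamma}P_1^\omega$. Now restrict to the diagonalized situation: each $P_1^\omega$ is nonzero only when $\omega$ takes values in $\{0,1\}$, i.e.\ $\omega=\chi_I$ for some $I\subset X\times Y$, and by Lemma~\ref{lem_p1_chi} we have $P_1^{\chi_I}((\fdiag{a_{i,j}}))=\pfis{f|\mathcal{P}(I)}$. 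The condition $\omega_X=\gamma$ becomes exactly $\theta_I=\gamma$, by the identity $\omega_X=\theta_I$ recorded just before the proposition. Summing gives the third formula; the fourth is again obtained by taking even parts on both sides, using the second formula of Lemma~\ref{lem_p1_chi} to get $Q_1^{\chi_I}((\fdiag{a_{i,j}}))=\pfis{f|\mathcal{P}_0(I)}$.

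There is no serious obstacle here: the proof is essentially bookkeeping, assembling Propositions~\ref{prop_prd}, \ref{prop_prd_gamma_omega}, the corollary on $Q_{m|Y|}^\gamma$, and Lemma~\ref{lem_p1_chi}. The one point requiring a little care is keeping the two reduction steps straight — the passage $\Quad_{|X||Y|}\to(\Quad_{|Y|})^X$ versus $(\Quad_{|Y|})^X\to(\Quad_1)^{X\times Y}$ — and checking that the various index-function identities ($|\omega_X|=|\omega|$, $|\theta_I|=|I|$, and $\omega_X=\theta_I$ when $\omega=\chi_I$) line up so that the summation conditions $|I|=d$ and $\theta_I=\gamma$ are the correct ones. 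I would also remark, as the paper does elsewhere, that only the $I$ with $\theta_I$ taking values in $\{0,\dots,|Y|\}$ contribute, which is automatic here since $\theta_I(i)=|\pi^{-1}(\{i\})\cap I|\leq|Y|$. Finally I would note that everything is compatible with the even-part operation because taking even parts of Witt invariants commutes with the canonical maps between functors and with sums, so the $Q$-formulas follow formally from the $P$-formulas once Lemma~\ref{lem_p1_chi} supplies the identification of even parts of the relevant Pfister forms.
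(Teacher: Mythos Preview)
Your proposal is correct and matches the paper's argument for the last two formulas exactly: apply Proposition~\ref{prop_prd_gamma_omega} with $m=1$, identify each $\omega$ with $\chi_I$, translate $\omega_X=\gamma$ into $\theta_I=\gamma$, and plug in Lemma~\ref{lem_p1_chi}. The only cosmetic difference is in the first two formulas: the paper obtains them as the special case $X'=\{*\}$, $Y'=X\times Y$ of the last two, whereas you invoke Proposition~\ref{prop_prd} directly for $P_{|X||Y|}^d$ and then take even parts; these are essentially the same computation.
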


\begin{proof}
  For the last two equations, we use Proposition \ref{prop_prd_gamma_omega} with $m=1$,
  and the fact that each function $\omega:X\times Y\to \{0,1\}$ can be written as $\chi_I$
  with $I\subset I\times Y$, with $\omega_X=\gamma$ equivalent to $\theta_I=\gamma$.
  We conclude with each $P_1^{\chi_I}$ (resp. $Q_1^{\chi_I}$) replaced by the formula given
  by Lemma \ref{lem_p1_chi}.

  The first two equations are special cases, using $X'=\{*\}$ and $Y'=X\times Y$,
  and identifying a function $\gamma: X'\to \N$ with its value $d$.
\end{proof}

\subsection{The case $m=2$}\label{sec_m_2}

In general, we would like to get a good combinatorial description of
the Pfister forms involved in $P_m^\gamma$ and $Q_m^\gamma$, without
diagonalizing the $m$-dimensional forms (unlike in Proposition \ref{prop_pq_split}),
but we do not yet have one to offer. On the other hand, we give a satisfying
answer when $m=2$, which might not generalize to higher $m$.

Let $q\in \Quad_2(K)$. Instead of diagonalizing it, let us write it as
\[ q = \fdiag{t}\pfis{\delta} \]
where $\delta=\det(q)\in K^\times/(K^\times)^2$ is well-defined, and
$t\in K^\times$ is only well-defined modulo $G(\pfis{\delta})$. Of course
if $q=\fdiag{a,b}$ then $\delta=ab$ and we can take $t=a$ or $t=b$.

Then if $(q_i)_{i\in X}\in (\Quad_2(K))^X$ for some finite $X$, we can
define $\delta_i\in K^\times/(K^\times)^2$ and $t_i\in K^\times/G(\pfis{\delta_i})$
for all $i\in X$. This extends naturally to group morphisms
\[ \foncdef{\delta}{\mathcal{P}(X)}{K^\times/(K^\times)^2}{\{i\}}{-\delta_i} \]
and for any $Y\subset X$:
\[ \foncdef{t}{\mathcal{P}(Y)}{K^\times/G(\pfis{\delta|\mathcal{P}(Y)})}{\{i\}}{-t_i} \]
(note that since $t_i$ is well-defined modulo $G(\pfis{\delta_i})$, for any $I\subset Y$
we get that $t(I)$ is well-defined modulo $G(\pfis{\delta|\mathcal{P}(I)})$, and in particular
modulo $G(\pfis{\delta|\mathcal{P}(Y)})$).

We see, given the definition, that for any $I\subset X$:
\begin{equation}\label{eq_prod_q}
  \prod_{i\in I}q_i = \fdiag{(-1)^{|I|}t(I)}\pfis{\delta|\mathcal{P}(I)}.
\end{equation}

If $V$ is an affine subspace of some vector space, we write $\overrightarrow{V}$
for its direction (which is the unique vector subspace parallel to $V$).

\begin{defi}\label{def_psi_quad}
  Let $U,V\subset \mathcal{P}(X)$ be affine subspaces with $\overrightarrow{V}\subset
  \overrightarrow{U}$. Then we define $\Psi^{V,U}, \Psi_0^{V,U}\in IW((\Quad_2)^X)$ by
  \[ \Psi^{V,U}((q_i)_{i\in X}) = \pfis{t|V; \delta|U} \]
  and
  \[ \Psi_0^{V,U} = \Psi^{V\cap \mathcal{P}_0(X),U} \]

  Furthermore, if $A,J\subset X$ are disjoint, we define 
  $V_{A,J}=J+\mathcal{P}(A)$ and
  \[ \Psi^{J,A} = \Psi^{V_{J,A}, \mathcal{P}(A\cup J)},
    \quad \Psi_0^{J,A} = \Psi_0^{V_{J,A}, \mathcal{P}(A\cup J)}. \]
\end{defi}

The fact that those formulas indeed define invariants is clear since the morphisms
$\delta$ and $t$ are canonically defined from the $q_i$. The crucial observation
is that those invariants take values in general Pfister forms.

\begin{rem}
  It might happen that $V\cap \mathcal{P}_0(X)=\emptyset$, in which case $\Psi_0^{V,U}=0$.
  Otherwise, the direction of $V\cap \mathcal{P}_0(X)$ is $\overrightarrow{V}\cap \mathcal{P}_0(X)$.

  In particular, except when $A=\emptyset$ and $|J|$ is odd, the direction of
  $V_{J,A}\cap \mathcal{P}_0(X)$ is $\mathcal{P}_0(A)$. When $|J|$ is even,
  this affine space is actually $J+\mathcal{P}_0(A)$, but when $|J|$ is odd there is no
  natural basepoint.
\end{rem}

\begin{lem}
  Let $U,V\subset \mathcal{P}(X)$ be affine subspaces with $\overrightarrow{V}\subset
  \overrightarrow{U}$. Then $\Psi_0^{V,U}$ is the even part of $\Psi^{V,U}$.

  In particular, $\Psi_0^{V,U}\in IW^{\dim(U)+\dim(V)-1}((\Quad_2)^X/\sim)$.
\end{lem}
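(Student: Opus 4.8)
The goal is to show that $\Psi_0^{V,U}$, which is defined as $\Psi^{V\cap\mathcal{P}_0(X),U}$, coincides with the even part of $\Psi^{V,U}$ in the sense of Definition \ref{def_even_odd} and Proposition \ref{prop_even_part_witt}, and then to read off its minimal degree. The plan is to compute $\Psi^{V,U}((\fdiag{\lambda}q_i)_{i\in X})$ and compare it with $\Psi^{V,U}((q_i)_{i\in X})$, extracting the $\fdiag{\lambda}$-linear part. First I would note that scaling each $q_i$ by $\fdiag{\lambda}$ leaves each determinant $\delta_i$ unchanged (since $\det$ of a binary form scaled by $\fdiag{\lambda}$ multiplies by $\lambda^2$), so the morphism $\delta$ is unchanged, whereas each $t_i$ is multiplied by $\lambda$; hence the morphism $t$ is replaced by $\lambda\cdot t$ in the notation of Lemma \ref{lem_p1_chi}. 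The key computation is then exactly the one already carried out in the proof of that lemma: for an affine subspace $V$, and $x\in V$, the value $(\lambda\cdot t)(x) = t(x)$ if the corresponding subset has even cardinality and $=\lambda t(x)$ if it has odd cardinality — more precisely, $(\lambda\cdot t)(x)$ equals $t(x)$ for $x\in V\cap\mathcal{P}_0(X)$ and $\lambda t(x)$ for $x\in V\setminus\mathcal{P}_0(X)$, since $\mathcal{P}_0(X)$ is precisely the locus where the defining parity is even and $t$ is a morphism to $K^\times/G(\cdots)$ reflecting the sign bookkeeping of \eqref{eq_prod_q}.

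Carrying this through, one gets
\[ \Psi^{V,U}((\fdiag{\lambda}q_i)_{i\in X}) = \pfis{t|V\cap\mathcal{P}_0(X);\delta|U} + \fdiag{\lambda}\,\pfis{t|V\setminus\mathcal{P}_0(X);\delta|U}, \]
where the first summand is $\Psi_0^{V,U}((q_i)_{i\in X})$ by definition, and is manifestly independent of $\lambda$, while the second is of the form $\fdiag{\lambda}$ times a fixed general Pfister form. By the uniqueness statement from Serre recalled in the proof of Proposition \ref{prop_even_part_witt}, this exhibits $\Psi_0^{V,U}$ as the even part of $\Psi^{V,U}$: the $\lambda$-independent term of the decomposition $\beta(\lambda) = u + \fdiag{\lambda}v$ is exactly the value of the even part. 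One subtlety to handle carefully is the degenerate case $V\cap\mathcal{P}_0(X)=\emptyset$, where $\Psi_0^{V,U}=0$ by the Remark preceding the lemma and the whole of $\Psi^{V,U}$ is odd; and the symmetric possibility $V\subset\mathcal{P}_0(X)$, where the second summand vanishes and $\Psi^{V,U}$ is already even. Both are consistent with the claimed formula, since in those cases one of the two affine pieces $V\cap\mathcal{P}_0(X)$ or $V\setminus\mathcal{P}_0(X)$ is empty, giving a zero summand.

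For the degree statement, recall from the Proposition following Definition \ref{def_pfis_2} that $\pfis{t|V;\delta|U}$ is a general Pfister form of fold $\dim(V)+\dim(U)$ (here $\dim(U)$ counts the $\delta$-part and $\dim(V)$ the $t$-part, matching the general $(r+n)$-fold statement with $r=\dim(U)$, $n=\dim(V)$). Thus $\Psi_0^{V,U}((q_i)_{i\in X}) = \pfis{t|V\cap\mathcal{P}_0(X);\delta|U}$ is a general Pfister form of fold $\dim(V\cap\mathcal{P}_0(X))+\dim(U) = (\dim V - 1)+\dim U$, so it lands in $I^{\dim(U)+\dim(V)-1}$. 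Since this holds over every extension $L/K$, and $\Psi_0^{V,U}$ factors through similarity classes (being the even part of a Witt invariant, by Lemma \ref{lem_inv_even} and Definition \ref{def_even_odd}), we conclude $\Psi_0^{V,U}\in IW^{\pgq \dim(U)+\dim(V)-1}((\Quad_2)^X/\sim)$ as claimed. The main obstacle is purely bookkeeping: being careful that the sign factor $(-1)^{|I|}$ in \eqref{eq_prod_q} is exactly absorbed by the parity distinction between $\mathcal{P}_0$ and its complement, so that scaling by $\fdiag{\lambda}$ really does act by the clean rule above on the level of the morphism $t$ rather than introducing spurious corrections.
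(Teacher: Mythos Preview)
Your argument is correct and follows essentially the same route as the paper: both compute $\Psi^{V,U}((\fdiag{\lambda}q_i)_{i\in X})$ by observing that $\delta$ is unchanged under simultaneous scaling while $t(I)$ picks up a factor $\lambda$ exactly when $|I|$ is odd, and both arrive at the decomposition
\[
\Psi^{V,U}((\fdiag{\lambda}q_i)_{i\in X}) = \pfis{t|V\cap\mathcal{P}_0(X);\delta|U} + \fdiag{\lambda}\,\pfis{t|V\cap\mathcal{P}_1(X);\delta|U},
\]
from which the even part is read off (your $V\setminus\mathcal{P}_0(X)$ is the paper's $V\cap\mathcal{P}_1(X)$). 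The only genuine difference is in the degree statement: the paper simply invokes Proposition~\ref{prop_deg_even_inv} (the even part of an invariant in $IW^{\pgq d}$ lies in $IW^{\pgq d-1}$), whereas you compute the fold of the resulting general Pfister form directly as $\dim(V\cap\mathcal{P}_0(X))+\dim(U)$. Your route is slightly more hands-on but requires a word of care in the edge case $V\subset\mathcal{P}_0(X)$, where $\dim(V\cap\mathcal{P}_0(X))=\dim V$ rather than $\dim V-1$; the conclusion still holds there (one gets the stronger bound $\dim U+\dim V$), and you already flagged this case earlier, so the argument goes through.
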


\begin{proof}
  We see from the definition that if $\delta'$ and $t'$ are the morphisms corresponding
  to the family $(q_i')_{i\in X}$ where $q_i'=\fdiag{\lambda}q_i$, then $\delta'=\delta$,
  while $t'(I)=t(I)$ if $|I|$ is even and $t'(I)=\fdiag{\lambda}t(I)$ if $|I|$ is odd.

  This implies that
  \begin{align*}
    \Psi^{V,U}((\fdiag{\lambda}q_i)_{i\in X}) &= \pfis{t'|V; \delta'|U} \\
                     &= \pfis{t|V\cap \mathcal{P}_0(X); \delta|U}
                      +\fdiag{\lambda}\pfis{t|V\cap \mathcal{P}_1(X); \delta|U}
  \end{align*}
  which shows that $\Psi_0^{U,V}$ is the even part.

  The part about degrees follows simply from Proposition \ref{prop_deg_even_inv}.
\end{proof}

We now see that those invariants with values in general Pfister forms actually
generate all invariants.

\begin{prop}\label{prop_pq_m2}
  Let $\gamma:X\to \{0,1,2\}$, and let $A=\gamma^{-1}(\{2\})\subset X$
  and $B=\gamma^{-1}(\{1\})$. Then
  \begin{align*}
    P_2^\gamma &= \sum_{J\subset B}2^{|B\setminus J|}\Psi^{J,A} \\
    Q_2^\gamma &= \sum_{J\subset B}2^{|B\setminus J|}\Psi_0^{J,A}.                             
  \end{align*}
\end{prop}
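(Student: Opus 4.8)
The plan is to compute $P_2^\gamma((q_i)_{i\in X})$ directly by expanding the definition and matching terms against the proposed sum. Recall that $\gamma$ takes values in $\{0,1,2\}$, and write $A = \gamma^{-1}(\{2\})$, $B = \gamma^{-1}(\{1\})$ (the statement has a typo with $B$ defined as $\gamma^{-1}(\{2\})$; it must mean $\gamma^{-1}(\{1\})$). By the definition \eqref{eq_pm_gamma}, $P_2^\gamma((q_i)_{i\in X}) = \prod_{i\in X}P_2^{\gamma(i)}(q_i)$. For $i\notin A\cup B$ we have $\gamma(i)=0$ and $P_2^0(q_i)=1$, so these contribute nothing. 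For $i\in B$, $P_2^1(q_i) = \lambda^0(q_i) - \lambda^1(q_i) = 1 - q_i$ (using $P_2^1 = \lambda^0 - \lambda^1$ with $r=2$, $d=1$, so the binomial coefficient of $\lambda^0$ is $\binom{2}{1}=2$ — I need to recheck: $P_2^1 = \binom{2}{1}\lambda^0 - \binom{1}{0}\lambda^1 = 2 - q_i$). For $i\in A$, $P_2^2(q_i) = \lambda^0 - \binom{1}{1}\lambda^1 + \binom{0}{0}\lambda^2 = 1 - q_i + \det(q_i)$; using $q_i = \fdiag{t_i}\pfis{\delta_i}$ and $\lambda^2(q_i) = \fdiag{\det q_i} = \fdiag{\delta_i}$, one checks $P_2^2(q_i) = \fdiag{1,-t_i,t_i\delta_i,\delta_i}$ up to Witt equivalence — in fact $P_2^2(q_i) = \pfis{t_i,\delta_i}$ should hold since $P_2^2 = P_1^1\cdot P_1^1$ formally would give a $2$-fold Pfister form, but $q_i$ is only $2$-dimensional; I would verify that $P_2^2(q_i) = \fdiag{t_i}\pfis{\delta_i}$-related expression equals $\pfis{t_i,\delta_i}$ in $GW(K)$ by direct computation, consistent with Proposition \ref{prop_prd} giving $P_2^2$ values in $I^2$.

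The core computation then is to expand the product $\prod_{i\in B}(2 - q_i)\cdot \prod_{i\in A}P_2^2(q_i)$. Expanding the first product gives $\sum_{J\subset B} 2^{|B\setminus J|}(-1)^{|J|}\prod_{i\in J}q_i$, where the sign bookkeeping must be tracked carefully. Then I multiply by $\prod_{i\in A}P_2^2(q_i)$, and the key identity to establish is that $(-1)^{|J|}\prod_{i\in J}q_i \cdot \prod_{i\in A}P_2^2(q_i) = \Psi^{J,A}((q_i)_{i\in X})$, i.e.\ that this product equals $\pfis{t|V_{J,A}; \delta|\mathcal{P}(A\cup J)}$. By \eqref{eq_prod_q}, $\prod_{i\in J}q_i = \fdiag{(-1)^{|J|}t(J)}\pfis{\delta|\mathcal{P}(J)}$, so $(-1)^{|J|}\prod_{i\in J}q_i = \fdiag{t(J)}\pfis{\delta|\mathcal{P}(J)}$ up to sign conventions on $t$ (recall $t:\mathcal{P}(Y)\to K^\times/G$ sends $\{i\}\mapsto -t_i$, so there is a compensating sign). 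Then I need $\pfis{\delta|\mathcal{P}(J)}\cdot\prod_{i\in A}P_2^2(q_i)$ to build up to $\pfis{t|V_{J,A};\delta|\mathcal{P}(A\cup J)}$; since each $P_2^2(q_i) = \pfis{t_i,\delta_i}$ (a $2$-fold Pfister form) for $i\in A$, multiplying in these $|A|$ factors adds $|A|$ copies of a "$t$-direction" and $|A|$ copies of a "$\delta$-direction", matching the dimensions $\dim V_{J,A} = |A|$ (when $J$ contributes only through the offset) — wait, $V_{J,A} = J + \mathcal{P}(A)$ has dimension $|A|$, and $\mathcal{P}(A\cup J)$ has dimension $|A\cup J| = |A| + |J|$. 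So the bookkeeping is: the $\delta|\mathcal{P}(J)$ part contributes $|J|$ to the $\delta$-direction, the $A$-factors contribute $|A|$ more to the $\delta$-direction (total $|A|+|J|$, matching $\dim\mathcal{P}(A\cup J)$) and $|A|$ to the $t$-direction (matching $\dim V_{J,A}=|A|$), and the single $\fdiag{t(J)}$ scalar gives the affine offset $J$ in the $t$-variable. This is exactly the content of Definition \ref{def_pfis_2} of $\pfis{f|W;f'|W'}$ unwound via the inductive Pfister-form structure, so the identity should follow by the same kind of induction as in Proposition \ref{prop_prd} and the proposition after Definition \ref{def_pfis_2}.

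The second formula for $Q_2^\gamma$ follows immediately by taking even parts of both sides, using that $\Psi_0^{J,A}$ is the even part of $\Psi^{J,A}$ (the lemma just proved) and that taking even parts is $W(K)$-linear, so commutes with the integer coefficients $2^{|B\setminus J|}$.

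\textbf{Main obstacle.} The delicate step is the precise identification $(-1)^{|J|}\prod_{i\in J}q_i\cdot\prod_{i\in A}P_2^2(q_i) = \Psi^{J,A}$, which requires (a) pinning down the exact Witt-class formula for $P_2^2(q_i)$ in terms of $t_i,\delta_i$ — in particular confirming it is genuinely the $2$-fold Pfister form $\pfis{t_i,\delta_i}$ and not merely something in $I^2$ — and (b) reconciling all the sign conventions (the $-a_i$ versus $a_i$ in $f$, the $-t_i$ in the definition of the morphism $t$, and the $(-1)^{|I|}$ in \eqref{eq_prod_q}) so that the scalar factor and the affine offset land correctly, and (c) recognizing the telescoping product of Pfister forms as the object $\pfis{t|V_{J,A};\delta|\mathcal{P}(A\cup J)}$ defined coordinate-free in Definition \ref{def_pfis_2}. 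None of this is deep, but it is the kind of place where a sign error or an off-by-one in dimension would invalidate the clean statement, so I would carry out step (b) on a small case ($|A|=|J|=1$) first to fix conventions before writing the general induction.
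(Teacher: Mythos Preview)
Your approach is correct and essentially the same as the paper's: both expand $\prod_{i\in B}(2-q_i)\cdot\prod_{i\in A}P_2^2(q_i)$ and identify each $J$-summand with $\Psi^{J,A}$, then take even parts for the $Q_2^\gamma$ statement. The only difference is cosmetic: the paper writes $P_2^2(q_i)=\pfis{\delta_i}-q_i$ and expands this further as a sum over $I\subset A$, so that the sum $\sum_{I\subset A}\fdiag{t(I\cup J)}$ visibly reassembles into $\pfis{t|J+\mathcal{P}(A)}$, whereas you recognize $P_2^2(q_i)=\pfis{t_i,\delta_i}$ directly (an identity that holds in $W(K)$, which is all that is needed here) and multiply the Pfister forms without the intermediate $I$-sum; your concern (a) is therefore resolved by the paper's formula $\pfis{\delta_i}-q_i=(1-\fdiag{t_i})\pfis{\delta_i}=\pfis{t_i,\delta_i}$ in $W(K)$, and the sign bookkeeping in (b) works out exactly as you suspected.
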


\begin{proof}
  The second formula follows from the first be taking the even part on both
  sides, so we only prove the first one.

  Let $(q_i)_{i\in X}\in (\Quad_2(K))^X$.
  First observe that from the definition (\ref{eq_prd_lambda}) we get
  \begin{align*}
    P_2^0(q_i) &= 1 \\
    P_2^1(q_i) &= 2-q_i \\
    P_2^2(q_i) &= \pfis{\delta_i} - q_i.
  \end{align*}
  Therefore:
  \begin{align*}
    & P^\gamma((q_i)_{i\in X}) \\
    =& \prod_{i\in A}(\pfis{\delta_i}-q_i)\times \prod_{i\in B}(2-q_i) \\
    =& \sum_{I\subset A, J\subset B}(-1)^{|I|}\fdiag{(-1)^{|I|}t(I)}\pfis{\delta|\mathcal{P}(I)}
       \cdot \pfis{\delta|\mathcal{P}(A\setminus I)}
       \times (-1)^{|J|}\fdiag{(-1)^{|J|}t(J)}\pfis{\delta|\mathcal{P}(J)}\cdot 2^{|B\setminus J|} \\
    =& \sum_{I\subset A, J\subset B} 2^{|B\setminus J|}\fdiag{t(I\cup J)}\pfis{\delta|\mathcal{P}(A\cup J)} \\
    =&\sum_{J\subset B}2^{|B\setminus J|}\pfis{t|J+\mathcal{P}(A); \delta|\mathcal{P}(A\cup J)}. \qedhere
  \end{align*}
\end{proof}

Let us now compare the descriptions of $P_2^\gamma$ given in Propositions
\ref{prop_pq_split} and \ref{prop_pq_m2}. So we now allow ourselves to
look into each $q_i$: this gives us $f:\mathcal{P}(X\times Y)\to K^\times/(K^\times)^2$
with some $Y$ with $2$ elements, and $f$ encodes a diagonalization of each $q_i$.
Recall that $\pi:X\times Y\to X$ is the projection.

We choose a section $s: X\to X\times Y$ of $\pi$. It induces a morphism
$s_*: \mathcal{P}(X)\to \mathcal{P}(X\times Y)$. We also define
\[ \foncdef{\Delta}{\mathcal{P}(X)}{\mathcal{P}(X\times Y)}{I}{I\times Y} \]
which is $\mathbb{F}_2$-linear.

Note that
\begin{equation}\label{eq_s_delta_gen}
  \mathcal{P}(X\times Y)=s_*(\mathcal{P}(X))\oplus \Delta(\mathcal{P}(X))
\end{equation}
and
\[ \mathcal{P}_0(X\times Y)=s_*(\mathcal{P}_0(X))\oplus \Delta(\mathcal{P}(X)). \]

\begin{lem}
  For all $U,V\subset \mathcal{P}(X)$ affine subspaces with $\overrightarrow{V}\subset
  \overrightarrow{U}$, we have
  \[ \pfis{t|V; \delta|U} = \pfis{f|s_*(V)+ \Delta(U)}. \]
\end{lem}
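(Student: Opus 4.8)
The plan is to unwind both sides of the claimed identity $\pfis{t|V; \delta|U} = \pfis{f|s_*(V)+\Delta(U)}$ via the defining formulas and reduce to the key formula \eqref{eq_prod_q}. Recall that $\pfis{t|V;\delta|U} = \sum_{x\in V}\fdiag{t(x)}\pfis{\delta|U}$ by Definition \ref{def_pfis_2}, and that $\pfis{\delta|U} = \sum_{y\in U}\fdiag{\delta(y)}$ by Definition \ref{def_pfis_1}, so the left-hand side expands as $\sum_{x\in V, y\in U}\fdiag{t(x)\delta(y)}$ in $GW(K)$. On the right-hand side, since $s_*(V)+\Delta(U)$ is an affine subspace of $\mathcal{P}(X\times Y)$ (its direction is $s_*(\overrightarrow V)+\Delta(\overrightarrow U)$, a direct sum by \eqref{eq_s_delta_gen} since $\overrightarrow V\subset\overrightarrow U$), Definition \ref{def_pfis_1} gives $\pfis{f|s_*(V)+\Delta(U)} = \sum_{z\in s_*(V)+\Delta(U)}\fdiag{f(z)}$, and every such $z$ is uniquely $z = s_*(x)+\Delta(y)$ with $x\in V$, $y\in U$ by the directness of the decomposition. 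So both sides are sums over $(x,y)\in V\times U$, and it suffices to match the summands term by term: I would show $f(s_*(x)+\Delta(y)) \equiv t(x)\delta(y)$ in $K^\times/(K^\times)^2$ for each such pair, or at least match the multisets of square classes.

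The term-by-term identification is the heart of the argument and proceeds by writing $x = I'$, $y = I$ for subsets $I', I\subset X$ (thinking of elements of $\mathcal{P}(X)$ as subsets), so that $\Delta(y) = I\times Y$ and $s_*(x) = \{(i,s\text{-image of }i): i\in I'\}$, hence $s_*(x)+\Delta(y)$ is the subset of $X\times Y$ equal to $(I\times Y)\,\Delta\,s_*(I')$ where $s_*(I')=\{(i,j_i): i\in I'\}$ for the chosen section values $j_i$. Then $f$ of this subset is, by the convention $f(\{(i,j)\}) = -a_{i,j}$ and linearity, the product $\prod$ of the $-a_{i,j}$ over the elements of that subset. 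The point is that $\delta_i = \det(q_i) = a_{i,j}a_{i,j'}$ where $Y = \{j,j'\}$, and $t_i$ can be taken to be any one of $a_{i,j}$, $a_{i,j'}$ modulo $G(\pfis{\delta_i})$. For $i\in I\setminus I'$, both coordinates $(i,j)$ and $(i,j')$ lie in the subset, contributing $(-a_{i,j})(-a_{i,j'}) = a_{i,j}a_{i,j'} \equiv \delta_i$, matching the $\delta(y)$-contribution; for $i\in I'\setminus I$ or $i\in I'\cap I$, exactly one coordinate survives the symmetric difference, contributing $\pm a_{i,j_i}$, which represents $t_i$ modulo squares and modulo $G(\pfis{\delta_i})$, matching the $t(x)$-contribution; and the sign bookkeeping $(-1)^{|\cdot|}$ is exactly the one already handled in \eqref{eq_prod_q}. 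Working modulo $G(\pfis{\delta|U})$ (which is legitimate since that is the codomain of $t$ in Definition \ref{def_pfis_2}), all the ambiguities in the choice of representatives $t_i$ and of the section $s$ wash out.

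The main obstacle I anticipate is not any deep idea but bookkeeping: one must be careful that (i) $s_*(V)+\Delta(U)$ really is the internal direct sum so that the reindexing $(x,y)\mapsto s_*(x)+\Delta(y)$ is a bijection onto the affine subspace — this uses $\overrightarrow V\subset\overrightarrow U$ essentially, and the case $\overrightarrow V = \overrightarrow U$ is the tight one; and (ii) the square-class equality $f(s_*(x)+\Delta(y))\equiv t(x)\delta(y)$ holds only modulo $G(\pfis{\delta|U})$, not on the nose, so the argument must be phrased in $GW(K)$ from the start rather than trying to match literal elements of $K^\times$. A clean way to organize it is to first treat the linear case $U = \overrightarrow U$, $V = \overrightarrow V$ containing $0$, reducing to a statement about products $\prod_{i\in I}q_i$ via \eqref{eq_prod_q} and the expansion of $\pfis{\delta|\mathcal{P}(I)}$, and then translate an affine $U = u + \overrightarrow U$ by multiplying through by $\fdiag{\delta(u)}$ and similarly for $V$, exactly as in the proof following Definition \ref{def_pfis_1}. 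I expect this to be a short verification once the direct-sum decomposition \eqref{eq_s_delta_gen} and formula \eqref{eq_prod_q} are invoked.
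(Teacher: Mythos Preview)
Your proposal is correct and follows essentially the same approach as the paper's proof. The paper is simply much terser: it observes on generators that $\delta = f\circ\Delta$ and $t = f\circ s_*$ (modulo $G(\pfis{\delta|U})$), and concludes immediately, whereas you expand both sides term by term and verify the same identity $f(s_*(x)+\Delta(y)) \equiv t(x)\delta(y)$ by explicit case analysis on $i\in I\cap I'$, $I\setminus I'$, $I'\setminus I$ --- but the content is identical.
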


\begin{proof}
  The section $s$ gives us a way to choose representatives in $K^\times/(K^\times)^2$
  for each $t(\{i\})$: we take $f(\{s(i)\})$. In practice, if $q_i=\fdiag{a_i,b_i}$
  for each $I$ is the diagonalization which defines $f$, then $s$ picks between $a_i$ or
  $b_i$ for each $i$, and those are possible representatives for $t_i$.

  Furthermore, $\delta(\{i\})=-a_ib_i=f(\{i\}\times Y)=f(\Delta(\{i\}))$.
  This means that $\delta=f\circ \Delta$ and $t=f\circ s_*$ (at least after
  taking the classes modulo $G(\pfis{\delta|U})$), which gives the formula.
\end{proof}

Therefore when we compare Propositions \ref{prop_pq_split} and \ref{prop_pq_m2},
we should have, given $\gamma: X\to \{0,1,2\}$, $A=\gamma^{-1}(\{2\})\subset X$
and $B=\gamma^{-1}(\{1\})$:
\begin{align}\label{eq_comp_pfis}
  \sum_{I\subset X\times Y, \theta_I=\gamma} \pfis{f|\mathcal{P}(I)}
  &= 2^{|B\setminus J|}\sum_{J\subset B}\pfis{f|s_*(V_{J,A})+\Delta(\mathcal{P}(A\cup J))} \\
  \sum_{I\subset X\times Y, \theta_I=\gamma} \pfis{f|\mathcal{P}_0(I)}
  &= 2^{|B\setminus J|}\sum_{J\subset B}\pfis{f|s_*(V_{J,A}\cap \mathcal{P}_0(X))+\Delta(\mathcal{P}(A\cup J))}.
\end{align}

We only prove the first formula, the second one being similar and just necessiting taking
extra care about which subsets have even cardinality.

Actually, at this point the result is pure combinatorics and has nothing to do
with quadratic forms anymore. Let us consider the group ring $\Z[\mathcal{P}(X\times Y)]$.
For any subset $U\subset \mathcal{P}(X\times Y)$, we set $\sigma(U)=\sum_{x\in U}x\in
\Z[\mathcal{P}(X\times Y)]$. Then formula (\ref{eq_comp_pfis}) boils down to:

\begin{prop}
  Let $\gamma: X\to \{0,1,2\}$, $A=\gamma^{-1}(\{2\})\subset X$
  and $B=\gamma^{-1}(\{1\})$. Then in $\Z[\mathcal{P}(X\times Y)]$,
  we have
  \[ \sum_{I\subset X\times Y, \theta_I=\gamma} \sigma(\mathcal{P}(I))
  = 2^{|B\setminus J|}\sum_{J\subset B}\sigma(s_*(V_{J,A})+\Delta(\mathcal{P}(A\cup J))). \]
\end{prop}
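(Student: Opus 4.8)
The plan is to verify the identity in $\Z[\mathcal{P}(X\times Y)]$ by expanding both sides as sums of group-algebra elements indexed by pairs of subsets of $X\times Y$ and checking that the two multisets of contributions coincide. First I would fix $\gamma$, $A$ and $B$ and recall that $Y$ has two elements, say $Y=\{0,1\}$; for $i\in A$ the fibre $\pi^{-1}(\{i\})$ is entirely contained in any $I$ with $\theta_I=\gamma$, for $i\in B$ exactly one of the two elements of the fibre lies in $I$, and for $i\notin A\cup B$ neither does. So the left-hand side is indexed by the choices, for each $i\in B$, of which of the two points $s(i)$ or its partner lies in $I$; equivalently $I = (A\times Y)\,\sqcup\,J'$ where $J'$ is a section over $B$ of the partition into fibres. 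For such an $I$, $\mathcal{P}(I)$ is the $\mathbb{F}_2$-span of $\{(i,0),(i,1)\mid i\in A\}\cup\{\text{chosen point}\mid i\in B\}$.

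The key step is to match this with the right-hand side. On the right, for each $J\subset B$ the affine subspace $s_*(V_{J,A})+\Delta(\mathcal{P}(A\cup J))$ has direction $s_*(\mathcal{P}(A))+\Delta(\mathcal{P}(A\cup J))$ and the factor $2^{|B\setminus J|}$ counts something. Using the direct sum decomposition \eqref{eq_s_delta_gen}, one sees that $\sigma$ of an affine subspace that splits as a sum of subspaces in complementary summands is the product (in $\Z[\mathcal{P}(X\times Y)]$) of the corresponding $\sigma$'s; so I would write $\sigma(s_*(V_{J,A})+\Delta(\mathcal{P}(A\cup J))) = \sigma(s_*(V_{J,A}))\cdot \sigma(\Delta(\mathcal{P}(A\cup J)))$ after translating to a common basepoint, and expand $\sigma(\Delta(\mathcal{P}(A\cup J)))$ over subsets of $A\cup J$. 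The plan is then to show, by a change of summation variable, that $\sum_{J\subset B} 2^{|B\setminus J|}\sigma(s_*(V_{J,A})+\Delta(\mathcal{P}(A\cup J)))$ reorganizes exactly into $\sum_{I}\sigma(\mathcal{P}(I))$, where $I$ ranges over the sections described above. The combinatorial heart is the identity $\sum_{J\subset B} 2^{|B\setminus J|}(\text{span built from } J) = \sum_{\text{sections } J'}(\text{span built from } J')$, which one proves by noting that a point of $\Delta(\{i\})$ for $i\in B\setminus J$ can be rewritten, modulo the span, as a difference of the two fibre points $s(i)$ and $s(i)+\Delta(\{i\})$, so each term $2^{|B\setminus J|}$ accounts for the two possible choices of fibre point at each $i\in B\setminus J$.

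Concretely I would argue as follows. Pick $J'$ a section over $B$ (i.e. a choice of one fibre point for each $i\in B$), giving a term $\sigma(\mathcal{P}(I))$ on the left with $I=(A\times Y)\sqcup J'$. On the right, $\sigma(\mathcal{P}(I))$ appears inside $\sigma(s_*(V_{J,A})+\Delta(\mathcal{P}(A\cup J)))$ precisely for those $J\subset B$ such that the section $J'$ is ``compatible'' with $J$, and the linear-algebra identity $s_*(\mathcal{P}(A))+\Delta(\mathcal{P}(A\cup J)) = \mathcal{P}((A\times Y)\sqcup J'')$ for any section $J''$ over $J$ shows that each such affine subspace is itself a union of $2^{|B\setminus J|}$ translates of spans $\mathcal{P}(I)$ — one for each way of completing the partial section over $J$ to a full section over $B$, multiplied by the explicit factor. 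Counting how many times a given $\sigma(\mathcal{P}(I))$ occurs on each side then reduces to the binomial identity $\sum_{J\subseteq B}\;[\,J'\ \text{extends a section over}\ J\,]\cdot 1 = 2^{|B|}$-type bookkeeping that matches the left-hand multiplicity (which is $1$, since each section is counted once) after dividing through; equivalently, one checks coefficient by coefficient in the basis $\mathcal{P}(X\times Y)$ of $\Z[\mathcal{P}(X\times Y)]$.

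\textbf{Main obstacle.} The genuine content is the bookkeeping of multiplicities: making precise the claim that $s_*(V_{J,A})+\Delta(\mathcal{P}(A\cup J))$ decomposes, as a set, into exactly $2^{|B\setminus J|}$ affine translates of linear spans of the form $\mathcal{P}(I)$, and that summing over $J\subset B$ produces each $\mathcal{P}(I)$ (for $I$ a full section over $B$) with total multiplicity $1$. I expect this to require care about basepoints — the affine space $V_{J,A}=J+\mathcal{P}(A)$ has no canonical basepoint when $|J|$ is odd (as the preceding remark warns) — so the cleanest route is probably to work additively in $\Z[\mathcal{P}(X\times Y)]$ throughout, using that $\sigma$ of a coset is a translate of $\sigma$ of the underlying subgroup, and to organize the whole computation around the direct sum \eqref{eq_s_delta_gen} so that everything factors.
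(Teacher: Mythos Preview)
Your setup is right and matches the paper: the subsets $I$ with $\theta_I=\gamma$ are exactly $(A\times Y)\sqcup I'$ for $I'$ a section of $\pi$ over $B$, and after using \eqref{eq_s_delta_gen} both sides factor through $\sigma(\mathcal{P}(A\times Y))$, reducing everything to an identity in $\Z[\mathcal{P}(B\times Y)]$.

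But your central mechanism is wrong. The claimed identity $s_*(\mathcal{P}(A))+\Delta(\mathcal{P}(A\cup J)) = \mathcal{P}((A\times Y)\sqcup J'')$ is false: for $i\in J$ the left side contains $\Delta(\{i\})=\{i\}\times Y$, while the right side contains only the single point of $J''$ above $i$, and these generate different one-dimensional subspaces (already visible when $|J|=1$). More structurally, you cannot decompose the right-hand affine spaces as unions of translates of the $\mathcal{P}(I)$: after the reduction, the right-side space $s(J)+\Delta(\mathcal{P}(J))$ has dimension $|J|$, whereas each $\mathcal{P}(I')$ has dimension $|B|\geq |J|$, so any containment goes the other way. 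The block-matching strategy (and the ``binomial bookkeeping'' built on it) therefore cannot be made to work.

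What does work, and what the paper actually does, is precisely your throwaway last clause: compare coefficients of individual basis elements. After reducing to
\[ \sum_{I'}\sigma(\mathcal{P}(I'))=\sum_{J\subset B}2^{|B\setminus J|}\, s(J)\cdot\sigma(\Delta(\mathcal{P}(J))) \]
in $\Z[\mathcal{P}(B\times Y)]$, the elements appearing on either side are exactly the $S\subset B\times Y$ with $\pi|_S$ injective (partial sections). On the left, such an $S$ lies in $\mathcal{P}(I')$ for exactly the $2^{|B\setminus\pi(S)|}$ full sections $I'\supset S$. On the right, $s(J)\cdot\sigma(\Delta(\mathcal{P}(J)))$ is the sum of all sections over $J$ (the subset $T\subset J$ in $\sigma(\Delta(\mathcal{P}(J)))$ records where the section differs from $s$), so $S$ appears exactly once, in the term $J=\pi(S)$, with coefficient $2^{|B\setminus J|}$. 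These match, and that is the whole proof.
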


\begin{proof}
  Given the definition of $A$, the subsets $I\subset X\times Y$ such that $\theta_I=\gamma$
  are exactly those that can be written $I=(A\times Y)\coprod I'$ with $I'\subset B\times Y$
  such that $\pi$ induces a bijection $I'\to B$. We then have
  $\mathcal{P}(I)=\mathcal{P}(A\times Y)\oplus \mathcal{P}(I')$.

  Now given $J\subset B$, we have
  \begin{align*}
    s_*(V_J)+\Delta(\mathcal{P}(A\cup J))
    &= s(J)+s_*(\mathcal{P}(A))+\Delta(\mathcal{P}(A))+\Delta(\mathcal{P}(J)) \\
    &= s(J) + \Delta(\mathcal{P}(J)) + \mathcal{P}(A\times Y)
  \end{align*}
  where we used (\ref{eq_s_delta_gen}).

  So we are reduced to
  \begin{equation}\label{eq_combin_fin}
  \sum_I \sigma(\mathcal{P}(I))
    = \sum_{J\subset B}2^{|B\setminus J|}s(J)\sigma(\Delta(\mathcal{P}(J))) 
  \end{equation}
  where the left sum is over subsets $I\subset B\times Y$ such that $\pi$ induces
  a bijection $I\to B$.

  If we fully develop the left sum $\sum_I \sigma(\mathcal{P}(I))$, the elements
  that appear are the $S\subset B\times Y$ such that $\pi$ induces an injection
  $S\to B$, and each such $S$ appears as many times as there are ways to complete $S$
  in as $I$ such that $\pi$ induces a bijection $I\to B$. This means that for each
  element $i\in B\setminus \pi(S)$ there $2$ possible choices for the antecedent of
  $i$ in $I$, so in total $S$ appears $2^{B\setminus \pi(S)}$ times.

  And actually one can see that $s(J)\sigma(\Delta(\mathcal{P}(J)))$ is exactly
  the sum of all subsets $S\subset B\times Y$ such that $\pi$ induces a bijection
  $S\to J$. Indeed, any such $S$ will correspond in the sum given by $\sigma$ to
  the subset $T\subset J$ of elements $i\in J$ such that the antecedent of $i$
  in $S$ is not $s(i)$.

  This establishes (\ref{eq_combin_fin}).
\end{proof}

\section{Invariants of quaternionic forms}\label{sec_quater}

\subsection{Extending invariants to hermitian forms}\label{sec_ext}

Let $(A,\sigma)$ be a central simple algebra with involution of the first kind over $K$,
and let $\eps$ be $1$ if $\sigma$ is orthogonal and $-1$ if $\sigma$ is symplectic.

If $(B,\tau)$ is another such algebra with involution, with corresponding $\eps'\in \{\pm 1\}$,
and if $[A]=[B]\in \Br(K)$, then we can make a choice of hermitian Morita
equivalence between $(A_,\sigma)$ and $(B,\tau)$, which induces an isomorphism
$\Herm_n^\eps(A,\sigma)\to \Herm_n^{\eps'}(B,\tau)$. Another choice of equivalence changes this map
by a multiplicative scalar constant, and therefore there is a \emph{canonical} morphism
$\Herm_n^\eps(A,\sigma)\to \Herm_n^{\eps'}(B,\tau)/\sim$ which actually gives a canonical
isomorphism $\Herm_n^\eps(A,\sigma)(L)/\sim\isom \Herm_n^{\eps'}(B,\tau)/\sim$.

In particular, for any splitting extension $L/K$ of $A$, there is a canonical
isomorphism $\Herm_n^\eps(A_L,\sigma_L)/\sim\to (\Quad_n)_L/\sim$.

\begin{defi}
  Let $\alpha\in IW(\Quad_n/\sim)$. Then we say that an invariant $\hat{\alpha}\in
  IW(\Herm_n^\eps(A,\sigma)/\sim)$ is an extension of $\alpha$ if for
  any splitting extension $L/K$ of $A$, $\hat{\alpha}_L$ corresponds
  to $\alpha_L$ through the canonical isomorphism.

  The same definition applies for cohomological invariants.
\end{defi}

\begin{rem}
  We see that given the ambiguity of choice in hermitian Morita equivalence,
  it is not clear what extending non-even invariants would even mean. We
  could possibly define it as ``for every splitting extension, there exists
  a Morita equivalence such that...'', but it is not very satisfying (and there
  is evidence that this definition does not yield anything interesting anyway).
\end{rem}

In \cite{G1}, we define a commutative $\Z/2\Z$-graded ring
\[ \tld{GW}^\eps(A,\sigma) = GW(K) \oplus GW^\eps(A,\sigma) \]
and in \cite{G2} we show that it is actually naturally a (graded) pre-$\lambda$-ring,
with operations $\lambda^d: GW^\eps(A,\sigma)\to GW^\eps(A,\sigma)$ for $d$ odd,
and $\lambda^d: GW^\eps(A,\sigma)\to GW(K)$ for $d$ even. Furthermore, any
hermitian Morita equivalence between $(A,\sigma)$ and $(B,\tau)$ induces an
isomorphism of graded pre-$\lambda$-rings $\tld{GW}^\eps(A,\sigma)\to \tld{GW}^\eps(B,\tau)$,
which restricts to the identity on the neutral component $GW(K)$, and
when $A$ is split the operation $\lambda^d: GW^\eps(A,\sigma)\to GW(K)$ when $d$
is even corresponds to the usual $\lambda^d: GW(K)\to GW(K)$ through the isomorphism
$GW^\eps(A,\sigma)\simeq GW(K)$ coming from \emph{any} choice of Morita equivalence
(the choice does not matter since when $d$ is even $\lambda^d(\fdiag{a}h)=\lambda^d(h)$
holds for any hermitian form $h$).

\begin{prop}
  Any even Witt invariant $\alpha\in IW(\Quad_n/\sim)$ extends to
  $IW(\Herm_n^\eps(A,\sigma)/\sim)$.
\end{prop}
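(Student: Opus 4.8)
The plan is to exploit the description of even Witt invariants of $\Quad_n/\sim$ as combinations of the basic invariants $Q_n^d$ (Proposition \ref{prop_iw_sim}), and to show that each $Q_n^d$ extends; since extensions clearly form a $W(K)$-submodule stable under the relevant operations, this suffices. Recall from the definition of $Q_n^d$ that it is the even part of $P_n^d$, and that $P_n^d$ is the combination of $\lambda$-operations given by formula (\ref{eq_prd_lambda}). The key input is that the $\lambda$-operations make sense on $\eps$-hermitian forms: by \cite{G1,G2}, the graded pre-$\lambda$-ring $\tld{GW}^\eps(A,\sigma)=GW(K)\oplus GW^\eps(A,\sigma)$ carries operations $\lambda^d$, landing in $GW(K)$ when $d$ is even and in $GW^\eps(A,\sigma)$ when $d$ is odd.

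First I would define, for a form $h\in \Herm_n^\eps(A,\sigma)(L)$, the candidate extension by the same formula $\hat{P}_n^d(h)=\sum_{k=0}^d(-1)^k\binom{n-k}{d-k}\lambda^k(h)$, interpreting $\lambda^k(h)$ inside $\tld{GW}^\eps(A_L,\sigma_L)$; this is a Witt invariant of $\Herm_n^\eps(A,\sigma)$ with values in $GW(L)$ provided $d$ is even (if $d$ is odd the terms with odd $k$ land in $GW^\eps$, so one instead extends the \emph{even part} directly). More precisely, I would extend $Q_n^d$ itself: since $Q_n^d$ is the even part of $P_n^d$ and the $\Z/2\Z$-grading on $\tld{GW}^\eps$ matches the decomposition into even/odd components, the ``even part'' of $\sum_k(-1)^k\binom{n-k}{d-k}\lambda^k(h)$ is simply the sum of the terms with $k$ even, which lands in $GW(L)$; call this $\hat{Q}_n^d(h)$. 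Naturality in $L$ is immediate because the $\lambda$-operations are defined functorially. The next step is to check this is well-defined on similarity classes: one needs $\hat{Q}_n^d(\fdiag{\lambda}h)=\hat{Q}_n^d(h)$, which follows because $\lambda^k(\fdiag{\lambda}h)=\lambda^k(h)$ when $k$ is even (the analogue of the fact quoted in the excerpt that $\lambda^d(\fdiag{a}q)=\lambda^d(q)$ for $d$ even, which holds in $\tld{GW}^\eps$ by \cite{G2}).

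Finally I would verify the compatibility condition in the definition of ``extension'': for any splitting extension $L/K$, under the canonical isomorphism $\Herm_n^\eps(A_L,\sigma_L)/\sim\;\cong\;(\Quad_n)_L/\sim$, the invariant $\hat{Q}_n^d$ restricts to $(Q_n^d)_L$. This is where the remark in the excerpt is used: any choice of Morita equivalence identifies $GW^\eps(A_L,\sigma_L)$ with $GW(L)$, and \emph{for even $k$} the operation $\lambda^k:GW^\eps\to GW$ corresponds to the usual $\lambda^k$ on $GW(L)$ independently of the choice (since $\lambda^k(\fdiag{a}h)=\lambda^k(h)$). Hence the even-$k$ truncation of $\sum_k(-1)^k\binom{n-k}{d-k}\lambda^k$ corresponds precisely to $Q_n^d$, i.e.\ the even part of $P_n^d$, on the split side. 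Extending by $W(K)$-linearity over the basis $(Q_n^d)_{d\text{ even}}$ of $IW(\Quad_n/\sim)$ (Proposition \ref{prop_iw_sim}) gives the extension of an arbitrary even $\alpha$.

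The main obstacle is the well-definedness on similarity classes together with the choice-independence on the split side: both rely crucially on the identity $\lambda^k(\fdiag{\lambda}h)=\lambda^k(h)$ for $k$ even in the pre-$\lambda$-ring $\tld{GW}^\eps(A,\sigma)$, and on the fact that the Morita-induced isomorphism of pre-$\lambda$-rings is the identity on the $GW(K)$-component. These are exactly the properties recorded from \cite{G1,G2} in the paragraph preceding the statement, so once they are invoked the argument is essentially bookkeeping; the only genuinely delicate point is keeping straight that it is the \emph{even part} of $P_n^d$ (equivalently, only the even-degree $\lambda$-operations) that yields values in $GW(L)$ rather than in $GW^\eps(A_L,\sigma_L)$.
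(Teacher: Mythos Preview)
Your proof is correct and rests on the same idea as the paper's: extend the even $\lambda$-operations using the pre-$\lambda$-ring structure on $\tld{GW}^\eps(A,\sigma)$ from \cite{G1,G2}, and then extend an arbitrary even invariant by $W(K)$-linearity. The only difference is cosmetic: the paper works directly with the basis $(\lambda^d)_{d\text{ even}}$ of $IW(\Quad_n/\sim)$ (recorded in the Example after Definition~\ref{def_even_odd}), whereas you pass through the basis $(Q_n^d)_{d\text{ even}}$ from Proposition~\ref{prop_iw_sim} and then unwind $Q_n^d$ as a combination of even-degree $\lambda^k$, which brings you back to the same point.
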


\begin{proof}
  We know that the $\lambda^d$ with $d$ even form a basis of $IW(\Quad_n/\sim)$.
  But from what we explained just above, the $\lambda$-operation
  $\lambda^d: GW^\eps(A,\sigma)\to GW(K)$ defines an invariant $IW(\Herm_n^\eps(A,\sigma)/\sim)$
  which extends $\lambda^d\in IW(\Quad_n/\sim)$. Therefore, all invariants extend.
\end{proof}

\begin{rem}
  This method not only shows that all invariants extend, it even gives
  a somewhat canonical extension: if $\alpha = \sum x_d\lambda^d$ in $IW(\Quad_n)$,
  then take $\hat{\alpha}=\sum x_d\lambda^d$ in $IW(\Herm_n^\eps(A,\sigma))$. This
  is uniquely defined (but it does depend on the choice of extending each $\lambda^d$
  by the corresponding operation on hermitian forms, which is definitely a very natural
  choice, but still a choice).

  When we extend an invariant in this manner, we use the same notation $\alpha$
  to denote this extension.
\end{rem}

Now we ask the question for cohomological invariants: can every even cohomological
invariants of quadratic forms be extended to $IC_0(\Herm_n^\eps(A,\sigma))$? We show
that the answer is positive when $A$ has index $2$, and that they can even be extended
to liftable invariants.

\subsection{Some Pfister forms related to quaternions}\label{sec_basic_pfis}

Let $Q$ be a quaternion algebra over $K$, with canonical involution $\gamma$.
We write $Q_0$ for the subspace of pure quaternions, and $Q_0^\times$ for the set
of invertible pure quaternions. We also use $n_Q$ to denote the norm form of $Q$,
that is the reduced norm map $Q\to K$ viewed as a quadratic form.

Let $X$ be a finite set, and let
$(h_i)_{i\in X}\in (\Herm_2^{-1}(Q,\gamma))^X$ be a family of anti-hermitian
form over $(Q,\gamma)$ of reduced dimension $2$. We want to extend to such
a family the special invariants $\Psi_0^{J,A}$ that we defined in Definition
\ref{def_psi_quad}.

Remember that they depended on morphisms $\delta: \mathcal{P}(X)\to K^\times/(K^\times)^2$
and $t: \mathcal{P}(X)\to K^\times/G(\pfis{\delta|\mathcal{P}(X)})$, which extended
the basic relation $q_i=\fdiag{t_i}\pfis{\delta_i}$. This relation does not make sense
when replacing $q_i$ with a hermitian form $h_i$, but we can still
define $\delta_i$, as it is the discriminant of the form, which is also well-defined
for hermitian forms. Precisely, if $h_i=\fdiag{z_i}_\gamma$ for some invertible pure
quaternion $z_i\in Q_0^\times$, then we may take $\delta_i=z_i^2$ (its square class does not depend on the
choice of $z_i$). So our logical naive extension of $\pfis{\delta|\mathcal{P}(I)}$
is:

\begin{defi}
  For any subset $I\subset X$, we define
  \[ \pi(I) = \pfis{z_i^2}_{i\in I}. \]
\end{defi}

Now since we only want even invariants, we only have to define our equivalent of $t$
on $\mathcal{P}_0(X)$, wich means that we should not try to generalize $t_i$ but rather
$t(\{i,j\})$, which is characterized by $q_iq_j=\fdiag{t(\{i,j\})}\pfis{\delta_i,\delta_j}$.

Now the description of $\tld{GW}(Q,\gamma)$ in \cite{G1} tells us that
\[ \fdiag{z_i}_\gamma\cdot \fdiag{z_j}_\gamma = \fdiag{-\Trd_Q(z_iz_j)}\phi_{z_i,z_j} \]
where $\phi_{z_i,z_j}$ is the unique $2$-fold Pfister form whose class in $h^2(K)$
is $(z_i^2,z_j^2)+[Q]$. This tells us that our first guess $\pi(I)$ was not the
correct one, and also what our $t(\{i,j\})$ should be.

\begin{lem}\label{lem_z_nq}
  Let $z\in Q_0^\times$. Then $\pfis{z^2}n_Q=2n_Q$ in $W(K)$.
\end{lem}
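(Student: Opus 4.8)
The statement to prove is: for $z \in Q_0^\times$ a pure quaternion, $\pfis{z^2}\,n_Q = 2n_Q$ in $W(K)$, where $n_Q$ is the norm form of $Q$. The key point is that $n_Q$ is a $2$-fold Pfister form, and the relevant fact about Pfister forms is that a Pfister form $\phi$ satisfies $\pfis{a}\phi = 2\phi$ in $W(K)$ precisely when $a$ is represented by (equivalently, a similarity factor of) $\phi$. So the plan is: first, identify the square class of $z^2$ inside $K^\times/(K^\times)^2$ and show it is a value of $n_Q$; then invoke the standard Pfister multiplicativity to conclude.

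\textbf{Step 1: $z^2$ is a value of $n_Q$.} Write $Q = (a,b)_K$ with standard generators $i,j$ so that $i^2 = a$, $j^2 = b$, $ij = -ji$, and $n_Q = \pfis{-a,-b} = \fdiag{1,-a,-b,ab}$ (up to sign conventions matching the paper). A pure quaternion $z = xi + yj + zk$ (abusing notation) has $z^2 = -n_Q(z) \in K$, more precisely $z^2 = x^2 a + y^2 b - z^2 ab$, which is (up to sign) exactly a value of the pure part of $n_Q$, hence a value of $n_Q$ itself since $n_Q$ represents $1$. So as a square class, $z^2 \equiv n_Q(z') $ for a suitable argument, i.e. $z^2 \in D_K(n_Q)$, the set of nonzero values of $n_Q$.

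\textbf{Step 2: Pfister multiplicativity.} For any Pfister form $\phi$ over $K$ and any $c \in D_K(\phi)$, one has $\fdiag{c}\phi \cong \phi$ as forms, hence $\pfis{c}\phi = \fdiag{1}\phi + \fdiag{-c}\phi$... more directly $\pfis{-c}\phi = \fdiag{1,c}\phi$ and since $\fdiag{c}\phi = \phi$ we get $\pfis{-c}\phi = 2\phi$ in $W(K)$; equivalently $\pfis{c}\phi = 2\phi$ because $\phi$ (being a Pfister form) is isometric to $\fdiag{-1}\phi$ when... — here one must be slightly careful with the sign convention in $\pfis{\cdot}$, but the upshot is the standard fact that $\pfis{c}\,n_Q = 2 n_Q$ in $W(K)$ whenever $c$ is a value of $n_Q$ (see e.g. \cite{EKM} or \cite{Lam}). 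Applying this with $c = z^2$, which is a value of $n_Q$ by Step 1, and noting $z^2 \equiv z^2 \bmod (K^\times)^2$ so $\pfis{z^2}$ is well-defined, gives $\pfis{z^2}\,n_Q = 2 n_Q$.

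\textbf{Main obstacle.} There is essentially no deep obstacle here; the only thing to get right is the bookkeeping of sign conventions — whether $\pfis{c}$ denotes $\fdiag{1,-c}$ or $\fdiag{1,c}$, and correspondingly whether one wants $z^2$ or $-z^2 = n_Q(z)$ to be the represented value. Since $n_Q$ is a Pfister form it represents $1$, hence $-1 \cdot (\text{value}) $ is again a value iff $\phi$ is torsion-like — but in fact for a Pfister form $\phi$, $D_K(\phi) = G_K(\phi)$ is a group containing $1$, and the key identity $\pfis{c}\phi = 2\phi$ holds for $c \in D_K(\phi)$ regardless of the sign convention because $D_K(\phi)$ is closed under the relevant operations for an anisotropic Pfister form, and is all of $K^\times$ if isotropic (in which case $\phi = 0$ in $W(K)$ and both sides vanish). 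So the clean argument is: either $n_Q$ is hyperbolic, and then both sides are $0$; or $n_Q$ is anisotropic, $D_K(n_Q)$ is a subgroup of $K^\times$ containing $z^2$ (a value), and $\fdiag{z^2} n_Q \cong n_Q$ gives the result immediately.
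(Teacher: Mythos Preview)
Your approach is the same as the paper's in spirit, but the signs are wrong in both steps, and the errors only cancel by accident. For a pure quaternion $z$ one has $-z^2=\Nrd_Q(z)\in D_K(n_Q)$; your own formula $z^2=-n_Q(z)$ says precisely this, and the conclusion ``$z^2\in D_K(n_Q)$'' does not follow (the clause ``since $n_Q$ represents $1$'' does nothing here, and $-1$ need not lie in $D_K(n_Q)$). Second, for a Pfister form $\phi$ and $c\in D_K(\phi)=G_K(\phi)$ one has $\fdiag{c}\phi\simeq\phi$, hence with the convention $\pfis{c}=\fdiag{1,-c}$ this gives $\pfis{c}\phi=0$ in $W(K)$, \emph{not} $2\phi$; your ``standard fact'' is stated with the wrong sign. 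So your final line ``$\fdiag{z^2}n_Q\cong n_Q$ gives the result immediately'' is false as written: if that isometry held, it would yield $\pfis{z^2}n_Q=0$.

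The paper's proof is the clean version of what you are reaching for: since $-z^2=\Nrd_Q(z)$ is represented by $n_Q$, one has $\pfis{-z^2}n_Q=0$ in $W(K)$, i.e.\ $\fdiag{z^2}n_Q=-n_Q$; then $\pfis{z^2}n_Q=n_Q-\fdiag{z^2}n_Q=2n_Q$.
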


\begin{proof}
  Since $-z^2$ is the reduced norm of $z$, it is represented by $n_Q$, and therefore
  $\pfis{-z^2}n_Q=0$ in $W(K)$, which means that $\fdiag{z^2}n_Q=-n_Q$ and thus
  $\pfis{z^2}n_Q=2n_Q$ in $W(K)$.
\end{proof}

\begin{prop}
  If $|I|\pgq 2$, then $\pi(I) - 2^{|I|-2}n_Q$ is Witt-equivalent to a
  (unique) $|I|$-fold Pfister form.
\end{prop}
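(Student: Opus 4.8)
The plan is to proceed by induction on $|I|$, using the multiplicative structure recalled just before the statement. The base case $|I|=2$ is exactly the assertion that $\fdiag{z_i}_\gamma\cdot\fdiag{z_j}_\gamma = \fdiag{-\Trd_Q(z_iz_j)}\phi_{z_i,z_j}$ with $\phi_{z_i,z_j}$ the $2$-fold Pfister form of class $(z_i^2,z_j^2)+[Q]$, together with $2^{|I|-2}n_Q = n_Q$: here $\pi(I) = \pfis{z_i^2,z_j^2}$, and we must check that $\pfis{z_i^2,z_j^2} - n_Q$ is Witt-equivalent to a $2$-fold Pfister form, namely $\phi_{z_i,z_j}$. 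This is a direct computation in $W(K)$ using that $\phi_{z_i,z_j}$ and $\pfis{z_i^2,z_j^2}$ and $n_Q$ all have the same residue behavior and that their classes in $h^2(K)$ satisfy $[\phi_{z_i,z_j}] = (z_i^2,z_j^2) + [Q] = [\pfis{z_i^2,z_j^2}] + [n_Q]$; by the Arason-Pfister Hauptsatz (invoked via completeness of the fundamental filtration, as in the remark after Proposition~\ref{prop_hid_deg}) an element of $I^2(K)$ that is Witt-equivalent to a difference of $2$-fold Pfister forms and has a given class in $I^2/I^3$ is determined, and one checks it is the single Pfister form $\phi_{z_i,z_j}$; uniqueness follows since two isometric Pfister forms of the same dimension are equal.

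For the inductive step, write $I = I'\sqcup\{k\}$ with $|I'| = |I|-1 \pgq 2$, so by induction $\pi(I') = 2^{|I'|-2}n_Q + \psi'$ in $W(K)$ for a unique $(|I'|)$-fold Pfister form $\psi'$. Then $\pi(I) = \pfis{z_k^2}\cdot\pi(I') = \pfis{z_k^2}\bigl(2^{|I'|-2}n_Q + \psi'\bigr) = 2^{|I'|-2}\pfis{z_k^2}n_Q + \pfis{z_k^2}\psi'$. By Lemma~\ref{lem_z_nq}, $\pfis{z_k^2}n_Q = 2n_Q$, so the first term is $2^{|I'|-1}n_Q = 2^{|I|-2}n_Q$. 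It remains to analyze $\pfis{z_k^2}\psi'$: since $\psi'$ is a Pfister form, $\pfis{z_k^2}\psi'$ is a $(|I|)$-fold Pfister form provided $z_k^2$ is a similarity factor of $\psi'$, but in general $\pfis{z_k^2}\psi'$ is a $(|I|)$-fold Pfister form whenever $\psi'$ is Pfister --- indeed $\fdiag{1,-z_k^2}\otimes\psi'$ is by definition a Pfister form of the right dimension. So $\pi(I) - 2^{|I|-2}n_Q = \pfis{z_k^2}\psi'$ is Witt-equivalent to the $(|I|)$-fold Pfister form $\pfis{z_k^2}\otimes\psi'$, and uniqueness again follows from anisotropic-dimension considerations: two $(|I|)$-fold Pfister forms that are Witt-equivalent are isometric, hence equal.

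The main subtlety --- and the step I would write most carefully --- is the base case, specifically identifying the Witt class $\pfis{z_i^2,z_j^2} - n_Q$ with the \emph{specific} Pfister form $\phi_{z_i,z_j}$ rather than merely with \emph{some} $2$-fold Pfister form. Here one uses that the quaternion norm form $n_Q$ is itself the $2$-fold Pfister form of class $[Q]\in h^2(K) = {}_2\Br(K)$, so $[\pfis{z_i^2,z_j^2}] + [n_Q] = (z_i^2,z_j^2) + [Q]$ in $h^2(K)$, which is exactly $[\phi_{z_i,z_j}]$; then the difference $\pfis{z_i^2,z_j^2} - n_Q - \phi_{z_i,z_j}$ lies in $I^3(K)$ and is a $\Z$-combination of at most three $2$-fold Pfister forms, so has anisotropic dimension at most $12 < 16$, forcing it to be zero by the Arason-Pfister Hauptsatz. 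One must also note the elementary fact that a $2$-fold Pfister form minus $n_Q$ \emph{is} Witt-equivalent to a $2$-fold Pfister form: this is the content of the common-slot / chain-equivalence phenomenon for $2$-fold Pfister forms, but it also follows directly here from the explicit formula $\fdiag{z_i}_\gamma\cdot\fdiag{z_j}_\gamma = \fdiag{-\Trd_Q(z_iz_j)}\phi_{z_i,z_j}$ recalled from \cite{G1}, since the left side is by construction a scalar multiple of a $2$-fold Pfister form and comparing with the diagonalization of $\pfis{z_i^2,z_j^2}$ pins down $\phi_{z_i,z_j}$ up to Witt equivalence. Everything else in the induction is formal manipulation in $W(K)$ via Lemma~\ref{lem_z_nq} and the definition of Pfister forms.
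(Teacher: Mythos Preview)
Your inductive reduction via Lemma~\ref{lem_z_nq} is correct and is essentially the same mechanism as the paper's: the paper does not induct but instead factors in one step, writing $2^{n-2}n_Q=\pfis{z_3^2,\dots,z_n^2}n_Q$ so that $\pi(I)-2^{|I|-2}n_Q=\pfis{z_3^2,\dots,z_n^2}\bigl(\pfis{z_1^2,z_2^2}-n_Q\bigr)$. Both routes reduce the problem to $|I|=2$ and then multiply by a Pfister form; your induction just peels off one slot at a time instead of all at once.

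For the base case, your common-slot remark is exactly what the paper does explicitly: it chooses $z_0\in Q_0^\times$ anticommuting with $z_1$, so that $n_Q=\pfis{z_1^2,z_0^2}$ shares the slot $z_1^2$ with $\pfis{z_1^2,z_2^2}$, and then computes $\pfis{z_1^2,z_2^2}-n_Q=\pfis{z_1^2,z_0^2z_2^2}$ in $W(K)$. That is the whole content of the proposition; no appeal to Arason--Pfister is needed here.

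There is, however, a genuine slip in your Arason--Pfister argument for identifying the base-case form with $\phi_{z_i,z_j}$: the Hauptsatz for $I^3$ gives the bound $\pgq 8$, not $\pgq 16$, so ``anisotropic dimension at most $12<16$'' does not force vanishing. That identification is not required for the proposition as stated (it only asks for \emph{some} $|I|$-fold Pfister form), so the error is in a side remark; but if you do want to pin down $\phi_{z_i,z_j}$, the clean way is: first obtain a $2$-fold Pfister form $\psi$ via the common slot, then note $[\psi]=[\phi_{z_i,z_j}]$ in $h^2(K)$, and finally use that two $n$-fold Pfister forms with the same class in $I^n/I^{n+1}$ are isometric (since $\psi\perp-\phi_{z_i,z_j}$ lies in $I^3$, is isotropic because both summands represent $1$, hence has anisotropic part of dimension $<8$, hence is hyperbolic).
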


\begin{proof}
  Assume for simplicity that $I=\{1,\dots,n\}$. Then from the preceding lemma,
  \[ 2^{n-2}n_Q = \pfis{z_3^2,\dots,z_n^2}n_Q \]
  in $W(K)$, which means that
  \[ \pi(I) - 2^{|I|-2}n_Q = \pfis{z_3^2,\dots,z_n^2}(\pfis{z_1^2,z_2^2}-n_Q). \]

  Now if we take $z_0\in Q_0^\times$ which anti-commutes with $z_1$:
  \begin{align*}
    \pfis{z_1^2,z_2^2}-n_Q &= \pfis{z_1^2,z_2^2} - \pfis{z_1^2,z_0^2} \\
                           &= \pfis{z_1^2,z_0^2z_2^2} - \pfis{z_1^2,z_0^2,-z_2^2}
  \end{align*}
  in $W(K)$, and $\pfis{z_1^2,z_0^2,-z_2^2}$ is hyperbolic since $-z_2^2$ is
  represented by $n_Q=\pfis{z_1^2,z_0^2}$. So in fact $\pfis{z_1^2,z_2^2}-n_Q$
  is Witt-equivalent to a $2$-fold Pfister form. We conclude with the fact that two
  $n$-fold Pfister forms which are Witt-equivalent are actually isometric.
\end{proof}

This allows the following definition:

\begin{defi}
  Let $I\subset X$. If $|I|\ppq 1$ we set $\phi(I)=\pi(I)$. If $|I|\pgq 2$,
  $\phi(I)$ is the unique $|I|$-fold Pfister form which is Witt-equivalent
  to $\pi(I) - 2^{|I|-2}n_Q$.
\end{defi}

\begin{rem}
  When $Q$ is split, then $n_Q$ is hyperbolic and therefore $\phi(I)=\pi(I)$.
\end{rem}

\begin{ex}
  When $Q$ is split, and each $h_i$ corresponds to some $q_i$ though some
  Morita equivalence, then $\phi(I)=\Psi^{\emptyset,I}((q_i)_{i\in X})$ (the choice
  of Morita equivalence does not matter since this is an even invariant). So
  we can already extend $\Psi^{\emptyset,I}$ to $IW((\Herm_2^{-1}(Q,\gamma))^X)$,
  to an invariant that also takes values in Pfister forms.

  Of course $\pi(I)$ would also define such an extension (but it is
  not the relevant one for our purposes).
\end{ex}

We can record some basic facts about those forms:

\begin{lem}\label{lem_prod_pi_phi}
  Let $I,J\subset X$. Then
  \begin{align*}
    \pi(I)\cdot \pi(J) &= 2^{|I\cap J|}\pi(I\cup J) \\
    \pi(I)\cdot \phi(J) &= \left\{ \begin{array}{lc} 2^{|I\cap J|}\pi(I\cup J) & \text{if $|J|\ppq 1$} \\
                                     2^{|I\cap J|}\phi(I\cup J) & \text{otherwise}
                                   \end{array} \right. \\
    \phi(I)\cdot \phi(J) &= \left\{ \begin{array}{lc} 2^{|I\cap J|}\pi(I\cup J) & \text{if $|I|,|J|\ppq 1$} \\
                                     2^{|I\cap J|}\phi(I\cup J) & \text{otherwise.}
                                   \end{array} \right. \\
  \end{align*}
\end{lem}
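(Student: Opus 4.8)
The plan is to reduce everything to computations inside $W(K)$, using the fact established above that an $n$-fold Pfister form is determined up to isometry by its Witt class, together with the defining relation $\pi(I) = \pfis{z_i^2}_{i\in I}$ (a genuine $|I|$-fold Pfister form when $|I|\pgq 1$, equal to $1\in W(K)$ when $I=\emptyset$) and the congruence $\phi(I)\equiv \pi(I)-2^{|I|-2}n_Q \pmod{\text{hyperbolic}}$ when $|I|\pgq 2$, with $\phi(\emptyset)=1$, $\phi(\{i\})=\pi(\{i\})=\pfis{z_i^2}$. The single computational tool is Lemma \ref{lem_z_nq}: $\pfis{z^2}n_Q = 2n_Q$ in $W(K)$ for any $z\in Q_0^\times$.

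First I would handle the $\pi$-$\pi$ case. Since $\pfis{a,a}=\pfis{a}\pfis{a}$ is Witt-equivalent to $2\pfis{a}$ (as $\pfis{a}\otimes\pfis{a}$ has a hyperbolic complement; concretely $\fdiag{1,-a}\fdiag{1,-a}=\fdiag{1,-a,-a,a}\cong\fdiag{1,-a}\perp\mathbb{H}$ when $a\neq 1$, and trivially when $a=1$), for any family of square classes we get $\pfis{a_i}_{i\in I}\cdot\pfis{a_j}_{j\in J}=2^{|I\cap J|}\pfis{a_k}_{k\in I\cup J}$ in $W(K)$; applying this with $a_i=z_i^2$ gives the first formula. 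Since both sides are (scalar multiples of) Pfister forms of the same fold, Witt-equivalence upgrades to isometry, but in fact the formulas are stated in $W(K)$ (the products $\pi(I)\pi(J)$ etc.\ are not themselves Pfister forms once a coefficient $2^{|I\cap J|}$ appears), so I would just work in $W(K)$ throughout.

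For the $\pi$-$\phi$ case with $|J|\pgq 2$: write $\phi(J)\equiv\pi(J)-2^{|J|-2}n_Q$ in $W(K)$, so $\pi(I)\phi(J)\equiv\pi(I)\pi(J)-2^{|J|-2}\pi(I)n_Q$. If $|I|\pgq 1$, pick any $i\in I$; then $\pi(I)n_Q$ contains the factor $\pfis{z_i^2}n_Q=2n_Q$, hence $\pi(I)n_Q=2^{|I|-1}\cdot(\text{product of the remaining }\pfis{z_k^2})\cdot n_Q$, and iterating, $\pi(I)n_Q=2^{|I|}n_Q$ in $W(K)$ — wait, more carefully: $\pi(I)n_Q=\pfis{z_{i_1}^2}\cdots\pfis{z_{i_r}^2}n_Q$; absorbing one factor at a time via Lemma \ref{lem_z_nq} gives $2^{|I|}n_Q$? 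No: $\pfis{z^2}n_Q=2n_Q$ replaces one Pfister factor by the scalar $2$, so $\pi(I)n_Q=2^{|I|}n_Q$. Hmm, but that would make $\pi(I)\phi(J)\equiv 2^{|I\cap J|}\pi(I\cup J)-2^{|J|-2}\cdot 2^{|I|}n_Q$, which is not obviously $2^{|I\cap J|}\pi(I\cup J)$. The resolution must be that $|I\cup J|\pgq 2$ as well (true since $|J|\pgq 2$), and $\pi(I\cup J)\equiv\phi(I\cup J)+2^{|I\cup J|-2}n_Q$, so the $n_Q$ terms are being reorganized — I would instead prove $\pi(I)\cdot\phi(J)\equiv 2^{|I\cap J|}\pi(I\cup J)$ by showing the $n_Q$-contribution vanishes when $|I|\pgq 1$: more efficiently, absorb all but at most... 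Actually the clean route: when $|I|\pgq 1$, $2^{|J|-2}\pi(I)n_Q = 2^{|J|-2}2^{|I|}n_Q$, while $2^{|I\cap J|}\pi(I\cup J)=2^{|I\cap J|}\pfis{z_k^2}_{I\cup J}$; so I need $\pi(I)\pi(J) - 2^{|I|+|J|-2}n_Q = 2^{|I\cap J|}\pi(I\cup J)$, i.e.\ the extra term $2^{|I|+|J|-2}n_Q=2^{|I\cap J|}\cdot 2^{|I\cup J|-2}n_Q$ must be killed — and indeed $\pi(I\cup J)\equiv\phi(I\cup J)+2^{|I\cup J|-2}n_Q$ does NOT hold as stated; the correct relation is $\phi(K)\equiv\pi(K)-2^{|K|-2}n_Q$, so $2^{|I\cap J|}\pi(I\cup J) = 2^{|I\cap J|}\phi(I\cup J)+2^{|I\cap J|}2^{|I\cup J|-2}n_Q = 2^{|I\cap J|}\phi(I\cup J)+2^{|I|+|J|-2}n_Q$. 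Hence $\pi(I)\phi(J)\equiv\pi(I)\pi(J)-2^{|I|+|J|-2}n_Q = 2^{|I\cap J|}\pi(I\cup J)-2^{|I|+|J|-2}n_Q = 2^{|I\cap J|}\phi(I\cup J)$?? That gives $\phi$, not $\pi$ — so in fact the stated answer $2^{|I\cap J|}\pi(I\cup J)$ when $|J|\pgq 1$ must use $|I|\pgq 1$ differently, or the congruences $\phi\equiv\pi$ are equalities in $W(K)$ only up to the ambiguity that both $\pi(I\cup J)$ and $\phi(I\cup J)$ represent the same Witt-class-modulo-hyperbolic... I would at this point just say: \emph{in $W(K)$}, $\phi(K)$ and $\pi(K)$ differ by $2^{|K|-2}n_Q$, so the two right-hand sides $2^{|I\cap J|}\pi(I\cup J)$ and $2^{|I\cap J|}\phi(I\cup J)$ differ by $2^{|I|+|J|-2}n_Q$, which is \emph{exactly} the discrepancy introduced by replacing one $\pi$ by one $\phi$ on the left; a careful bookkeeping of how many $n_Q$-correction terms each case produces — zero when both factors are $\pi$, one when exactly one is $\phi$ (for the $\phi$ with $\pgq 2$ folds), and the "otherwise" branches where a $\phi(\emptyset)$ or $\phi(\{i\})$ is literally equal to a $\pi$ — yields the three cases precisely.

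So concretely: I would prove all three formulas simultaneously by first establishing, in $W(K)$, the master identity $\pi(I)\cdot n_Q = 2^{|I|}n_Q$ for $|I|\pgq 1$ (iterating Lemma \ref{lem_z_nq}) and $\pi(\emptyset)n_Q=n_Q$; then expanding each product using $\phi(K)=\pi(K)$ for $|K|\ppq 1$ and $\phi(K)=\pi(K)-2^{|K|-2}n_Q$ in $W(K)$ for $|K|\pgq 2$; substituting the first formula $\pi(I)\pi(J)=2^{|I\cap J|}\pi(I\cup J)$; and collecting terms, where the $n_Q$-corrections bookkeep exactly to reproduce the case distinctions (the key numerics being $2^{|I\cap J|}\cdot 2^{|I\cup J|-2} = 2^{|I|+|J|-2}$). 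Finally, since each side lies in the subring of $W(K)$ and the relevant scalar-multiple-of-Pfister classes are detected by their Witt classes, no isometry subtlety remains. The main obstacle is purely the bookkeeping in the case where $|I\cup J|\pgq 2$ but one or both of $|I|,|J|$ is $0$ or $1$: there $\phi$ on one side may coincide with $\pi$ while on the other it carries a correction, and one must check the powers of $2$ and the single $n_Q$-term match; this is elementary but requires writing out the roughly four sub-cases of each line carefully rather than invoking a uniform formula.
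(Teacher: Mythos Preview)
Your approach is exactly the paper's: use $\pfis{a,a}=2\pfis{a}$ for the first identity, then for the other two expand $\phi(K)=\pi(K)-2^{|K|-2}n_Q$ (when $|K|\pgq 2$) and absorb the $n_Q$-terms via the iteration of Lemma~\ref{lem_z_nq}, i.e.\ your ``master identity'' $\pi(I)\cdot n_Q=2^{|I|}n_Q$. The key numerical coincidence $|I\cap J|+|I\cup J|=|I|+|J|$ is precisely what makes the corrections cancel, and you identify it.

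The confusion in the middle of your write-up is not a mathematical error on your part but a symptom of typos in the paper. The definition of $\phi$ just above the lemma should read $\phi(I)=\pi(I)$ when $|I|\ppq 1$ (not $\pgq 1$), and correspondingly the case distinctions in the lemma have their inequalities reversed: your computation
\[
\pi(I)\cdot\phi(J)=2^{|I\cap J|}\pi(I\cup J)-2^{|I|+|J|-2}n_Q=2^{|I\cap J|}\phi(I\cup J)\quad(|J|\pgq 2)
\]
is correct, and the branch giving $2^{|I\cap J|}\pi(I\cup J)$ is the one where $|J|\ppq 1$ (so $\phi(J)=\pi(J)$ literally). Similarly for $\phi(I)\cdot\phi(J)$. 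So your bookkeeping was right; you were trying to match a misstated target. Once the inequalities are corrected, the sub-case analysis you outline in your final paragraph goes through with no further obstacle.
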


\begin{proof}
  The first formula is clear once we notice that $\pfis{a,a}=\pfis{-1,a}=2\pfis{a}$.
  The other two are also very simple using Lemma \ref{lem_z_nq}.  
\end{proof}

It is also useful to know more about the case where $|I|=2$.

\begin{prop}\label{prop_phi_trace}
  Suppose $I=\{i,j\}$. Then the symmetric bilinear form
  \[ \anonfoncdef{Q\times Q}{K}{(z,z')}{\Trd_Q(zz_i\overline{z'}z_j)} \]
  is isometric to $\fdiag{\Trd_Q(z_iz_j)}\phi(I)$.
\end{prop}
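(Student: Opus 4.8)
The plan is to compute the Gram matrix of the given trace form in a convenient basis of $Q$ and recognize the result. First I would fix a basis adapted to $z_i$: since $z_i\in Q_0^\times$, we have $Q=K[z_i]\oplus K[z_i]w$ for any $w\in Q_0^\times$ anti-commuting with $z_i$, so $(1,z_i,w,z_iw)$ is a $K$-basis of $Q$. With respect to this basis, $\Trd_Q(z\bar{z'})$ is the diagonal form $\fdiag{2,-2z_i^2,-2w^2,2z_i^2w^2}$, which (after rescaling) is $2\pfis{z_i^2,w^2}=2n_Q$; I would want to re-choose things so that the bilinear form $b(z,z')=\Trd_Q(zz_i\bar{z'}z_j)$ has a nice matrix. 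The key computational step is to expand $z\mapsto \Trd_Q(zz_i\bar z' z_j)$ using $\bar{z'}=\Trd_Q(z')-z'$ and the fact that $\Trd_Q$ is a trace form, together with $z_i\bar{z}_i=-z_i^2=\Nrd_Q(z_i)$; this should identify $b$ up to the scalar $\Trd_Q(z_iz_j)$ with the norm form of $Q$ twisted by the ``rotation'' induced by $z_i,z_j$.

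The cleanest route is probably to reduce to the generic case and use a Morita/specialization argument, but since the statement is purely about quaternion algebras I would instead argue directly: the form $b$ is visibly a symmetric bilinear form on $Q$ (symmetry follows from applying $\Trd_Q(u)=\Trd_Q(\bar u)$ and $\overline{zz_i\bar{z'}z_j}=\bar z_j z' \bar z_i \bar z=-z_j z' z_i \bar z$ together with $\Trd_Q(uv)=\Trd_Q(vu)$, using $z_i,z_j\in Q_0$). So $b$ is a $4$-dimensional symmetric bilinear form, hence to identify it with $\fdiag{\Trd_Q(z_iz_j)}\phi(I)$ it suffices to check that it is a general $2$-fold Pfister form with the right similarity factor and the right Arason class in $h^2(K)$. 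For the Arason/Witt class: by the description recalled before the lemma, $\fdiag{z_i}_\gamma\cdot\fdiag{z_j}_\gamma=\fdiag{-\Trd_Q(z_iz_j)}\phi_{z_i,z_j}$ with $\phi_{z_i,z_j}$ the unique $2$-fold Pfister form whose $h^2$-class is $(z_i^2,z_j^2)+[Q]$, and by definition $\phi(I)=\pi(I)=\pfis{z_i^2,z_j^2}$ when $|I|\ge 1$ is split off appropriately --- wait, here $|I|=2\ge 1$ so $\phi(I)=\pi(I)=\pfis{z_i^2,z_j^2}$; hence I must check $b\cong \fdiag{\Trd_Q(z_iz_j)}\pfis{z_i^2,z_j^2}$ directly, and this should drop out of the explicit Gram matrix computation after diagonalizing.

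Concretely, I would compute: choosing the basis $(1,z_i,z_j,z_iz_j)$ (assuming $z_i,z_j$ linearly independent; the degenerate cases $z_j\in Kz_i$ are handled separately and are easy), evaluate $b$ on all pairs using $\Trd_Q(1)=2$, $\Trd_Q(z_i)=\Trd_Q(z_j)=0$, $z_i^2,z_j^2\in K$, and $\Trd_Q(z_iz_j)=-\Trd_Q(z_jz_i)$ only if they anticommute --- in general $z_iz_j+z_jz_i\in K$. After simplification the matrix should be block-diagonal or easily diagonalizable to $\Trd_Q(z_iz_j)\cdot\fdiag{1,-z_i^2,-z_j^2,z_i^2z_j^2}$, i.e. $\fdiag{\Trd_Q(z_iz_j)}\pfis{z_i^2,z_j^2}=\fdiag{\Trd_Q(z_iz_j)}\phi(I)$. \textbf{The main obstacle} I anticipate is bookkeeping the cross-terms $\Trd_Q(z_iz_j)$ versus $z_iz_j+z_jz_i$ cleanly so that the off-diagonal entries of the Gram matrix cancel after the right change of basis; handling the case where $z_i$ and $z_j$ fail to anticommute (so $z_iz_j$ is not pure) without splitting into subcases requires choosing the basis intrinsically rather than as $(1,z_i,z_j,z_iz_j)$, e.g. replacing $z_j$ by its component orthogonal to $z_i$ for the reduced trace bilinear form. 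Once the basis is chosen well, the identification is a short determinant/discriminant check plus matching the Hasse invariant with $[Q]$.
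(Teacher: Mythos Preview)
Your proposal contains a genuine error: you have misread the definition of $\phi(I)$. The paper's definition (which admittedly has a typo, ``$|I|\pgq 1$'' should read ``$|I|\ppq 1$'') sets $\phi(I)=\pi(I)$ only for $|I|\in\{0,1\}$; for $|I|\pgq 2$ the form $\phi(I)$ is the $|I|$-fold Pfister form Witt-equivalent to $\pi(I)-2^{|I|-2}n_Q$. This is precisely the point of the paragraph preceding the definition, which says ``our first guess $\pi(I)$ was not the correct one'' and identifies $\phi(\{i,j\})$ with the $2$-fold Pfister form whose class in $h^2(K)$ is $(z_i^2,z_j^2)+[Q]$, not $(z_i^2,z_j^2)$. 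So your target form $\fdiag{\Trd_Q(z_iz_j)}\pfis{z_i^2,z_j^2}$ is wrong whenever $Q$ is non-split, and your guessed diagonalization $\Trd_Q(z_iz_j)\cdot\fdiag{1,-z_i^2,-z_j^2,z_i^2z_j^2}$ cannot be the right answer: its Clifford invariant is $(z_i^2,z_j^2)$, missing the $[Q]$ twist. Your closing remark about ``matching the Hasse invariant with $[Q]$'' is in fact the correct instinct, but it directly contradicts the diagonalization you wrote down three lines earlier.

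The paper's own proof is not a computation at all: it simply observes that this trace form is, by the construction in \cite{G1}, exactly how the product $\fdiag{z_i}_\gamma\cdot\fdiag{z_j}_\gamma$ is realized as a quadratic form in $GW(K)\subset\tld{GW}^{-1}(Q,\gamma)$, and that product was already identified there as $\fdiag{-\Trd_Q(z_iz_j)}\phi_{z_i,z_j}$ (up to the sign convention, which is absorbed since the form is $4$-dimensional). Your direct Gram-matrix approach is salvageable in principle---one can diagonalize $b$ and check it is a scaled $2$-fold Pfister form with the right value $b(1,1)=\Trd_Q(z_iz_j)$ and the right Clifford invariant $(z_i^2,z_j^2)+[Q]$---but you would need to actually carry out the computation rather than guess the outcome, and the $[Q]$ contribution will appear precisely because the ambient bilinear form $\Trd_Q(u\bar v)$ on $Q$ is $n_Q$ rather than the split hyperbolic form.
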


\begin{proof}
  This is the characterization of $h_j\cdot h_j$ given in \cite{G1}.
\end{proof}

\subsection{The invariant $\Psi^{J,A}$}

We now generalize our morphism $t$:

\begin{prop}\label{prop_exist_t}
  We keep the notations of Section \ref{sec_basic_pfis}. There is a unique
  group morphism
  \[ t: \mathcal{P}_0(X) \to K^\times/G(\phi(X)) \]
  such that for all $i\neq j\in X$ we have $t(\{i,j\})=-\Trd_Q(z_iz_j)$.
\end{prop}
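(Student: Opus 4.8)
The plan is to construct $t$ on generators and then verify the relations are consistent. The group $\mathcal{P}_0(X)$ of even subsets of $X$ is generated by the elements $\{i,j\}$ for $i\neq j$, so any such morphism $t$ is unique once we prescribe $t(\{i,j\})=-\Trd_Q(z_iz_j) \bmod G(\phi(X))$; the content of the statement is that this prescription is well-defined, i.e.\ compatible with the $\mathbb{F}_2$-linear relations among the $\{i,j\}$. Concretely, $\mathcal{P}_0(X)$ has $\mathbb{F}_2$-basis $\{1,i\}$ for $i\in X\setminus\{1\}$ (fixing a basepoint $1\in X$), and the relations that must be checked are $\{i,j\}+\{j,k\}=\{i,k\}$ for all triples, together with $\{i,j\}+\{i,j\}=\emptyset$, since these generate all relations in the symmetric-difference group on even subsets once one knows the $\{i,j\}$ span.

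First I would fix $1\in X$ and \emph{define} $t$ on the basis by $t(\{1,i\}) = -\Trd_Q(z_1 z_i)$ for $i\neq 1$, extended $\mathbb{F}_2$-linearly; this automatically gives a group morphism $\mathcal{P}_0(X)\to K^\times/G(\phi(X))$ (using that the target is $2$-torsion-friendly in the sense that squares lie in $G(\phi(X))$, so $\mathbb{F}_2$-linearity makes sense — indeed $(-\Trd_Q(z_1z_i))^2\in (K^\times)^2\subset G(\phi(X))$). Then I must check that for a general pair $\{i,j\}$ with $i,j\neq 1$, the value forced by linearity, namely $t(\{1,i\})\cdot t(\{1,j\}) = \Trd_Q(z_1z_i)\Trd_Q(z_1z_j)$, agrees modulo $G(\phi(X))$ with the desired value $-\Trd_Q(z_iz_j)$. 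Equivalently, the relation to verify is
\[ \Trd_Q(z_iz_j)\cdot \Trd_Q(z_1z_i)\cdot \Trd_Q(z_1z_j) \equiv -1 \pmod{G(\phi(X))} \]
for all distinct $i,j,1$ (and the analogous triple relation for any three distinct indices). The heart of the argument is therefore a three-quaternion trace identity, to be proven modulo similarity factors of $\phi(X)$.

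The key input will be Proposition \ref{prop_phi_trace} (and Proposition \ref{prop_phi_trace}'s underlying computation of $h_i\cdot h_j$ from \cite{G1}): the bilinear form $(z,z')\mapsto \Trd_Q(zz_i\bar{z'}z_j)$ is isometric to $\fdiag{\Trd_Q(z_iz_j)}\phi(\{i,j\})$. Evaluating such forms, or composing the Morita-type multiplication $h_i\cdot h_j\cdot h_k$ in two ways, should yield that the product $\Trd_Q(z_iz_j)\Trd_Q(z_jz_k)\Trd_Q(z_kz_i)$ differs from $-1$ by an element represented by $\phi(\{i,j,k\})$, hence lies in $G(\phi(\{i,j,k\}))\subseteq G(\phi(X))$ since $\phi(\{i,j,k\})$ divides $\phi(X)$ (as $\{i,j,k\}\subset X$ and general Pfister forms are multiplicative, so a sub-Pfister-factor's similarity group contains — wait, it is the other way: larger Pfister forms have larger similarity groups, so $G(\phi(\{i,j,k\}))\subseteq G(\phi(X))$ indeed holds because $\phi(X)=\phi(\{i,j,k\})\cdot(\text{Pfister})$ up to the $n_Q$ correction, and $G$ only grows under multiplication by Pfister forms). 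Concretely one computes $\phi(\{i,j,k\})$ using that $z_i,z_j,z_k$ are pure quaternions: in $Q_0$ one has the identity $\Trd_Q(z_iz_j)\,z_k - \Trd_Q(z_jz_k)\,z_i + \ldots$ type relations, or more cleanly one uses that any three pure quaternions satisfy a linear dependence modulo the center when $Q$ has dimension $4$, giving the needed cocycle identity for the traces.

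The main obstacle I anticipate is the last triple-identity verification: making precise the claim that $\Trd_Q(z_iz_j)\Trd_Q(z_jz_k)\Trd_Q(z_kz_i)\equiv -1 \pmod{G(\phi(X))}$. The clean route is to introduce an auxiliary pure quaternion $z_0$ anticommuting with one of them (mirroring the trick in the proof of the proposition defining $\phi(I)$) to reduce $\phi(\{i,j,k\})$ to an explicitly computable $3$-fold Pfister form, or alternatively to invoke directly the associativity of the graded multiplication on $\tld{GW}^{-1}(Q,\gamma)$ from \cite{G1}: the two bracketings of $h_i\cdot h_j\cdot h_k$ must agree, and reading off the one-dimensional coefficient (the ``$t$''-part) of each bracketing gives exactly the relation $t(\{i,j\})t(\{i,k\})^{-1}t(\{j,k\})$ is a similarity factor of $\phi(X)$. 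I would present the proof via this associativity argument, as it is the most conceptual and avoids brute-force quaternion algebra; the routine bilinear-form bookkeeping (checking that the relevant product of traces is represented by the correct Pfister form, and that $G(\phi(\{i,j,k\}))\subseteq G(\phi(X))$) I would state as a short lemma and leave the computation terse.
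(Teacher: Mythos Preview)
Your overall structure is correct and matches the paper: uniqueness from the fact that the $\{i,j\}$ generate $\mathcal{P}_0(X)$, existence reduced to the single triple relation
\[
-\Trd_Q(z_iz_j)\,\Trd_Q(z_jz_k)\,\Trd_Q(z_kz_i)\in G(\phi(\{i,j,k\}))\subset G(\phi(X)).
\]
The difference lies in how that relation is verified.

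The paper does it by a direct quaternion computation. It proves the elementary identity (stated as Lemma~\ref{lem_magique})
\[
\Trd_Q(z_1z_2)\Trd_Q(z_1z_3)=z_1^2\,\Trd_Q(z_2z_3)+\Trd_Q(z_1z_2z_1z_3),
\]
multiplies through by $-\Trd_Q(z_2z_3)$, and recognizes the two resulting summands as values of $\fdiag{-z_1^2}\phi(\{2,3\})$ and $\phi(\{2,3\})$ respectively (the second via Proposition~\ref{prop_phi_trace} at $z=z'=z_1$). Their sum is $\pfis{z_1^2}\phi(\{2,3\})=\phi(\{1,2,3\})$, and one is done. This is short, explicit, and uses nothing beyond $\Trd_Q(ab)=ab+ba$ for pure quaternions.

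Your preferred route via associativity of the product in $\tld{GW}^{-1}(Q,\gamma)$ is conceptually appealing but, as written, has a gap: the triple product $h_ih_jh_k$ lands in the odd component $GW^{-1}(Q,\gamma)$, not in $GW(K)$, so there is no evident ``one-dimensional coefficient'' to read off from the two bracketings. To make the argument work you would have to compare an \emph{even} product, for instance $(h_ih_j)(h_ih_k)=h_i^2(h_jh_k)$ in $GW(K)$, and this in turn requires an explicit formula for $h_i^2=\fdiag{z_i}_\gamma^2$ together with the product rules for the $\phi(I)$ (Lemma~\ref{lem_prod_pi_phi}). That can be carried out, but it is more bookkeeping than you suggest, and in the end the identification of $h_i^2$ with a specific scaled Pfister form is itself a small quaternion computation of the same flavor as Lemma~\ref{lem_magique}. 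So the paper's direct approach is both shorter and more self-contained; your approach would work after filling in those missing computations, but is not the free lunch the proposal implies.
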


\begin{proof}
  The $\{i,j\}$ are generators of $\mathcal{P}_0(X)$ as a group (or
  equivalently as an $\mathbb{F}_2$-vector space), so there can only
  be one morphism satisfying those conditions.

  The only non-trivial thing to check is that if $i,j,k\in X$ are distinct
  then $t(\{i,j\})t(\{i,k\}) = t(\{j,k\})$,
  in other words that $T = -\Trd_Q(z_iz_j)\Trd_Q(z_iz_k)\Trd_Q(z_jz_k)$ is represented
  by the Pfister form $\phi(X)$. Since we may as well assume that 
  $X=\{i,j,k\}$, we may prove that $T$ is represented by $\phi(\{i,j,k\})$.

  Using Lemma \ref{lem_magique}, we see that
  \[ T = -(\Trd_Q(z_jz_k))^2\cdot z_i^2 + (-\Trd_Q(z_jz_k))\cdot \Trd_Q(z_iz_jz_iz_k). \]

  Now it follows from Proposition \ref{prop_phi_trace} (using $z_1=z_j$, $z_2=z_k$ and
  $z=z'=z_i$) that $-\Trd_Q(z_jz_k)\cdot \Trd_Q(z_iz_jz_iz_k)$ is represented by $\phi(\{j,k\})$.
  It is also clear that $-(\Trd_Q(z_jz_k))^2\cdot z_i^2$ is represented by $\fdiag{-z_i^2}$,
  and thus by $\fdiag{-z_i^2}\phi(\{j,k\})$.

  All in all, $T$ is represented by $\fdiag{-z_i^2}\phi(\{j,k\})\perp \phi(\{j,k\})$,
  which is $\pfis{z_i^2}\phi(\{j,k\})$, and Lemma \ref{lem_prod_pi_phi} shows that
  this form is $\phi(\{i,j,k\})$, so we are done.
\end{proof}

The following lemma is used in the proof of Proposition \ref{prop_exist_t}, but can also be
used in other contexts, so we record it here. It is a simple but powerful relation
satisfied by quaternions.

\begin{lem}\label{lem_magique}
  Let $z_1,z_2,z_3\in Q_0^\times$. Then
  \[ \Trd_Q(z_1z_2)\Trd_Q(z_1z_3) = z_1^2\Trd_Q(z_2z_3) + \Trd_Q(z_1z_2z_1z_3). \]
\end{lem}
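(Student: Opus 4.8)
The plan is to verify the identity $\Trd_Q(z_1z_2)\Trd_Q(z_1z_3) = z_1^2\Trd_Q(z_2z_3) + \Trd_Q(z_1z_2z_1z_3)$ by a direct computation in the quaternion algebra, reducing everything to the reduced trace and using the degree-$2$ relation satisfied by each pure quaternion. First I would recall that for $z\in Q_0$ (a pure quaternion), $\Trd_Q(z)=0$ and $z^2 = -\Nrd_Q(z)\in K$, so each $z_i$ satisfies $z_i^2\in K$; more generally for $z,w\in Q_0$ we have $zw + wz = \Trd_Q(zw)\in K$, since $zw+wz$ is the sum of a quaternion and its conjugate (note $\overline{z}=-z$ for pure quaternions, so $\overline{zw}=\overline{w}\,\overline{z}=wz$). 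This last relation $zw+wz=\Trd_Q(zw)$ is the key tool.

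The main step is then to manipulate $\Trd_Q(z_1z_2z_1z_3)$. I would write $z_1z_2 = \Trd_Q(z_1z_2) - z_2z_1$ using the relation above, and substitute:
\[ z_1z_2z_1z_3 = \bigl(\Trd_Q(z_1z_2) - z_2z_1\bigr)z_1z_3 = \Trd_Q(z_1z_2)\, z_1z_3 - z_2 z_1^2 z_3. \]
Since $z_1^2\in K$ is central, $z_2 z_1^2 z_3 = z_1^2 z_2 z_3$, so taking reduced traces (which are $K$-linear) gives
\[ \Trd_Q(z_1z_2z_1z_3) = \Trd_Q(z_1z_2)\Trd_Q(z_1z_3) - z_1^2\Trd_Q(z_2z_3). \]
Rearranging this is exactly the claimed identity. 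This is essentially the whole proof.

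The only subtlety — and the one place to be slightly careful — is the sign bookkeeping in $\overline{zw}=wz$ for pure quaternions and hence $zw+wz=\Trd_Q(zw)$ rather than $zw+wz=-\Trd_Q(zw)$ or similar; one should double-check against the convention $\overline{z}=\Trd_Q(z)-z$, which for $z\in Q_0$ gives $\overline{z}=-z$, so $\overline{zw}=\overline w\,\overline z = (-w)(-z)=wz$, and $\Trd_Q(zw) = zw + \overline{zw} = zw+wz$, as needed. I do not expect any real obstacle here: the identity is purely formal and holds in any quaternion algebra (indeed the hypothesis $z_i\in Q_0^\times$ is only needed so that $z_i^2\in K^\times$; the identity itself holds for arbitrary pure quaternions, invertible or not). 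One could alternatively prove it by a brute-force expansion in a basis $1,e_1,e_2,e_1e_2$, but the conceptual argument above is cleaner and avoids case analysis.
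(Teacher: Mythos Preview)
Your proof is correct and uses essentially the same approach as the paper: both rely on the key identity $zw+wz=\Trd_Q(zw)$ for pure quaternions $z,w$, and then perform a short algebraic manipulation. The paper expands $\Trd_Q(z_1z_2)\Trd_Q(z_1z_3)$ as a product of sums and regroups, whereas you substitute $z_1z_2=\Trd_Q(z_1z_2)-z_2z_1$ into $z_1z_2z_1z_3$ and take traces; these are minor rearrangements of the same computation.
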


\begin{proof}
  The basic idea is that if $z,z'\in Q_0^\times$, then $\Trd_Q(zz')=zz'+z'z$.
  Then:
  \begin{align*}
    \Trd_Q(z_1z_2)\Trd_Q(z_1z_3) &= \Trd_Q(z_1z_2)(z_1z_3+z_3z_1) \\
                                 &= \Trd_Q(z_1z_2)z_1z_3 + z_3z_1\Trd_Q(z_1z_2) \\
                                 &= (z_1z_2+z_2z_1)z_1z_3 + z_3z_1(z_1z_2+z_2z_1) \\
                                 &= z_1^2(z_2z_3+z_3z_2)+ (z_1z_2z_1z_3+z_3z_1z_1z_3)\\
                                 &= z_1^2\Trd_Q(z_2z_3) + \Trd_Q(z_1z_2z_1z_3). \qedhere
  \end{align*}
\end{proof}

\begin{rem}
  We get an analog of equation (\ref{eq_prod_q}): for any $I\in \mathcal{P}_0(X)$,
  \begin{equation}
    \prod_{i\in I}h_i = \fdiag{t(I)}\phi(I)
  \end{equation}
  where the product is taken in $\tld{GW}(Q,\gamma)$.
\end{rem}

\begin{ex}
  When $Q$ is split and $z_i$ corresponds through some Morita equivalence
  to $q_i=\fdiag{a_i,b_i}$, then $-\Trd_Q(z_iz_j)=a_ib_j+b_ia_j$. In the split
  case in Section \ref{sec_split} we had naturally taken as repesentative for $t(\{i,j\})$ 
  either $a_ia_j$, $a_ib_j$, $b_ia_j$ or $b_ib_j$, but this is another possibility, more 
  symmetrical: indeed, one can check by hand that $a_ia_j(a_ib_j+b_ia_j)$ is represented by
  $\pfis{-a_ib_i,-a_jb_j}$, so $a_ib_j+b_ia_j$ and $a_ia_j$ define the same class
  modulo $G(\phi(\{i,j\}))$.
\end{ex}

\begin{defi}\label{def_psi_herm}
  For any disjoint subsets $A,J\subset X$, we define 
  \[ \psi(J;A) = \pfis{t|W_{J,A}}\phi(A\cup J) \]
  where $W_{J,A}=(J+\mathcal{P}(A))\cap \mathcal{P}_0(X)$.
  This defines an invariant $\Psi_0^{J,A}\in IW_0((\Herm_2^{-1}(Q,\gamma))^X)$ by
  $\Psi_0^{J,A}((h_i)_{i\in X}) = \psi(J;A)$.
\end{defi}

The form $\psi(J;A)$ is well-defined since $W_{J,A}$ is an affine subset of 
$\mathcal{P}_0(A\cup J)$. It is clearly an invariant as everything is defined canonically. 
From everything we said this far we deduce:

\begin{prop}
  The invariant $\Psi_0^{J,A}\in IW_0((\Herm_2^{-1}(Q,\gamma))^X)$ given in Definition
  \ref{def_psi_herm} is an extension of $\Psi_0^{J,A}\in IW_0((\Quad_2)^X)$ defined in Definition
  \ref{def_psi_quad}, which also takes values in general $(2|A|+|J|-1)$-fold Pfister forms.
\end{prop}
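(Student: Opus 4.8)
The plan is to treat the two claims — that the hermitian $\Psi_0^{J,A}$ is an extension of the quadratic $\Psi_0^{J,A}$, and that it takes values in general $(2|A|+|J|-1)$-fold Pfister forms — separately, the second being essentially formal.

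For the Pfister-form statement: by Definition~\ref{def_psi_herm} we have $\Psi_0^{J,A}((h_i)_{i\in X})=\pfis{t|W_{J,A}}\phi(A\cup J)$, where $\phi(A\cup J)$ is an honest $|A\cup J|=(|A|+|J|)$-fold Pfister form by construction, and $W_{J,A}=(J+\mathcal{P}(A))\cap\mathcal{P}_0(X)$. When $A\neq\emptyset$ the functional $I\mapsto|I|\bmod 2$ is nonzero on $\mathcal{P}(A)$, so $W_{J,A}$ is a coset of $\mathcal{P}_0(A)$ inside $\mathcal{P}_0(A\cup J)$, of dimension $|A|-1$; hence by the proposition following Definition~\ref{def_pfis_2}, $\pfis{t|W_{J,A}}\phi(A\cup J)$ is a general $(2|A|+|J|-1)$-fold Pfister form (as $(|A|-1)+(|A|+|J|)=2|A|+|J|-1$). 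In the remaining case $A=\emptyset$ one has directly $\Psi_0^{J,\emptyset}((h_i))=\prod_{i\in J}h_i=\fdiag{t(J)}\phi(J)$ when $|J|$ is even and $\Psi_0^{J,\emptyset}=0$ when $|J|$ is odd, which are again (general) Pfister forms.

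For the extension statement, it suffices, by definition of an extension applied to the $X$-indexed variants of the functors of Section~\ref{sec_ext}, to show the following: for every field $E$ over which $Q$ splits, every $(h_i)_{i\in X}\in(\Herm_2^{-1}(Q_E,\gamma_E))^X$, and every hermitian Morita equivalence over $E$ sending $h_i=\fdiag{z_i}_\gamma$ to $q_i=\fdiag{a_i,b_i}\in\Quad_2(E)$, one has $\Psi_0^{J,A}((h_i))=\Psi_0^{J,A}((q_i))$ in $GW(E)$; since both invariants are even, the choice of Morita equivalence is immaterial. Let $\delta$ and $t^{\mathrm{quad}}$ be the quadratic data attached to $(q_i)$ as in Definition~\ref{def_psi_quad}, and $t^{\mathrm{herm}}$ the morphism of Proposition~\ref{prop_exist_t}. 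Since $Q_E$ is split, $n_{Q_E}$ is hyperbolic, so $\phi(I)=\pi(I)=\pfis{z_i^2}_{i\in I}$; the split-case example right after the definition of $\phi$ then gives $\phi(I)=\Psi^{\emptyset,I}((q_i))=\pfis{\delta|\mathcal{P}(I)}$, in particular $\phi(A\cup J)=\pfis{\delta|\mathcal{P}(A\cup J)}$. Likewise the split-case example following Proposition~\ref{prop_exist_t} gives $t^{\mathrm{herm}}(\{i,j\})=-\Trd_{Q_E}(z_iz_j)=a_ib_j+b_ia_j$, which is congruent to $t^{\mathrm{quad}}(\{i,j\})=a_ia_j$ modulo $G(\phi(\{i,j\}))=G(\pfis{-a_ib_i,-a_jb_j})$, hence modulo $G(\phi(A\cup J))$ as $\phi(\{i,j\})\mid\phi(A\cup J)$ for $i,j\in A\cup J$. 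As the $\{i,j\}$ generate $\mathcal{P}_0(X)$ and $t^{\mathrm{herm}}$, $t^{\mathrm{quad}}|_{\mathcal{P}_0(X)}$ are both group morphisms, they agree on $W_{J,A}$ modulo $G(\phi(A\cup J))$. Since $W_{J,A}$ is literally the affine subspace appearing in Definition~\ref{def_psi_quad}, we conclude
\[ \Psi_0^{J,A}((h_i)_{i\in X})=\pfis{t^{\mathrm{herm}}|W_{J,A}}\phi(A\cup J)=\pfis{t^{\mathrm{quad}}|W_{J,A};\,\delta|\mathcal{P}(A\cup J)}=\Psi_0^{J,A}((q_i)_{i\in X}). \]

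The only genuine content is in the two split-case identifications $z_i^2\equiv-a_ib_i$ and $-\Trd_{Q_E}(z_iz_j)\equiv a_ia_j\pmod{G(\pfis{-a_ib_i,-a_jb_j})}$ — that is, in making the hermitian Morita dictionary explicit — and these are already recorded as examples earlier in the paper; so the argument is mostly bookkeeping, the points requiring some care being to keep track of the correct square-class and similarity-factor quotients throughout and to dispose of the degenerate case $A=\emptyset$.
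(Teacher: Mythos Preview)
Your proof is correct and essentially spells out the details that the paper leaves implicit (the paper gives no proof, merely remarking that the statement follows ``from everything we said this far''): you use exactly the split-case identifications recorded in the Examples after the definition of $\phi(I)$ and after Proposition~\ref{prop_exist_t} to compare the two $\Psi_0^{J,A}$, and invoke the proposition following Definition~\ref{def_pfis_2} for the Pfister-form count. One small remark: in the edge case $A=\emptyset$ with $|J|$ even you obtain a general $|J|$-fold Pfister form rather than a $(|J|-1)$-fold one, so the fold count in the statement is off by one there; this is harmless (the form is still in $I^{|J|-1}$, and this degenerate case plays no role in the later applications), but you might flag it explicitly rather than glossing it with ``again (general) Pfister forms''.
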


\subsection{Extending cohomological invariants to quaternionic forms}\label{sec_stief_quat}

As we mentioned before equation (\ref{eq_sum_prd}), the $P_n^d$ are defined
in any pre-$\lambda$-ring, and always satisfy this equation. In particular
it holds in $\tld{GW}(Q,\gamma)$, and using the $\Z/2\Z$-grading we see
that for any anti-hermitian form $h\in \Herm_n^{-1}(Q,\gamma)$, the component
of $P_n^d(h)$ in $GW(K)$ is exactly $Q_n^d(h)$. Although only $Q_n^d$ defines
a Witt invariant, formula (\ref{eq_sum_prd}) makes $P_n^d$ a very convenient
intermediary.

By definition of $P_n^d$ and the $\lambda$-operations in $\tld{GW}(Q,\gamma)$,
we have:
\begin{align*}
  P_2^0(\fdiag{z_i}_\gamma) &= 1, \\
  P_2^1(\fdiag{z_i}_\gamma) &= 2-\fdiag{z_i}_\gamma, \\
  P_2^2(\fdiag{z_i}_\gamma) &= \pfis{z_i^2} - \fdiag{z_i}_\gamma. \\                              
\end{align*}

For any $\gamma: X\to \N$ we can also define $P_2^\gamma$ on families in 
$\Herm_{2|X|}^{-1}(Q,\gamma)$, and taking the even component defines $Q_2^\gamma$,
which is the natural extension of $Q_2^\gamma\in IW_0((\Quad_2)^X)$ to 
$IW_0((\Herm_2^{-1}(Q,\gamma))^X)$.

We first establish that the extended Witt invariant $Q_2^\gamma$ has almost the same 
combinatorial description as in Proposition \ref{prop_pq_m2} (with almost the same proof).

\begin{prop}\label{prop_q_gamma_quat}
  Let $d>2$ be even, with $d\ppq 2|X|$, and let $\gamma: X\to \{0,1,2\}$
  such that $|\gamma|=d$. We write $A=\gamma^{-1}(\{2\})$ and $B=\gamma^{-1}(\{1\})$.
  Then in $IW((\Herm_2^{-1}(Q,\gamma))^X)$ we have
  \[ Q_2^\gamma =  2^{|A\cup B|-2}n_Q + \sum_{J\subset B}2^{|B\setminus J|}\psi_0(J;A). \] 
\end{prop}

\begin{proof}
  Let us write $h=\sum_{i\in X}\fdiag{z_i}_\gamma$.
  So $Q_2^\gamma(h)$ is the component in $GW(K)$ of
  \[ P_2^\gamma((\fdiag{z_i}_\gamma)_{i\in X}) = \prod_{i\in A}(\pfis{z_i^2} - \fdiag{z_i}_\gamma)\cdot
    \prod_{i\in B}(2-\fdiag{z_i}_\gamma) \]
  When we develop this product, we
  must choose for each $i\in A\cup B$ whether we choose the term in $GW(K)$ or the term
  in $GW^{-1}(Q,\gamma)$. We can describe our choice by the subsets $I\subset A$ and $J\subset B$
  of the indices for which we chose the odd component. We must have $|I|+|J|$ even,
  so that the resulting product lands in $GW(K)$. This gives:
  \begin{align*}
    Q_2^\gamma((\fdiag{z_i}_\gamma)_{i\in X})&= \sum_{I\cup J\in \mathcal{P}_0(A\cup B)}
             2^{|B\setminus J|}\pi(A\setminus I) \fdiag{t(I\cup J)}\phi(I\cup J) \\
      &= 2^{|B|}\pi(A) + \sum_{I\cup J\in \mathcal{P}_0(A\cup B)\setminus \{\emptyset \}}
             2^{|B\setminus J|}\fdiag{t(I\cup J)}\phi(A\cup J) \\  
      &= 2^{|A\cup B|-2}n_Q + \sum_{I\cup J\in \mathcal{P}_0(A\cup B)}
             2^{|B\setminus J|}\fdiag{t(I\cup J)}\phi(A\cup J) \\  
      &= 2^{|A\cup B|-2}n_Q + \sum_{J\subset B}2^{|B\setminus J|}\left(\sum_{I\subset A,|I\cup J| \text{ even}}
                                      \fdiag{t(I\cup J)}\right)\phi(A\cup J) \\
      &= 2^{|A\cup B|-2}n_Q + \sum_{J\subset B}2^{|B\setminus J|}\psi_0(J;A). \qedhere
  \end{align*}
\end{proof}

The crucial difference between Propositions \ref{prop_pq_m2} and \ref{prop_q_gamma_quat}
is the constant term $2^{|A\cup B|-2}n_Q$. In particular, we see that $Q_2^\gamma$
does not take value in $I^d$ but only in $I^{d/2}$, but the obstruction is a known constant
that can be taken care of.

\begin{coro}
  Let $d>2$ be e.ven, with $d\ppq 2|X|$. The image of the extended invariant 
  $Q_{2|X|}^d\in IW(\Herm_{2|X|}^{-1}(Q,\gamma))$ in $IW((\Herm_2^{-1}(Q,\gamma))^X)$ is given by
  \[ \sum_{|\gamma|=d}Q_2^\gamma =  N(|X|,d)n_Q 
       + \sum_{J,A\subset X}\binom{|X|+|A|-k(J,A)}{|X|+|A|-d}2^{d-k(J,A)}\Psi^{J,A}  \]
  where $N(|X|,d)\in \N$ is divisible by $2^{d/2-2}$ and $k(J,A)=2|A|-|J|$,
  and the sum is over all $J,A\subset X$ disjoint with $2|A|\ppq d\ppq |X|+|A|$ and
  $k(J,A)\ppq d$.
\end{coro}

\begin{proof}
  The fact that $P_{2|X|}^d(\sum_{i\in X} h_i)=\sum_{|\gamma|=d}P_2^\gamma((h_i)_{i\in X})$
  is proved just as in \ref{}, and the corresponding formula for $Q_{2|X|}^d$ follows
  by taking the even part.

  Then, using Proposition \ref{prop_q_gamma_quat}, we just have to count how many times 
  $n_Q$ and each $\psi(J;A)$ appears when
  we sum over all $\gamma:X\to \{0,1,2\}$ with $|\gamma|=d$. Note that $|\gamma|=2|A|+|B|$.
  Of course the data of $\gamma$ is equivalent to the data of $A$ and $B$. So as for $n_Q$,
  we see that it appears
  \[ \sum_{A\subset X}\binom{|X|-|A|}{d-2|A|}2^{d-|A|-2} \]
  times, where the sum is over subsets $A$ such that $2|A|\ppq d\ppq |X|+|A|$.
  We call that number $N(|X|,d)$, and we see that all $2$-powers appearing in the
  sum are with exponent $d-|A|-2\pgq d/2-2$.

  For each pair $(J,A)$ of disjoint subsets of $X$, $\psi^{J,A}$ appears with factor
  $2^{|B\setminus J|}=2^{d-k(J,A)}$ for each overset $B$ of $J$ which is disjoint with
  $A$ and with $2|A|+|B|=d$. For this to exist we must have $2|A|\ppq d\ppq |X|+|A|$
  and $k(J,A)\ppq d$, and in that case there are $\binom{|X|+|A|-k(J,A)}{|X|+|A|-d}$
  possibilities for $B$ (by choosing $B\setminus J$ in $X\setminus (A\cup J)$ with
  cardinal $d-k(J,A)$).
\end{proof}

Again, we have a constant obstruction, namely $N(|X|,d)n_Q$ (which of course disappears
over any splitting field as $n_Q$ becomes hyperbolic), to  $Q_{2|X|}^d$ taking values
in $I^d$. We finally get our extension result for cohomological invariants:

\begin{thm}\label{thm_inv}
  Every even invariant $\alpha\in IC_0(\Quad_{2n})$ extends to a liftable invariant
  $\hat{\alpha}\in IC_0(\Herm_{2n}^{-1}(Q,\gamma))$.
\end{thm}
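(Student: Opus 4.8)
The plan is to reduce the statement to the combinatorial computations carried out in Proposition~\ref{prop_qnd_quat} by going through the framework of Section~\ref{sec_filt}. First I would fix $(A,\sigma)$ of index dividing $2$ (the split case being trivial), so that $\Herm_{2n}^\eps(A,\sigma)$ is Morita-equivalent to $\Herm_{2n}^{-1}(Q,\gamma)$ for a quaternion algebra $Q$, and we may assume $A=Q$, $\sigma=\gamma$, $\eps=-1$. By Proposition~\ref{prop_iw_sim} the module $IW(\Quad_{2n}/\sim)$ has basis the $Q_{2n}^d$ with $d$ even, and by Remark~\ref{rem_lift} together with Proposition~\ref{prop_deg_even_inv} every even cohomological invariant $\alpha\in IC_0(\Quad_{2n})$ is of the form $\tld\beta$ for some liftable $\beta$, hence is the class of an even Witt invariant; concretely $\alpha$ is a combination of the classes of the $Q_{2n}^d$ in $IC^{d-1}$. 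So it suffices to extend each such $Q_{2n}^d$ (as a Witt invariant, via the $\lambda$-operations on $\tld{GW}^{-1}(Q,\gamma)$ as in Section~\ref{sec_ext}) and then show its class in $IC^{d-1}(\Herm_{2n}^{-1}(Q,\gamma)/\sim)$ still extends the class of $Q_{2n}^d$.

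The difficulty, as the introduction warns, is the constant obstruction: the extended invariant $Q_n^d\in IW(\Herm_{2n}^{-1}(Q,\gamma))$ need not lie in $IW^{\pgq d-1}$ but only in $IW^{\pgq d/2-1}$ or so, because of the constant term $N(|X|,d)n_Q$ appearing in Proposition~\ref{prop_qnd_quat}. This is exactly the situation the hidden-degree formalism of Section~\ref{sec_filt} is designed for. The key step is therefore to compute $\hid(Q_n^d)$ and show it equals $d-1$, i.e. to produce a constant $u\in W(K)$ (here a suitable multiple of $n_Q$) such that $Q_n^d - u$ lies in $IW^{\pgq d-1}(\Herm_{2n}^{-1}(Q,\gamma))$. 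From Proposition~\ref{prop_qnd_quat}, after restricting to $(\Herm_2^{-1}(Q,\gamma))^X$ with $|X|=n$, the value is $N(n,d)n_Q$ plus a sum of terms $\Psi^{J,A}$ which take values in general $(2|A|+|J|-1)$-fold Pfister forms; since each such term with $k(J,A)<d-1$ is hit only with a $2$-power coefficient, and $2\cdot(\text{$k$-fold Pfister}) \in I^{k+1}$, one checks that modulo $I^{d-1}$ only the constant $N(n,d)n_Q$ survives. Because $n_Q\in I^2$ and $2^{d/2-2}\mid N(n,d)$, we get $N(n,d)n_Q\in I^{d-2}$; the point is to show that in fact, after subtracting this explicit constant, the remainder lands in $I^{d-1}$ over every $L/K$ and every family of forms, which reduces to the numerical divisibility assertions already established in the proof of Proposition~\ref{prop_qnd_quat}.

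Having pinned down $\hid(Q_n^d)=d-1$, the next step is to identify $[Q_n^d]\in \hat N_{d-1}$, where $N=IW(\Herm_{2n}^{-1}(Q,\gamma)/\sim)$: since over a splitting extension $L$ the invariant $Q_n^d$ specializes to the split $Q_{2n}^d$, whose class in $IC^{d-1}$ is the restriction of an explicit combination of Stiefel--Whitney-type invariants $w_\gamma$, the extended class agrees with $w_{d-1}$-combinations after base change to any splitting field. This is precisely the statement that the cohomological invariant obtained from $Q_n^d - N(n,d)n_Q$ is an extension of the corresponding even invariant of $\Quad_{2n}/\sim$. I would then assemble the general $\alpha$ as a $h^*(K)$-combination of these pieces: write $\alpha = \sum c_d\cdot(\text{class of }Q_{2n}^d)$ with $c_d\in h^*(K)$ of the appropriate degree, and set $\hat\alpha = \sum c_d\cdot(\text{class of }(Q_n^d - N(n,d)n_Q))$. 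Because each summand extends the corresponding summand of $\alpha$ and extension is additive and $h^*(K)$-linear, $\hat\alpha$ extends $\alpha$. The main obstacle is genuinely the middle step --- controlling the constant term $N(n,d)n_Q$ and verifying that subtracting it lifts the visible degree all the way up to $d-1$ rather than some intermediate value; everything else is bookkeeping with the bases from Section~2 and the formalism from Section~1.
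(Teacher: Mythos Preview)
Your proposal is correct and follows essentially the same route as the paper: lift $\alpha$ to an even Witt invariant, expand in the basis $Q_{2n}^d$, extend each $Q_{2n}^d$ via the $\lambda$-operations on $\tld{GW}^{-1}(Q,\gamma)$, and then subtract the explicit constant $N(n,d)n_Q$ coming from Proposition~\ref{prop_qnd_quat} so that the remaining sum of $2^{d-k(J,A)}\Psi_0^{J,A}$ terms lands in $I^{d-1}$ (hence, after multiplying by the coefficients, in $I^m$). Two small remarks: the paper does this directly at the Witt level (writing $\beta=\sum a_d Q_n^d$ with $a_d\in I^{m-d+1}(K)$ and setting $\hat\beta=\beta-\sum a_d N(n,d)n_Q$) rather than invoking the hidden-degree formalism, and your parenthetical $N(n,d)n_Q\in I^{d-2}$ should read $I^{d/2}$ since $2^{d/2-2}n_Q\in I^{d/2}$, though this does not affect the argument.
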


\begin{proof}
  We may assume that $\alpha\in IC_0^m(\Quad_{2n})$ for some $m\in \N$.
  According to Proposition \ref{prop_iw_sim}, $\alpha$ is liftable to
  some $\beta\in IW_0^{\pgq m}(\Quad_{2n})$.
  Then write $\beta=\sum_d a_d Q_n^d$ with $a_d\in I^{m-d+1}(K)$. We can
  extend naturally $\beta$ to hermitian forms, and then set
  \[ \hat{\beta} = \beta - \sum_d a_d N(n,d)n_Q \]
  which according to Proposition \ref{prop_qnd_quat} takes values in $I^m$ (indeed,
  it is a combination of invariants $a_d 2^{d-k(J,A)}\Psi_0^{J,A}$ and $\Psi_0^{J,A}$
    takes values in general $(k(J,A)-1)$-fold Pfister forms). Since
  $n_Q=0\in W(K)$ when $Q$ is split, the class of $\hat{\beta}$ in
  $IC_0(\Herm_{2n}^{-1}(Q,\gamma))$ extends $\alpha$.
\end{proof}

\bibliographystyle{plain}
\bibliography{stiefel_whitney_quaternions}

\begin{thebibliography}{1}

\bibitem{Ber}
Grégory Berhuy.
\newblock Cohomological invariants of quaternionic skew-hermitian forms.
\newblock {\em Archiv der Mathematik}, 88(5):434--447, 2007.

\bibitem{GMS}
Skip Garibaldi, Alexander Merkurjev, and Jean-Pierre Serre.
\newblock {\em Cohomological Invariants in Galois Cohomology}.
\newblock AMS, 2003.

\bibitem{G3}
Nicolas Garrel.
\newblock Witt and cohomological invariants of witt classes.
\newblock {\em Annals of K-Theory}, 5(2):213--248, 2020.

\bibitem{G2}
Nicolas Garrel.
\newblock Lambda-operations for hermitian forms over algebras with involution of the first kind.
\newblock https://arxiv.org/abs/2304.02617, 2022.

\bibitem{G1}
Nicolas Garrel.
\newblock Mixed witt rings of algebras with involution.
\newblock {\em Canadian Journal of Mathematics}, 75(2):608–644, 2023.

\bibitem{Kah}
Bruno Kahn.
\newblock La conjecture de milnor (d’apres v. voevodsky).
\newblock {\em Ast{\'e}risque}, 245:379--418, 1997.

\bibitem{BOI}
Max-Albert Knus, Alexander Merkurjev, Markus Rost, and Jean-Pierre Tignol.
\newblock {\em The {Book} of {Involutions}}.
\newblock American Mathematical Soc., 1998.

\bibitem{Mer}
Alexander Merkurjev.
\newblock Degree three cohomological invariants of semisimple groups.
\newblock {\em Journal of the European Mathematical Society}, 18(3):657--680, 2016.

\end{thebibliography}

\end{document}